\theoremstyle{plain}
\renewcommand{\marginpar}[1] {  }
\renewcommand{\comment}[1] {  }
\newtheorem{theo}{Theorem}[section]
\newtheorem*{theo-nn}{Theorem}
\newtheorem{lem}[theo]{Lemma}
\newtheorem{prop}[theo]{Proposition}
\newtheorem{defi}[theo]{Definition}
\theoremstyle{remark}
\newtheorem{rem}[theo]{Remark}
\numberwithin{equation}{section}
\def \a{\mathfrak{a}}
\def\g{\mathfrak{g}}
\def\i{\mathfrak{i}}
\def\s{\mathfrak{s}}
\def\c{\mathfrak{c}}
\def\CC{\mathfrak{C}}
\def\ep{\varepsilon}
\def\vid{\emptyset}
\def\cA{{\mathcal A}}
\def\cC{{\mathcal C}}
\def\cE{{\mathcal E}}
\def\cF{{\mathcal F}}
\def\cH{{\mathcal H}}
\def\cL{{\mathcal L}}
\def\cP{{\mathcal P}}
\def\cS{{\mathcal S}}
\def\cW{{\mathcal W}}
\DeclareFontFamily{OT1}{rsfs}{}
\DeclareFontShape{OT1}{rsfs}{n}{it}{<-> rsfs10}{}
\DeclareMathAlphabet{\mathscr}{OT1}{rsfs}{n}{it}
\def\A{\mathbb A} 
\def\C{\mathbb C}
\def\Q{\mathbb Q}
\def\R{\mathbb R}
\def\N{\mathbb N}
\def\Q{\mathbb Q}
\def\R{\mathbb R}
\def\gg{ _{[G]}}
\def\ggp{ {{[G]_P}}}
\def\-{\setminus }
 \def\beq{\begin{equation}}
\def\eeq{\end{equation}}
\newenvironment{res}
               {\begin{equation}\begin{minipage}{0.85\textwidth}}
               {\end{minipage}\end{equation}}
\def\ber{\begin{res}}
\def\eer{\end{res}}
\newenvironment{res-nn}
               {$$\begin{minipage}{0.85\textwidth}}
               {\end{minipage}$$}
\def\qed{{\null\hfill\ \raise3pt\hbox{\framebox[0.1in]{}}\break\null}}
\newenvironment{dedication}
{           % we want a new page
\thispagestyle{empty}% no header and footer
%\vspace*{\stretch{1}}% some space at the top 
\itshape             % the text is in italics
\raggedleft          % flush to the right margin
}
{\par % end the paragraph
   %\vspace{\stretch{3}} % space at bottom is three times that at the top
              % finish off the page
}
\author{Patrick {Delorme}}  
\title{On the Spectral  Theorem of Langlands}
\begin{document}

\maketitle  

\begin{dedication}
\`A Chantal
\end{dedication} 
\begin{abstract}
We show that the Hilbert subspace  of $L^2(G(F)\backslash G(\A))$ generated by wave packets of Eisenstein series built from discrete series is the whole space.
Together with the work of Lapid \cite{L1}, it achieves a proof of the spectral theorem of R.P. Langlands  based on the work of J. Bernstein and E. Lapid \cite{BL} on the meromorphic continuation of Eisenstein series from 
% I have to say that I was not able to complete the  proof of  the  earlier version of this manuscript,  using usual truncation as in \cite{L1}. 
I use truncation on compact sets as J. Arthur did for the local trace formula in \cite{Alt}.
\end{abstract}

\section{Introduction}

We denote by $\overline{\cE}$ the isometry introduced by E. Lapid in \cite{L1}, Theorem 2, whose proof involves the meromorphic continuation of of Eisenstein series built 
from discrete data. 
A  dense subspace of its image is generated  by  wave packets of these Eisenstein series. We show that this  image is equal to $L^2(G(F)\backslash G(\A))$. 
This is what Lapid  calls the second half of the proof of the spectral theorem of Langlands.\\ 
It achieves a new proof of the spectral theorem of R.P. Langlands (\cite{Lan}, \cite{MW}). 
Notice that the proof of Langlands describes the spectrum as residues of Eisenstein series built from cuspidal data.\\ 
One uses the notion of temperedness of automorphic forms introduced  by J. Franke \cite{F}(cf. also \cite{W1} section 4.4). 
We show this notion of temperedness is weaker than the notion  of temperedness introduced by Joseph Bernstein in \cite{B}.\\
We prove that for bounded sets of  unitary parameters, the Eisenstein series are uniformly tempered when the parameter is unitary and bounded.  
One uses for this  that the growth of an automorphic form is controlled by the exponents of its constant terms and that the constant terms 
of Eisenstein series are explicited. \\
%     The notion of weak constant term of tempered automorphic forms is introduced and heavily used. \\The starting point is   that these Eisenstein series are analytic in the imaginary parameter   of uniform moderate growth for bounded parameters.  This is due to Langlands \cite{Lan}. A short proof of this   has been given by Bernstein and Lapid ( \cite{BL} and \cite{L1}). Then using the knowledge of the exponents of Eisenstein series along any standard parabolic subgroup (cf. \cite{L1}), one improves  this estimate step by step using maximal parabolic subgroups: this technique was used by van den Ban (\cite{vdB})  in  a similar situation for symmetric spaces. It originates from Wallach (cf. \cite{W}).  Then  a decisive uniform bound for exponential polynomials of one variable  in terms of its exponents, due to Lapid (\cite{L1}, Lemma 4),   is used.  \\
Some wave packets are in the Harish-Chandra Schwartz space (cf. \cite{L2}): 
this  is due to J. Franke \cite{F}, section 5.3,  Proposition 2 (2) but his proof  rests on the work of Langlands. 
We give here a selfcontained proof (cf. Proposition \ref{wps}) which uses  a uniform bound for exponential polynomials of one variable in terms of its exponents (cf. \cite{L1}) 
together with the general scheme of Harish-Chandra's study of wave packets in the Schwartz space in the real case (cf.\cite{HC},  see also \cite{BCD}).\\
Then, one shows that if  the image of $\overline{\cE}$ is not the full space, there would exist a tempered automorphic form orthogonal to these wave packets: the proof  is  similar to what we did  for real  symmetric spaces (cf. \cite{CD}, \cite{D2}). \\ 
One can compute an explicit  asymptotic formula for the truncated inner product of this form with an Eisenstein series. 
Actually, here,  we use ''true truncation'' i.e. truncation on compact sets as in \cite{Alt}. 
This uses partitions of $G(F)\backslash G(\A)$ depending on the truncation parameter (cf. \cite{AtrI}).\\
By a process of limit, as in \cite{D2}, one computes the scalar product of this form with a wave packet of Eisenstein series.  
Then one shows that it implies that this form is zero. A contradiction which shows that $\overline{\cE}$  is onto.
%I have to say that I was not able to complete the  proof of  an earlier version, which appeared on arXiv,  using usual truncation as in \cite{L1}.  \\
 
\section*{ Acknowledgment} I  thank more than warmly  Raphael Beuzart-Plessis  for his constant help. 
I thank also warmly Jean-Loup Waldspurger  for numerous comments  on a previous version of this work. 
I thank also warmly Pascale Harinck for her careful reading and suggestions to improve the writing.

\section{Notation} 
%Notation will be as in test of which we ar going to use  also several results. from section 1.1 to 1.7. 
%section{ $A_{temp}([G])$}
We introduce the notation  for functions $f$, $g$ defined on  a set $X$ with values in $\R^+$:
$$f (x) << g(x), x \in X$$ if there exists $C >0$ such that $f(x )<C g(x) , x\in X$.
We will denote this also $f<< g$.\\ 
Let us denote 
$$ f (x) \sim g(x), x \in X$$
if and only iff $ f<<g$ and $g<< f$.\\
If moreover $f$ and $g$ take values greater or equal to 1, we write:
$$f(x) \approx g(x)$$  if there exists $N>0$ such  that
$$g(x)^{1/N} << f(x) << g(x)^N, x \in X.$$
Let $F$ be a number field and $\A$ its adele ring. 
If  $G$ is   an algebraic group defined over $F$, we denote its unipotent radical by $N_G$ . 
Then the real vector space $\a_G$ is defined as usual, as well as the canonical morphism $H_G: G(\A)\to \a_G$. Let $G(\A)^1$ be its kernel.\\
From now on we assume that $G$ is  reductive. 
Let $P_0$ be a parabolic subgroup of $G$ defined over $F$ and minimal for this property. 
Let $M_0$  be a Levi subgroup of $P_0$.
We have the notion of standard  and  semi-standard parabolic subgroup  of $G$. 
Let $K$ be a good maximal compact subgroup of $G(\A) $ in good position with respect to $M_0$. \\
If $P$ is  a semistandard parabolic subgroup of $G$ we extend the map $H_P$ to a map
$$H_P: G(\A)\to \a_P$$
in such a way that $H_P(pk)= H_P(p)$ for $p\in P(\A), k \in K$.\\
We have a  Levi decomposition $P=M_PN_P$.
Let $A_P$  be the  maximal split torus of the center of  $M_P$ and $A_0=A_{M_0}$.\\
Let $G_\Q$ be the restriction of scalar  from $F$ to $\Q$  of $G$.  
We denote by  $A_P(\R)$  the group of real points of the maximal split torus  of the center of $M_{P, \Q}$ and $A_P^{\infty}$ the neutral component of this real Lie group. 
The map $H_P$ induces  an isomorphism between the neutral component $A_P^\infty$ to $\a_P$. 
The inverse map will be denoted $exp$ or $exp_{P}$.\\ 
We define :
$$[G]_P = M_P(F) N_P(\A) \backslash  G( \A), [G]=[G]_G.$$
The map $H_P$ goes down to a map $[G]_P \to \a_P$.\\
The inverse image of $0$ by this map is denoted $[G]_P^1$ and one has $[G]_P=  [G]_P^1A^\infty_P $.\\
If  $P\subset Q$ are semistandard parabolic subgroups of $G$,  we have  the usual decomposition 
$$ \a_P= \a^Q_P \oplus \a_Q. $$
This allows to view elements of $\a_Q^*$ as linear forms on  $\a_P$ which are zero on $\a_P^Q$.\\
The adjoint action of $M_P$ on the Lie algebra of $M_Q\cap N_P$ determines  $\rho_P^Q \in  \a_P^{Q,*} $. 
If $Q= G$ we omit $Q$ of the notation and we denote $\rho$ for $ \rho_{P_0}$.\\ 
If $P$ is a standard parabolic subgroup of $G$, let $\Delta_0^P $ be the set of simple roots of $A_0$ in $M_P\cap P_0$ and $\Delta_P\subset \a_P^*$  the set of restriction to $\a_P$ of the elements of $\Delta_{P_0} \- \Delta_0^P$. 
If $Q\subset P$, one defines also $\Delta_P^Q$ as the set  of restrictions to $\a_P$ of elements of $\Delta_{0}^Q \-\Delta_0^P$. 
One has  also the set of simple  coroots $\check{\Delta}_P^{Q}\subset \a_P^Q$. 
By duality we get simple weights ${\hat \Delta}^Q_P $ denoted  $\varpi_\alpha, \alpha \in \Delta^Q_P$. 
If $Q= G$ we omit $Q$ of the notation.\\
We denote by $\a_0^+$ the closed Weyl chamber and $\a_P^+ $ (resp. $\a_P^{++} $) the set of $X \in \a_P$  such that $\alpha(X) \geq 0$ (resp. $>0$ ) for $\alpha \in \Delta_P$.\\
If $P$ is a standard parabolic subgroup of $G$, we say that $\nu \in \a_{P, \C}^* $ is subunitary (resp. strictly subunitary) if $Re \nu (X) \leq 0, X \in \a_P^+ $ 
(resp. if $Re \nu = \sum_{ \alpha \in \Delta_P} x_\alpha  \alpha $ where  $x_\alpha< 0$ for all  $\alpha$). 
If  $\nu \in \a^*_P$,  it  is viewed  as a linear form on $\a_0$  which is zero on  $\a_0^P$. Then  $\nu\in  \a_{P, \C}^*  $ is subunitary if and only if  one has: 
\beq \label{subunitary} Re \nu (X) \leq 0, X \in \a_0^+ \eeq 
This follows from the fact $\alpha \in \Delta_P$ is proportional  to $\sum_{\beta \in \Sigma^+_0, \beta_{ \vert  \a_P} = \alpha}\beta$ where $\Sigma^+_0$ is the 
set of positive roots. 
In fact this sum is invariant by the Weyl group  generated by the reflections around roots which are $0$ on $\a_P $.\\
Let $W$ be the Weyl group of $(G,A_0)$. 
If $P, Q$ are standard parabolic subgroups of $G$,  let $W(Q\backslash G/P) $ be a set of representatives of $Q\backslash G/P$ in $W$ of minimal length. 
If $s \in  W(Q\backslash G/P) $  the subgroup $M_P \cap s^{-1} M_Qw$  is the Levi subgroup of a parabolic subgroup $P_s$ contained in $P$  and $M_Q \cap sM_Ps^{ -1}$ is the Levi subgroup of a parabolic subgroup  $Q_w$ contained in $Q$. 
Let $W(P, Q)$ be the set of $w \in W(Q\backslash G/P) $ such that $w M_P w^{-1} \subset M_Q$. 
Let $W(P\vert Q)$ be the set of $w \in W(Q\backslash G/P)$ such that $w(M_P)= M_Q$. 
Hence: 
$$W (P\vert Q)= W(P,Q) \cap W(Q, P)^{-1} .$$ 
By a Siegel domain $\s_P$ for $[G]_P$, we mean a subset of $G(\A)$ of the form: 
\beq\label{defsiegel} \s_P = \Omega_0\{ exp X\vert X \in \a_0 , \alpha (X+T)\geq 0, \alpha \in \Delta_0^{P}\}K \eeq 
where $\Omega_0$ is a compact of $P_0(\A)^1$, $T \in \a_0$, such that $G(\A)= M_P(F) N_P( \A) \s_P$. 
Let 
$$\a_0^{+,P} := \{ X \in \a_0\vert \alpha (X) \geq 0,  \alpha \in \Delta_0^P\}.$$ 
Let us show  \ber \label{siegel} Any Siegel set $\s_P $ is contained in $\Omega_{N_P}  \{exp X\vert X  \in \a_0^{+,P}\}  \Omega $  where  $\Omega$  is a  compact  subset of  $G(\A)$ and $\Omega_{N_P}$ is a compact subset of $N_P( \A)$. \eer  
%Reciprocally  AA  if $\Omega$  is a compact of $G(\A)$, $\{\exp X\vert ⁄ \in \a_0^+\}  \Omega $  is contained in some Siegel set.$\s_G$.  \eer 
There is a compact subset, in fact reduced to a single element, $exp T$, $\Omega_1\subset A_0^\infty $ such  that 
$$\{ exp X\vert X \in \a_0 , \alpha (X+T)\geq 0, \alpha \in \Delta_0^{P}\} \subset \{\exp X\vert  X \in \a_0^{+,P} \}\Omega_1. $$  
The compact set $\Omega_0$ is a subset of $ \Omega_{ N_P} \Omega_{ P_0 \cap M_P}$, where $ \Omega_{ N_P} $(resp. $ \Omega_{ P_0 \cap M_P}$) is a  compact   subset  of  $N_P( \A)$ (resp. $(P_0 \cap M_P) ( \A)$). 
Then the conjugate by $exp-X $, $X\in \a_0^{+,P} $ of $\Omega_{P_0 \cap M_P} $ remains  in a compact set, $ \Omega_2$,  when $X$ varies in $ \a_0^{+,P}$. 
The compact subset $\Omega= \Omega_2\Omega_1K$ satisfies the required property.\\ 
Let $P $ be a standard parabolic subgroup of $G$. 
If $f$ is a function on $G(\A) $ with values in $\R^+$, we denote $f_{[G]_P} $ the function on $[G]_P$ defined by 
\beq f_{[G]_P}(g)=  inf_{\gamma \in M_P(F) N_P( \A)} f (\gamma g), g \in G(\A).  \eeq 
We fix  a height  $\Vert. \Vert $ on $G ( \A)$ (cf.  \cite{MW}, I.2.2). 
From \cite{BP}, Proposition A.1.1 (viii), one has 
\beq \label{norm} \Vert g \Vert \sim \Vert g \Vert_{[G]} , g \in \s_G\eeq 
Let us define:
$$\sigma( g)= 1 + log (\Vert g \Vert), g \in G(\A).$$
We have $$\Vert mk\Vert_{[G]_P}\approx \Vert m\Vert_{[M]}.$$
From  this and (\ref{norm}), one deduces: 
\beq \label{sisih} \sigma_\ggp  (g) \sim  \sigma (g)\sim  1+ \Vert H_0(g) \Vert , g \in \s_P. \eeq  
Let $g \in \s_P$.   From  (\ref{siegel}), one can write it $ g=\omega_0 exp X \omega$ with $\omega_0 \in \Omega_{N_P}, \omega \in \Omega, X \in \a_0^{+,P}$. 
Then taking into account 
$$\Vert g \Vert \approx \Vert exp X\Vert $$ 
one has $$\sigma( g) \approx \sigma (expX)$$ and from (\ref{sisih}):
$$1+ \Vert H_0(g)\Vert  \sim 1+\Vert X \Vert, g \in  \s_P$$
   % For $\Omega, \Omega' \subset G(A)$ compact, one has: 
 %   $$ \Vert \omega' g \omega \Vert \sim  \Vert  g \Vert,\omega\in \Omega, \omega'  \in \Omega'.$$
  %   Using this, by taking logarithm  one gets and [MW],  I.2.2 (iv) :
From this it follows:  
\beq \label{ssg}   \sigma_{[G]_P} (g)\sim  1+ \Vert H_0(g) \Vert \sim \sigma( g) \sim  1+\Vert X\Vert  , g \in \s_P.\eeq
We normalize the measures as in \cite{MW}, I.1.13. \\
If $X$ is a topological space, let $C(X)$ be the space of complex valued continuous functions on $X$. 
Let $ \Omega$ be a compact subset of $G[\A]$ and $[\Omega ]$ its image in $[G]$. 
Then, as $\Omega$ can be covered by a finite number of open sets on which the projection to $[G]$ is injective, one has:
\beq\label{section} \int_{[\Omega ] }f(  x) d x   <<   \int_{\Omega } f(G(F) g) dg, f \in C( [\Omega]), f \geq 0.\eeq 
Let  $B$ be a symmetric bounded neighborhood of $1$ in $G(\A)$ (a ball) and let $\Xi^\ggp(x)= (vol_\ggp xB)^{-1/2} $. 
\ber \label{equb}The equivalence class of the function does not depend of the choice of $B$.\eer
We have (cf. \cite{BCZ} section 2.4):
\begin{lem} \label{xiap}  
$$\Xi^\ggp(g) \sim  e^{\rho(H_0(g))}, g \in \s_P.$$
\end{lem}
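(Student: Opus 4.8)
The plan is to reduce the estimate on the Siegel set $\s_P$ to a computation in the Levi $M_P$ and then, iterating, to the minimal parabolic $P_0$, where $\Xi^{[G]}$ is the classical Harish-Chandra spherical-type function whose asymptotics are given by $e^{\rho(H_0(\cdot))}$. First I would unwind the definition: since $\Xi^{\ggp}(x)=(\mathrm{vol}_{\ggp}(xB))^{-1/2}$ and, by \eqref{equb}, the equivalence class is independent of the ball $B$, I may choose $B$ conveniently (e.g. a product over the archimedean and non-archimedean places, with a factorizable shape). The volume $\mathrm{vol}_{\ggp}(xB)$ is the volume in $[G]_P=M_P(F)N_P(\A)\backslash G(\A)$ of the image of $xB$; using a section of the projection $G(\A)\to[G]_P$ over a neighbourhood — as in \eqref{section} — this volume is comparable to $\int_{B}\mathbf 1_{M_P(F)N_P(\A)xb\,\cap\,(\text{fund. domain})\neq\vid}\,db$, i.e. it counts, with the correct measure, how much of $xB$ survives modulo $M_P(F)N_P(\A)$.

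Next I would use the Siegel description \eqref{siegel}: for $g\in\s_P$ write $g=\omega_0\,\exp X\,\omega$ with $\omega_0\in\Omega_{N_P}$, $\omega\in\Omega$, $X\in\a_0^{+,P}$. Right translation by the fixed compact $\omega$ changes $\mathrm{vol}_{\ggp}(gB)$ only by bounded multiplicative constants (replace $B$ by $\omega B\omega^{-1}$, still a ball, and invoke \eqref{equb}); likewise the left factor $\omega_0\in N_P(\A)$ is killed in $[G]_P$. So up to $\sim$ we are reduced to estimating $\mathrm{vol}_{\ggp}(\exp X\cdot B)$ for $X\in\a_0^{+,P}$. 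Decomposing $\a_0=\a_0^P\oplus\a_P$ and using the Iwasawa-type decomposition $G(\A)=N_P(\A)M_P(\A)K$ together with the modular character $e^{2\rho_P}$ of $P$, the $N_P$-direction contracts the ball $B$ by a factor $e^{-2\rho_P(H_P(\exp X))}$ worth of volume on the one hand, while on the other hand the map to $[G]_P$ only remembers the $M_P$-part; the net effect, after taking the $-1/2$ power, is
\beq
\Xi^{\ggp}(\exp X)\sim e^{\rho_P(H_P(X))}\,\Xi^{[M_P]}(\exp X^P),
\eeq
where $X^P$ is the $\a_0^P$-component of $X$ and $\Xi^{[M_P]}$ is the analogous function for $M_P$. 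This is essentially the content of \cite{BCZ} section 2.4, which is cited; I would quote their normalization of measures (matching \cite{MW}, I.1.13) so the comparison constants are uniform on $\s_P$.

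Finally, iterating this down to $P_0$ — or invoking directly the known estimate $\Xi^{[G]}(g)\sim e^{\rho(H_0(g))}$ for $g$ in a Siegel set of $[G]$, which is the adelic analogue of Harish-Chandra's lower bound and is the base case — and assembling $\rho=\rho_P\circ(\text{projection})+\rho_{P_0}^P$ on $\a_0$, one gets $\Xi^{\ggp}(\exp X)\sim e^{\rho(X)}$, hence $\Xi^{\ggp}(g)\sim e^{\rho(H_0(g))}$ for $g\in\s_P$ using \eqref{ssg} and $H_0(g)\sim X$. The main obstacle I anticipate is book-keeping the measure normalizations and the passage through the quotient $[G]_P$ cleanly — in particular making sure that the "section" inequality \eqref{section} can be upgraded to a genuine two-sided comparison $\sim$ on the relevant range of $x$, which is where the Siegel-set geometry (uniform lower bound on the number of fundamental-domain sheets met by $xB$) really enters, rather than the archimedean-style computation of $\mathrm{vol}(\exp X\cdot B)$, which is routine.
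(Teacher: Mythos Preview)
Your reduction step --- $\Xi^{\ggp}(mk)\sim e^{\rho_P(H_P(m))}\,\Xi^{[M_P]}(m)$ --- matches the paper's approach: the main text simply cites \cite{BCZ}, section~2.4, but the detailed argument (given in the paper's appendix) first proves exactly this identity by taking $B=B_{N_P}B_{M_P}K$ and using the integration formula attached to $[G]_P=[N_P][M_P]K$, rather than a contraction heuristic. Where your plan goes astray is the ``iterating down to $P_0$'' alternative: the reduction formula sends $\Xi^{\ggp}$ to $\Xi^{[M_P]}$, i.e.\ to the \emph{full-group} case for the smaller group $M_P$, not to any further parabolic quotient $\Xi^{[M_P]_Q}$; there is no inductive descent available without first proving the case $P=G$ for every reductive group. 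So only your second alternative --- invoking the base case $\Xi^{[G]}(g)\sim e^{\rho(H_0(g))}$ on $\s_G$ --- is viable, and that base case is the substantive content of the lemma.

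You correctly flag the real obstacle: upgrading the one-sided inequality (\ref{section}) to a two-sided $\sim$ on the Siegel range. The appendix resolves it as follows. By invariance, $vol_{[G]}(gB)=vol_{[G]}(gBg^{-1})$; for $g\in\s_G$ one arranges (choosing $B$ small enough and using that conjugation by $\exp(-X)$, $X\in\a_0^+$, contracts $N_0$) that $gBg^{-1}\subset N_0(F)B'$ for a fixed compact $B'$. The key two-sided comparison is then $vol_{[G]}(gBg^{-1})\sim vol_{P_0(F)\backslash G(\A)}(gBg^{-1})$, which holds because the set of $\gamma\in P_0(F)\backslash G(F)$ with $\gamma B'\cap P_0(F)B'\neq\vid$ is finite (discreteness of $G(F)$ in $G(\A)$). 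One then sandwiches $vol_{P_0(F)\backslash G(\A)}(gBg^{-1})$ between $vol_{[G]_{P_0}}$-volumes of conjugated balls and applies the already-proved reduction formula with $P=P_0$ (where $\Xi^{[M_0]}$ is bounded above and below since $[M_0]^1$ is compact), yielding $vol_{[G]}(gBg^{-1})\sim e^{-2\rho(H_0(g))}$. This conjugation-and-coset-counting step is the idea missing from your sketch.
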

Let $G_\infty $ be  the product of $G(F_v) $ where $v$ describes the archimedean places and let $U( \g_ \infty) $ be the enveloping algebra of the Lie algebra $\g_\infty$ of this real Lie group. 
We have similar definition for subgroups of $G$ defined over $F$.\\  
One has the  Schwartz space $\cC([G])$, denoted $ \cS([G]) $ in \cite{L2},  corollary 2.6.  
From Lemma \ref{xiap} and (\ref{ssg}), it can be defined (see \cite{L2}, section 1.1) as the space of functions $\phi $ in $C^\infty ([G])$ such that for all $n \in \N$ and $u\in U(\g_\infty)$:
$$\vert (R_u\phi)(x) \vert  <<  \sigma_{[G]}^{-n}( x)    \Xi^{[G]}(x) , x \in [G].$$  

\section{Tempered automorphic forms}

\subsection{Definition of temperedness}

The space of automorphic forms  on $[G]=G(F)\backslash G(\A)$, $\cA(G)$  is defined as in \cite{MW}, I.2.17. 
In particular they are $K$-finite. \\  If  $\phi\in \cA(G)$, it  has uniform moderate growth on $G ( \A)$ (cf. lc.  end of I.2.17, Lemma I.2.17 and Lemma I.2.5).\\ 
% From (\ref{norm}) this property can be expressed  in term of $\Vert . \Vert_{[G]} $. Namely  there exists $r>0$ such that 
It means that there exists $r >0$ such that for all $u \in U( \g_\infty)$: 
$$\vert R_u \phi ( g) \vert <<  \Vert  g \Vert^r_{[G]}, g \in G(\A) .$$ 
Let $\Omega$ be a compact subset of $G(\A)$.  
%   One has $$ \Vert g  \omega\Vert_{[G]} \approx  \Vert g  \Vert_{[G]}, g \in G(\A), \omega \in \Omega. $$
Then one sees easily  that this implies that for all $u \in U( \g_\infty)$, one has 
\beq  \label{unifm} \vert R_u R_\omega \phi(g)\vert<<   \Vert  g \Vert^r , g \in G(\A) , \omega\in\Omega.\eeq 
If $P$ is a standard parabolic subgroup of $G$, the space of automorphic forms on $[G]_P=N_P(\A)M(F)\backslash G(\A)$ denoted $A(N_P(\A) M(F) \backslash G(\A))$ in l.c.  will be denoted   $\cA_P(G)$.  
The constant term along $P$ (cf. \cite{MW}, I.2.6), $\phi_P$,  of an element $\phi$ of $ \cA(G)$ is an element of $\cA_P(G)$.  
Similarly if $Q$ is  a standard parabolic subgroup contained in $P$ and $\phi \in \cA_P(G)$, $\phi_Q$ is a well defined element of $\cA_Q(G)$. 
Let $ \cA^n_P(G) $  be the space  of elements  of $ \phi  \in   \cA_P(G)$ such that: 
$$\phi(  exp  X g) = e^{\rho_P(X) }\phi (g), g \in G(\A), X \in \a_P.$$ 
If $\phi \in \cA_P(G) $, and $\lambda \in \a_{P,\C}^*$ we define 
$$ \phi_\lambda  (g)= e^{ \lambda (H_P(g))} \phi (g), g\in  G( \A). $$ 
We view $S(\a_P^*)$ as the space of polynomial functions on $\a_P$  and $S(\a_P^*) \otimes \cA^n_P(G)$ as a space of functions on $G(\A)$ by setting 
$(p\otimes \phi)(g)=p (H_P(g)) \phi(g)$. 
If $\phi \in \cA_P(G)$,  one can write uniquely: 
\beq \label{1} \phi  (g)= \sum_{\lambda \in \cE_P( \phi) } e^{\lambda(H_P(g))} (\phi_{0,\lambda}) (g) , \eeq 
where $\cE_P ( \phi) \subset \a_{P, \C}^*$,  $ \phi_{0,\lambda}$ is a non zero element of $ S(\a_P^*) \otimes \cA^n_P(G) $. 
The set $ \cE_P( \phi)$ is called the set of exponents of $\phi$. 
%If $ \phi \in \cA_P(G), \psi \in \cA_P^n(G)$  and $\lambda \in \a_{P, \C}^* $   such that $ \psi= \phi_{ \lambda} $ we will denote \beq \label{2} \psi=\phi^0 \eeq
We define also $\cA^2_P(G)$ as the subspace of  elements  of $\cA^n_P( G) $ such that: 
$$\Vert \phi \Vert^2_P = \int_K  \int_ {A^\infty_{P}M_P( F)  \backslash M_P(\A) } \vert \phi( mk) \vert^2e^{- 2 \rho_P(H_P(m))} dm dk. $$ 
We will need the following variant  of \cite{MW}, Lemma I.2.10:
\ber\label{mw}   
a) The statement  of \cite{MW},  Lemma I.2.10 remains true if one changes $P$ to a standard parabolic subgroup and, in the conclusion, one changes $\alpha$ to 
$\beta_P:= \inf_{\alpha \in \Delta_0 \setminus \Delta_0^P} \alpha$.\\ 
b) If one replaces $m_{P_0}^\lambda (g)$ in the hypothesis by $m^\lambda _{P_0} (g) (1+ \Vert log( m_{P_0} (g))\Vert  )^n$ for some $n$, one can replace 
$m_{P_0} (g)^{\lambda -t \alpha}$  by $m_{P_0} (g)^{\lambda -t \beta_P} (1 +\Vert log( m_{P_0} (g))\Vert  )^n$  in the conclusion.\\ 
c) One can also replace in the statement $\Vert a \Vert ^r $ by $ (1 +\Vert log a\Vert ) ^r$ for $a \in A_G $.  
\eer 
To prove a) one has simply to replace  $\alpha$ by $\beta_P $  in (1) of  the proof. \\ 
To prove b) one has to use  that $(1+ \Vert( log m_{P_0} (g))\Vert  )^n $ is $U_0( \A)$ invariant  after (6) in the proof.\\ 
(c) is obtained by replacing  $\Vert a \Vert ^r $ by $ (1 +\Vert log a\Vert )^r$ in the proof.

\begin{lem}\label{abc} 
Let $P$ be a standard parabolic subgroup of $G$. Let $d >0$.  
Let $\phi\in \cA_P(G) $. The following conditions are equivalent:\\
a)   $$ \vert \phi(x)\vert << \Xi^\ggp (x) \sigma_\ggp (x)^{d}, x \in \ggp.$$
b) For all Siegel domains $\s_P$, one has:
$$ \vert \phi( g  )\vert  << e^{ \rho(H_0(g))}  ( 1 + \Vert H_0(g)\Vert   )^{d},  g \in \s_P.$$
c) For  every compact subset  $\Omega $ of $ G(\A)$, one has:   
$$  \vert \phi( exp X  \omega   )\vert  <<  e^{\rho(X)} (1+ \Vert X\Vert )^d, \omega \in \Omega, X \in \a_0^{+,P} .$$
\end{lem}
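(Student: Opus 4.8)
The strategy is to prove the cycle of implications $a)\Rightarrow b)\Rightarrow c)\Rightarrow a)$, using the comparison estimates from Section~2 to translate between the intrinsic quantities $\Xi^\ggp$, $\sigma_\ggp$ on $\ggp$ and the coordinate quantities $e^{\rho(H_0(g))}$, $\Vert H_0(g)\Vert$, $\Vert X\Vert$ on Siegel domains. The key dictionary is: Lemma~\ref{xiap} giving $\Xi^\ggp(g)\sim e^{\rho(H_0(g))}$ on $\s_P$; the chain of equivalences (\ref{ssg}) giving $\sigma_{[G]_P}(g)\sim 1+\Vert H_0(g)\Vert\sim 1+\Vert X\Vert$ on $\s_P$; and the containment (\ref{siegel}) writing any $g\in\s_P$ as $\omega_0\,\exp X\,\omega$ with $\omega_0\in\Omega_{N_P}$, $\omega\in\Omega$, $X\in\a_0^{+,P}$.

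\medskip

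First, $a)\Rightarrow b)$: fix a Siegel domain $\s_P$. Since $\s_P\subset\ggp$ (its image in $\ggp$ is all of $\ggp$, and the relevant estimates are left $M_P(F)N_P(\A)$-invariant in the appropriate sense for the majorizing functions), apply $a)$ along $\s_P$ and substitute $\Xi^\ggp(g)\sim e^{\rho(H_0(g))}$ from Lemma~\ref{xiap} and $\sigma_\ggp(g)\sim 1+\Vert H_0(g)\Vert$ from (\ref{ssg}). This yields $b)$ directly. Next, $b)\Rightarrow c)$: given a compact $\Omega\subset G(\A)$, one must realize points of the form $\exp X\,\omega$ with $X\in\a_0^{+,P}$, $\omega\in\Omega$, inside a single Siegel domain (after possibly enlarging the compact part $\Omega_0$ of $\s_P$ to absorb $\Omega$, and using that $\exp X$ for $X\in\a_0^{+,P}$ differs from a point of the Siegel cone $\{\exp X\mid\alpha(X+T)\ge 0\}$ only by the fixed element $\exp T$, cf.\ the discussion after (\ref{defsiegel})). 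On such points $H_0(\exp X\,\omega)=X+O(1)$ uniformly for $\omega\in\Omega$, so $\rho(H_0(g))=\rho(X)+O(1)$ and $1+\Vert H_0(g)\Vert\sim 1+\Vert X\Vert$; feeding these into $b)$ gives $c)$.

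\medskip

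The step I expect to be the main obstacle is $c)\Rightarrow a)$, because $c)$ only controls $\phi$ on sets of the shape $\exp X\,\omega$ with $\omega$ in a \emph{fixed} compact, whereas $a)$ demands a bound everywhere on $\ggp=M_P(F)N_P(\A)\backslash G(\A)$. The point is that $\ggp$ is covered by a single Siegel domain $\s_P$ (by definition of a Siegel domain, $G(\A)=M_P(F)N_P(\A)\s_P$), so it suffices to bound $\phi$ on $\s_P$; but $\phi\in\cA_P(G)$ is left $M_P(F)N_P(\A)$-invariant in the sense relevant to $\ggp$ only up to the character $e^{\rho_P}$ built into $\cA_P^n$, so one should first reduce to the case $\phi\in\cA_P^n(G)$ by decomposing along exponents as in (\ref{1}) — here the uniform bound on one-variable exponential polynomials in terms of their exponents (the tool cited from \cite{L1}) lets one pass between the full $\phi$ and the pieces $\phi_{0,\lambda}$. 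Then, for $g\in\s_P$, write $g=\omega_0\,\exp X\,\omega$ via (\ref{siegel}); the factor $\omega_0\in\Omega_{N_P}$ is harmless because on $\cA_P(G)$ left translation by $N_P(\A)$ is trivial (constant terms are $N_P(\A)$-invariant), and $\omega\in\Omega$ lies in the fixed compact of $c)$. Applying $c)$ gives $\vert\phi(g)\vert\ll e^{\rho(X)}(1+\Vert X\Vert)^d$, and then (\ref{ssg}) and Lemma~\ref{xiap} convert $e^{\rho(X)}\sim\Xi^\ggp(g)$ and $(1+\Vert X\Vert)\sim\sigma_\ggp(g)$, yielding $a)$. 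The only genuinely delicate point is checking that the implicit constants in $c)$ — which a priori depend on $\Omega$ — can be chosen uniformly after the exponent decomposition, which is where one invokes the $\cA_P(G)$ structure (finitely many exponents, each $\phi_{0,\lambda}$ a polynomial times an element of $\cA_P^n(G)$) together with (\ref{unifm})-type uniformity in the compact variable.
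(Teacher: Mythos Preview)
Your approach is close to the paper's, which proves $a)\Leftrightarrow b)$ directly from Lemma~\ref{xiap} and (\ref{ssg}), then $c)\Rightarrow b)$ via (\ref{siegel}) and $N_P(\A)$-invariance, and declares $b)\Rightarrow c)$ ``similarly''. Two remarks.

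First, your $c)\Rightarrow a)$ contains an unnecessary detour. You worry that $\phi\in\cA_P(G)$ is ``left $M_P(F)N_P(\A)$-invariant only up to the character $e^{\rho_P}$'' and propose to reduce to $\cA_P^n(G)$ via the exponent decomposition (\ref{1}) and the exponential--polynomial bound from \cite{L1}. But by definition $\cA_P(G)$ consists of functions on $[G]_P=M_P(F)N_P(\A)\backslash G(\A)$, so $\phi$ is genuinely left $M_P(F)N_P(\A)$-invariant, full stop. The character $e^{\rho_P}$ in the definition of $\cA_P^n(G)$ governs the behavior under $A_P^\infty$, which is an entirely separate matter. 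Once you drop this detour, your argument for $c)\Rightarrow a)$ (equivalently $c)\Rightarrow b)$) is exactly the paper's: write $g\in\s_P$ as $\omega_0\exp X\,\omega$ via (\ref{siegel}), remove $\omega_0\in\Omega_{N_P}$ by $N_P(\A)$-invariance, apply $c)$ with the fixed compact $\Omega$, and convert via (\ref{xh}). No uniformity issue arises, since only one compact $\Omega$ is needed.

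Second, your $b)\Rightarrow c)$ has a gap: you cannot in general fit $\{\exp X:X\in\a_0^{+,P}\}\cdot\Omega$ into a single Siegel domain by ``enlarging $\Omega_0$ to absorb $\Omega$''. The obstruction is that $\Omega_0$ sits to the \emph{left} of $\exp X$ in the Siegel decomposition while $\omega$ sits to the right; conjugating the $(N_0\cap M_P)$-component of $\omega$ across $\exp X$ expands it (since $\alpha(X)\ge 0$ for $\alpha\in\Delta_0^P$), so it does not remain in any fixed compact. The paper's ``similarly'' glosses over this as well. A clean fix is to route through $a)$: both $\Xi^{\ggp}$ and $\sigma_{\ggp}$ are weights in Bernstein's sense, so $\Xi^{\ggp}(x\omega)\sim\Xi^{\ggp}(x)$ and $\sigma_{\ggp}(x\omega)\sim\sigma_{\ggp}(x)$ uniformly for $\omega$ in a compact (immediate from (\ref{equb}) and the infimum definition of $\sigma_{\ggp}$). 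Thus $a)$ at $\exp X\,\omega$ reduces to $a)$ at $\exp X$, and $\exp X$ itself lies in a Siegel domain where Lemma~\ref{xiap} and (\ref{ssg}) give $\Xi^{\ggp}(\exp X)\sim e^{\rho(X)}$ and $\sigma_{\ggp}(\exp X)\sim 1+\Vert X\Vert$.
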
 

\begin{proof} 
a)  is equivalent to  b)  follows from  Lemma \ref{xiap} and  (\ref{ssg}).\\  
To prove  c) implies b),  we choose (cf. (\ref{siegel})), a compact subset  $\Omega$ of $ G(\A)$  and a compact subset, $\Omega_{N_P}$ of $N_P( \A)$ such that 
$ \s_P \subset \Omega_{N_P} A_0^{\infty,+,P} \Omega $, where $A_0^{ \infty, +,P} = \{ exp X\vert X \in \a_0^{+,P}\}.$ 
One has for $g \in \s_P$: 
\beq \label{xh} \Vert X-H_0(g)\Vert << 1, g= \omega_{N_P} exp X \omega, X \in \a_0^{+,P}, \omega_{N_P}  \in \Omega_{N_P},  \omega \in \Omega .\eeq  
Then c) implies b) follows. \\ 
Similarly   b) implies c).
\end{proof}

%\begin{proof}  (i) The inequality of a)   is obtained by using the key Lemma of [B], p.686 (see the consequence of this key Lemma , l-5, -6 of p.686): here one can take $dm_X(x)=\Xi(x)^2dx$ where  $dx$ is the  $G$-invariant measure,   as it follows  from the proof of l;c. lemma 3.3 (ii). \\(Follows immediately from  (\ref{xiap}), (\ref{ssg}) and (\ref{sh}).\\   (iii)   This follows from (ii) applied to $  exp X g$, $X \in \a^+_P$. One remarks that $ H_P(exp tX g)= tX +H_P(g)$. There is a dense open set $O$  in $\a^+_P$ such that different exponents of $\phi$ take different value on any element of $O$   Then  CasselmanMillicic Proposition A.2.1   ( write $\phi_P=  \sum_{}$ L (1) )  give  if $\psi_\lambda (g)\not=0 $,   $Re\lambda (X) \leq 0, \lambda \in \cE_P, X \in O$ . As $\psi_\lambda$ is not identically zero, one has $Re\lambda (X) \leq 0 , X \in O$ hence also for $X\in  \a^+_P$ by density. \end{proof}

\begin{defi}\label{deftemp}   
Let us define the space of tempered automorphic forms on $[G]_P$, $\cA^{temp}_P(G)$,  as the space of automorphic forms satisfying, 
as well as its derivatives by elements of $U( \g_\infty) $, the   equivalent properties a), b), c) of the preceding Lemma  for some $d$. 
\end{defi}

\begin{rem}  
This notion was introduced by J. Franke in \cite{F} (cf. also \cite{W1}, section 4.4  where the space of tempered form is denote $\cA_{log}(G)$).  
\end{rem}

\begin{lem}\label{key} 
Let $d> 0$. if $ \phi\in \cA(G)$  is such  that all its derivatives by elements of $U( \g_\infty)$ are in $L^2([G], \sigma_{[G]}^{-d} dx)$ then $ \phi $ is tempered. 
\end{lem}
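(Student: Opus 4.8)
The claim is that an automorphic form $\phi\in\cA(G)$ whose $U(\g_\infty)$-derivatives all lie in $L^2([G],\sigma_{[G]}^{-d}\,dx)$ is automatically tempered, i.e. satisfies (after taking derivatives) the bound (a) of Lemma \ref{abc} for some $d'$. The basic idea is the classical one: an $L^2$ bound together with control of derivatives upgrades to a pointwise bound via a Sobolev-type/mean-value argument on small balls, and then one must check this pointwise bound has the right shape, namely $\Xi^{[G]}\sigma_{[G]}^{d'}$.

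First I would fix a small symmetric ball $B$ around $1$ in $G(\A)$ and, for $x\in[G]$, estimate $|\phi(x)|$ by an integral of $|\phi|$ (or of finitely many right derivatives $R_u\phi$) over the translate $xB$. The mechanism is: using that $\phi$ has uniform moderate growth, so that on $xB$ the values $|R_u\phi(xb)|$ are comparable up to a fixed power of $\|x\|$, one gets a pointwise bound from an $L^1$-over-$xB$ bound; and the $L^1$ bound over $xB$ comes from Cauchy–Schwarz together with the $L^2(\sigma_{[G]}^{-d}dx)$ hypothesis applied to suitable derivatives, paying a factor $(\mathrm{vol}_{[G]}\,xB)^{1/2}=\Xi^{[G]}(x)^{-1}$ and a factor $\sigma_{[G]}(x)^{d/2}$ (since $\sigma_{[G]}$ is $\approx$ constant on $xB$). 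Concretely one writes, schematically,
$$
|\phi(x)|^2 \ll \frac{1}{\mathrm{vol}_{[G]}(xB)}\int_{xB}\sum_{u}|R_u\phi(y)|^2\,dy \ll \Xi^{[G]}(x)^2\,\sigma_{[G]}(x)^{d}\,\Big(\int_{[G]}\sum_u |R_u\phi|^2\sigma_{[G]}^{-d}\Big),
$$
which after taking square roots gives exactly $|\phi(x)|\ll \Xi^{[G]}(x)\,\sigma_{[G]}(x)^{d/2}$, i.e. condition (a) with exponent $d/2$. The same argument applied to $R_v\phi$ in place of $\phi$ (its derivatives are still in the same $L^2$ space by hypothesis) gives the bound for all derivatives, which is what Definition \ref{deftemp} requires.

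The step I expect to be the main obstacle is the passage from the $L^2$-integral over $[G]$ to the local $L^2$-integral over the image $[xB]$ of $xB$ in $[G]$, and controlling the volume factor $\mathrm{vol}_{[G]}(xB)$ uniformly. For this I would use the local injectivity/covering statement (\ref{section}) to compare integrals over $xB\subset G(\A)$ with integrals over its image in $[G]$, being careful that for $x$ ranging in a Siegel set the projection $xB\to[G]$ is at worst finite-to-one with uniformly bounded fibres (this is where reduction theory, i.e. the Siegel-domain description, enters); and I would identify the volume of the projected ball with $\Xi^{[G]}(x)^{-2}$ up to equivalence, using the very definition of $\Xi^{[G]}$ as $(\mathrm{vol}_{[G]}xB)^{-1/2}$ and remark (\ref{equb}) that the class is independent of $B$. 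Finally, to see $\sigma_{[G]}$ is essentially constant on $xB$ one uses the submultiplicativity of the height $\|\cdot\|$ and the definition $\sigma=1+\log\|\cdot\|$, together with (\ref{ssg}); this makes it legitimate to pull $\sigma_{[G]}(x)^{-d}$ out of the integral over $xB$. Once these uniformity points are settled the argument is routine, and it closes the lemma.
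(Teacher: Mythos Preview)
Your proposal is correct and follows the same approach as the paper: both use a Sobolev-type inequality over balls $xB$ to convert the weighted $L^2$ hypothesis into the pointwise bound $|\phi(x)|\ll\Xi^{[G]}(x)\,\sigma_{[G]}(x)^{d/2}$. The paper streamlines the argument by directly invoking Bernstein's Key Lemma on $X=[G]$ with the measure $dm_X=(\Xi^{[G]})^2\,dx$ (so that $\nu=(\Xi^{[G]})^2$ and $w=\sigma_{[G]}^{d}$ are ``weights'' in Bernstein's sense and can be pulled out of the ball integral), which absorbs the volume factor and renders your concerns about the projection $G(\A)\to[G]$ and about uniform moderate growth unnecessary.
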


\begin{proof}
Let us use the notation of \cite{B}, Lemma-Definition 3.3.  From the proof of this Lemma, one can take $dm_X(x)=\nu(x) dx$ where  $dx$ is the 
$G(\A)$-invariant measure on $X= [G]$ and $\nu= (\Xi^{[G]})^2$,  as it follows  from the proof of l.c. Lemma-Definition  3.3 (ii).\\
Let $k\geq dim G$ and $f $ be a continuously $k$-times differentiable  function  on $G(\A)$. 
Fix a basis $d_1, \dots, d_r$ of the space  $U(\g_\infty)^k$  of elements of $U(\g_\infty) $ of degree $\leq k$ and define:
$$Q(f) = \sum_i \vert d_i f \vert ^2.$$ 
Let $J$ be a compact open subgroup of $G(\A_f)$. 
Let  $C([G])^{k,J} $ be the space of continuously $k$-times differentiable and fixed by $J$. 
One has from the Key Lemma of l.c., p. 686:
$$\vert f(x) \vert^2 << \int_{xB} Q(f) dm_X= \int_{xB}Q(f) \nu(y) dy, x \in [G], f \in C([G])^{k,J}$$
Let $w = \sigma_{[G]}^d $.
Then 
$$\vert f(x) \vert^2  <<\int_{xB} Q(f)\nu w  w^{-1} dy,x \in [G] .$$
We use now that $w$ is a weight, in the sense of \cite{B}, Definition 3.1, as well as $\nu$ to get: 
$$\vert f(x) \vert^2 << \nu(x) w(x) \int_{xB} Q(f)  w^{-1} dx \leq  \nu(x) w(x) \int_{[G]}Q(f)   w^{-1} dx,x \in [G] .$$ 
As our hypothesis implies that $\int_{[G]}Q(f)  w^{-1} dx <\infty$,  this finishes the proof of  the Lemma. 
\end{proof} 

\subsection{Characterization of temperedness  and definition of the weak constant term}

\begin{prop}\label{subtemp} 
Let $\phi \in  \cA (G)$. It is in $\cA^{temp}_G$ if and only if  the  exponents of its constant term $\phi_P$ along any standard parabolic subgroup $P$ of $G$ are subunitary.
\end{prop}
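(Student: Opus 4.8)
**The plan is to prove both implications by relating the growth of $\phi$ on Siegel domains to the exponents of its constant terms, using the reduction theory recalled in the Notation section and the variant of \cite{MW}, Lemma I.2.10 stated in \eqref{mw}.**

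For the ``only if'' direction, suppose $\phi \in \cA_G^{temp}$ and let $P$ be a standard parabolic subgroup. The constant term $\phi_P$ is obtained by integrating $\phi$ over $N_P(F)\backslash N_P(\A)$, and a standard computation shows $\phi_P$ inherits the temperedness bound, i.e. $\phi_P$ satisfies condition c) of Lemma \ref{abc} (as a function on $[G]_P$, with the relevant $\a_0^{+,Q}$ for $Q \subset P$). Writing $\phi_P = \sum_{\lambda \in \cE_P(\phi)} e^{\lambda(H_P(\cdot))}\phi_{0,\lambda}$ as in \eqref{1}, one evaluates along $\exp X$ for $X \in \a_P^+$ deep in the chamber: the distinct exponents $\lambda$ separate the terms (an exponential-polynomial in one real variable argument, or a standard linear-independence-of-exponents argument over $\a_P$), so each $e^{\mathrm{Re}\,\lambda(X)}(1+\Vert X\Vert)^{\deg}$ must be dominated by $e^{\rho_P(X)}(1+\Vert X\Vert)^d$. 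Because $\rho_P$ restricted to $\a_P$ is $0$ (it lies in $\a_P^{G,*}$; more precisely one uses $\rho = \rho_{P_0}$ and the relation $\rho_{P_0} = \rho_{P_0}^P + \rho_P$ on $\a_P$, with $\rho_P$ matching the modulus character), comparing exponential rates on $\a_P^+$ forces $\mathrm{Re}\,\lambda(X) \le 0$ for all $X \in \a_P^+$, which is exactly subunitarity by the discussion around \eqref{subunitary}.

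For the ``if'' direction, assume every $\phi_Q$, $Q$ standard, has subunitary exponents; we must recover the global bound a)/b)/c) of Lemma \ref{abc} for $\phi = \phi_G$ and all its $U(\g_\infty)$-derivatives. Since $\phi \in \cA(G)$ has uniform moderate growth \eqref{unifm} and $R_u\phi$ again has constant terms along $P$ equal to $R_u(\phi_P)$ (up to the usual identifications), it suffices to treat $\phi$ itself. The mechanism is an induction on the semisimple rank using the approximation of $\phi$ by its constant term: on the part of a Siegel domain where $\alpha(X)$ is large for $\alpha \in \Delta_P \setminus \{$one root$\}$ but the remaining root stays bounded, $\phi$ is well approximated by $\phi_P$, whose exponents are controlled by hypothesis; this is precisely the situation governed by the variant \eqref{mw} of \cite{MW}, Lemma I.2.10, with $\alpha$ replaced by $\beta_P$ and the extra $(1+\Vert \log m_{P_0}\Vert)^n$ factors tracking the polynomial parts $\phi_{0,\lambda} \in S(\a_P^*)\otimes \cA_P^n(G)$. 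Covering $\a_0^{+,P}$ by finitely many such ``corners'' (one for each proper standard $P$, plus a compact core handled by continuity), and using \eqref{xh} to pass between $X$ and $H_0(g)$, assembles the uniform estimate $\vert\phi(g)\vert \ll e^{\rho(H_0(g))}(1+\Vert H_0(g)\Vert)^d$ on all of $\s_G$ for some $d$.

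**The main obstacle** is the ``if'' direction: the passage from control of exponents of constant terms to a genuine uniform growth bound on the whole Siegel domain is not formal — it requires the careful covering of the positive chamber by regions where one constant term dominates, and the bookkeeping of the polynomial (logarithmic) factors across the overlaps, which is exactly why the strengthened Lemma \eqref{mw}(b),(c) was isolated beforehand. The ``only if'' direction, by contrast, is essentially the easy observation that constant terms preserve temperedness together with the elementary fact that subunitarity of $\lambda$ is equivalent to $\mathrm{Re}\,\lambda \le 0$ on $\a_P^+$.
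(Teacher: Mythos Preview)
Your ``only if'' direction is correct but takes a different route from the paper. You argue that the constant term $\phi_P$ directly inherits the tempered bound (by averaging over the compact $[N_P]$ and using that conjugation by $\exp(-X)$, $X\in\a_0^+$, contracts $N_P$), and then read off subunitarity from the exponential--polynomial expansion along $\a_P^+$. The paper instead bounds $\phi_P$ indirectly: it uses the approximation estimate \eqref{mw} to show $|\phi(\exp tXg)-\phi_P(\exp tXg)|\ll e^{\rho(tX)}e^{-kt}$ for $X\in\a_P^{++}$, combines this with the tempered bound on $\phi$ itself, and then applies \cite{CM}, Proposition~A.2.1, on a dense open set of $\a_P^{++}$ where the exponents separate. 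Your approach is arguably more direct; the paper's has the advantage that the same difference estimate \eqref{mw} is reused later (Lemmas~\ref{diff1}, \ref{diff}).

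For the ``if'' direction there is a genuine gap in your sketch. The paper does not argue this at all: it simply cites \cite{MW}, Lemma~I.4.1, which is exactly the statement that subunitary exponents of all constant terms force the tempered growth bound. Your proposed mechanism --- cover $\a_0^+$ by corners, approximate $\phi$ by $\phi_P$ via \eqref{mw}, and control $\phi_P$ by its exponents --- is the right circle of ideas underlying that lemma, but as written it does not close. The estimate \eqref{mw} bounds $\phi-\phi_P$ only in terms of an \emph{a priori} growth bound on $\phi$ (it improves the exponent by $t\beta_P$), so to reach the tempered bound $e^{\rho(X)}(1+\Vert X\Vert)^d$ from mere moderate growth one needs an iterative bootstrap, improving the exponent step by step; this is the actual content of \cite{MW}, I.4.1, and your single pass with one $P$ per corner does not supply it. Also, your description of the corners (``$\alpha(X)$ large for $\alpha\in\Delta_P\setminus\{$one root$\}$'') is garbled: the relevant region for approximating by $\phi_P$ is where $\beta_P(X)=\min_{\alpha\in\Delta_0\setminus\Delta_0^P}\alpha(X)$ is large, not where all but one root of $\Delta_P$ are large. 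The cleanest fix is simply to invoke \cite{MW}, Lemma~I.4.1, as the paper does.
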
 

\begin{proof} 
Let us prove Let us show that the condition is necessary. Let $ \phi \in  \cA^{temp}(G) $.  Let $X\in \a_G$. 
Then there exists $d $ such that for all $g \in [G]$: 
$$\vert \phi(gexp tX)\vert<< (1+t)^d, t>0.$$
It follows from \cite{CM}, Proposition A.2.1, that the exponents of $\phi$ restricted to $\a_G$ are unitary. 
Let $P$ be a standard parabolic subgroup of $G$. 
Let $X \in \a^{++}_P\cap  \subset \a_0^+$, where $ \a^{++}_P $ is the interior of $\a_P^+$. 
Let  $ g \in G(\A), t\in \R $. From  the property c) of the definition of temperedness, applied to $\Omega= g $, one gets: 
\beq \label{phil} \vert \phi (exp tXg)\vert  <<  e^{ \rho(tX) }( 1+ t)^d, t>0. \eeq 
Due to (\ref{unifm}), one can apply (\ref{mw}) to the right translate by $g $ of $\phi$. 
Write $X=X_G + X^G$ with $X_G \in \a_G, X^G \in \a^G$. 
Applying (\ref{mw}) b) and c) for the parameter $t$ from \cite{MW}, Lemma I.2.10 large and  with $g$ of l.c. equals to $exp tX^G\in \s_G\cap G(\A)^1$, $a$ equals to 
$exp tX_G$, one gets, for $k>0$,
$$\vert \phi (exp tXg) -\phi_P(expt Xg )\vert  <<e^{\rho(tX)} e^{-k t}, t >0. $$
Together with (\ref{phil}) this implies that the exponential polynomial in $t$, $ \phi_P(expt Xg )$ satisfies: 
$$ \vert \phi_P(expt Xg )\vert  << e^{\rho(tX)} ( 1+ t)^d, t >0.$$
There is a dense open set $O$  in $\a^{++} _P$ such that different exponents of $\phi_P$ take different values on any element of $O$. 
We use the notation of (\ref{1}).  
Then  \cite{CM}, Proposition A.2.1  gives:\\
If  $\phi_{P,0,  \lambda}  (g)\not=0 $:  
$$Re\lambda (X) \leq 0, \lambda \in \cE_P(\phi), X \in O.$$ 
As $\phi_{P,0,  \lambda} $ is not identically zero, one has $Re\lambda (X) \leq 0  $ for $ X \in O$, hence also for $X\in  \a^+_P$ by density. 
This achieves the proof of (i).\\ 
The sufficiency of the condition  follows from \cite{MW}, Lemma I.4.I. 
\end{proof}

\begin{defi}\label{defctw}
Let $\phi \in \cA^{temp}(G)$ and let $P$ be a standard parabolic subgroup of $G$. 
We define the weak constant term of $\phi$, denoted $\phi_P^w$ as the sum of the terms in (\ref{1}) corresponding to unitary exponents. 
It is an element of $\cA^{temp}_P( G) $ from the preceding Proposition applied to $M_P$ (see below Lemma \ref{transw} for a detailed proof). 
Let $\phi_P^-= \phi_P- \phi_P^w$  and let us denote $\cE_P^w( \phi) $ (resp. $\cE_P^-( \phi) $) the exponents of $\phi_P^w$ (resp. $\phi_P^-$). 
\end{defi}

\subsection{Transitivity of the weak constant term} \label{transi}
% For a function on $G(\A)$  one defines  $$ \underline{\phi}(g) = e^{–\rho(H_P (g))}\phi(g), g \in G(\A) $$
Let $ Q\subset P$ be standard parabolic subgroups of $G$. 
If  $ \phi$ is a function on  $G(\A)$ and $k \in K$, we define  a function on $M_Q(\A)$ by:
$$\phi^{ k, M_Q}(m_Q) = e^{-\rho_Q(H_Q( m_Q))} \phi (m_Qk), k \in K$$
where $\rho_Q \in \a_Q$ is the restriction of $ \rho$ to $\a_Q$, which can be extended to $\a_0$ by zero on $\a_0^Q$. 
One has the  following  immediate properties, by coming back to the definitions: 
\ber\label{trans}  
If $\phi\in \cA(G) $ one has:$$ \phi_Q = (\phi_P)_Q, (\phi_Q) ^{ M_Q,k} =[( (\phi_P)^{  M_P,k} )_{Q \cap M_P}]^{M_Q,1} , k \in K$$ 
\eer
Notice that the function in bracket is a function on  $M_P( \A)$, hence the upper index   $^{ M_Q,1} $ indicates that we multiply by $e^{- \rho_{Q \cap M_P} (H_Q (m))}$ the restriction of this function to $M_Q(\A)$. 
\begin{lem} \label{transw}
Let $Q \subset P$ be as above.\\
(i) If $\phi\in\cA_P^{temp} (G) $ the exponents  of $\phi_Q$  are subunitary and one can define $\phi_Q^w$ as  the sum of the terms  of $\phi_Q$ 
corresponding to unitary exponents.\\
If $\phi \in \cA^{temp}(G)$ one has:\\
(ii) %$ \phi^{M_P,k} \in \cA^{temp}(M_Q)$ for all $k\in K$$, 
$\phi_P^w$    is in $\cA_P^{temp}(G)$.\\
(iii)
$$\phi_Q^{w}= (\phi^{w}_P)^{w}_Q .$$
(iv) 
$$(\phi^w_Q) ^{M_Q,k} =[( (\phi^w_P)^{  M_P,k} )^w_{Q \cap M_P}]^{ M_Q,1} , k \in K$$
%\phi_Q^{ww}= (\phi^{ww}_P)^{ww}_Q $$
\end{lem}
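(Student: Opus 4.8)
The plan is to prove the four assertions of Lemma \ref{transw} in the order (i), (ii), (iii), (iv), reducing everything to Proposition \ref{subtemp}, Definition \ref{defctw}, and the transitivity formulas (\ref{trans}).

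\textbf{Step (i).} Let $\phi\in\cA_P^{temp}(G)$. By definition of $\cA_P^{temp}(G)$, the function $\phi^{M_P,k}$ on $M_P(\A)$, defined via multiplication by $e^{-\rho_P(H_P(\cdot))}$, lies in $\cA^{temp}(M_P)$ for each $k\in K$ (this is essentially the content of the temperedness estimates a), b), c) of Lemma \ref{abc} transported to $M_P$; one reads off the Siegel-set estimate b) for $M_P$ from the one for $G$ restricted to $\s_{P}$, since the relevant Weyl chamber data for $M_P$ is $\a_0^{+,P}$). Now $Q\subset P$ gives a standard parabolic $Q\cap M_P$ of $M_P$. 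Applying Proposition \ref{subtemp} to $M_P$ and the form $\phi^{M_P,k}$, the exponents of its constant term along $Q\cap M_P$ are subunitary. Comparing with (\ref{trans}) — which identifies $(\phi_Q)^{M_Q,k}$ with $[((\phi)^{M_P,k})_{Q\cap M_P}]^{M_Q,1}$ — and tracking how the twist by $\rho_Q$ versus $\rho_{Q\cap M_P}+\rho_P|_{\a_Q}$ enters, one gets that the exponents of $\phi_Q$ (as an element of $\cA_Q(G)$, with the $\rho_Q$-normalization built into $\cA^n_Q(G)$) are subunitary. Hence, as in Definition \ref{defctw} applied relative to $M_P$, the decomposition (\ref{1}) of $\phi_Q$ has a well-defined ``unitary part'', and we set $\phi_Q^w$ to be the sum of those terms.

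\textbf{Step (ii).} Here $\phi\in\cA^{temp}(G)$. By Proposition \ref{subtemp}, $\phi_P$ has subunitary exponents, and $\phi_P^w$ is by definition the sum of the terms with \emph{unitary} exponents. We must show $\phi_P^w\in\cA_P^{temp}(G)$. The point is that $\phi_P^w$, via the identity $\phi_P^w=\sum_{\lambda\in\cE_P^w(\phi)}e^{\lambda(H_P(\cdot))}\phi_{P,0,\lambda}$ with $\lambda$ unitary, satisfies the growth estimate b) of Lemma \ref{abc}: on a Siegel domain $\s_P$ one has $|e^{\lambda(H_P(g))}|=1$ for $\lambda$ unitary, and each $\phi_{P,0,\lambda}(g)$ is an $S(\a_P^*)\otimes\cA^n_P(G)$-term, so is bounded by $e^{\rho_P(H_P(g))}$ times a polynomial in $\|H_P(g)\|$, which using (\ref{ssg}) on $\s_P$ gives exactly the bound in Lemma \ref{abc}b) for some $d$. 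The same works for all $U(\g_\infty)$-derivatives since differentiation preserves the shape of (\ref{1}). One should be a little careful that $\phi_P^w$ really is an automorphic form (finiteness under $\mathfrak z$ and $K$, moderate growth) — this is immediate since it is a sub-sum of the finite exponential-polynomial expansion of the automorphic form $\phi_P$ along $\a_P$, and each piece $e^{\lambda(H_P(\cdot))}\phi_{P,0,\lambda}$ is automorphic.

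\textbf{Steps (iii) and (iv).} These are the combinatorial heart, and the only genuine obstacle: one must check that ``taking the unitary part'' is transitive. For (iii): by (\ref{trans}), $\phi_Q=(\phi_P)_Q$. Decompose $\phi_P=\phi_P^w+\phi_P^-$. Taking constant terms along $Q$ and then along $\a_Q$, the key claim is that the unitary-exponent part of $(\phi_P^-)_Q$ vanishes, while the unitary part of $(\phi_P^w)_Q$ equals $\phi_Q^w$. The first claim follows because an exponent $\lambda$ of $\phi_P^-$ restricted to $\a_P$ is \emph{strictly} subunitary (non-unitary subunitary), and the exponents of $(\phi_P^-)_Q$ along $\a_Q$ are obtained by restriction-type operations from exponents of $\phi_P^-$; one argues via Lemma \ref{abc}c) / estimate (\ref{mw}) (as in the proof of Proposition \ref{subtemp}) that a strictly subunitary growth rate in the $\a_P$-direction forces strictly subunitary, hence non-unitary, behaviour of all resulting exponents of $(\phi_P^-)_Q$ on $\a_Q\cap$(the relevant chamber), so none of them is unitary. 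The second claim: $(\phi_P^w)_Q$ has subunitary exponents by step (i) applied to $\phi_P^w\in\cA_P^{temp}(G)$ (which is (ii)), and one checks that its unitary part coincides term-by-term with the unitary part $\phi_Q^w$ of $\phi_Q=(\phi_P)_Q$, because the non-unitary contributions come precisely from $\phi_P^-$. This gives $\phi_Q^w=(\phi_P^w)_Q^w$. Finally (iv) is the ``$M_Q$-normalized'' restatement of (iii) combined with the second formula in (\ref{trans}): apply the functor $\psi\mapsto\psi^{M_Q,k}$ to the identity of (iii), use (\ref{trans}) to rewrite $(\phi_Q^w)^{M_Q,k}$ in terms of $(\phi_P^w)^{M_P,k}$ and its constant term along $Q\cap M_P$, and observe that the operation of passing to the unitary part commutes with the maps $(\cdot)^{M_P,k}$ and $(\cdot)_{Q\cap M_P}$ — again because these maps act on exponents by the affine/restriction operations whose behaviour on (strict) subunitarity was controlled above.

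I expect the main difficulty to be precisely the bookkeeping in (iii): verifying that the twist conventions ($\rho_P$, $\rho_Q$, $\rho_Q^P$, $\rho_{Q\cap M_P}$) match up so that ``unitary exponent'' means the same thing on both sides of $\phi_Q^w=(\phi_P^w)_Q^w$, and that strictly subunitary exponents of $\phi_P^-$ cannot produce a unitary exponent of $(\phi_P^-)_Q$ — this last point needs the $t\to\infty$ growth argument with estimate (\ref{mw}) exactly as in Proposition \ref{subtemp}, applied now in the $\a_Q$-direction inside $\a_P^+$. Everything else is a direct unwinding of (\ref{trans}) and Lemma \ref{abc}.
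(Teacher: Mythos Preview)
Your argument for (ii) has a genuine gap. You write that ``each $\phi_{P,0,\lambda}(g)$ is an $S(\a_P^*)\otimes\cA^n_P(G)$-term, so is bounded by $e^{\rho_P(H_P(g))}$ times a polynomial in $\|H_P(g)\|$'', and conclude temperedness on $\s_P$ directly from this. But membership in $\cA^n_P(G)$ only pins down the behaviour along $\a_P$ (transformation by $e^{\rho_P}$); it says nothing about the growth in the $\a_0^P$-directions that also enter $\s_P$. An arbitrary element of $\cA^n_P(G)$ has only moderate growth there, not the tempered bound $e^{\rho(H_0(g))}(1+\|H_0(g)\|)^d$ of Lemma~\ref{abc}(b). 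So the bound you assert is not available, and the argument as written is circular: knowing each $\phi_{P,0,\lambda}$ is tempered on $\s_P$ is essentially equivalent to what you are trying to prove. The paper's route is different and avoids this: it observes that the exponents of $(\phi_P^w)_Q$ are among those of $(\phi_P)_Q=\phi_Q$ (since the $\a_P$-restrictions separate the $w$- and $-$-parts), hence are subunitary by Proposition~\ref{subtemp} applied to $\phi$; then Proposition~\ref{subtemp} applied to $M_P$ gives $(\phi_P^w)^{M_P,k}\in\cA^{temp}(M_P)$, and one converts this to $\phi_P^w\in\cA_P^{temp}(G)$ via the $K$-finiteness and the Iwasawa decomposition.

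For (iii) your reasoning is more involved than necessary and contains a terminological slip: ``non-unitary subunitary'' is \emph{not} the same as ``strictly subunitary'' (some coefficients $x_\alpha$ may vanish). You do not need any growth argument via (\ref{mw}). The clean observation, which is what the paper uses, is purely algebraic: any exponent $\mu\in\a_{Q,\C}^*$ of $(\phi_P^-)_Q$, when restricted to $\a_P\subset\a_Q$, is an exponent of $\phi_P^-$ (because the $\a_P$-expansion commutes with the constant term along $Q$); these restrictions are non-unitary by definition of $\phi_P^-$, so $\mu$ itself cannot be unitary. Hence $(\phi_P^-)_Q$ contributes nothing to $\phi_Q^w$, and $\phi_Q^w=(\phi_P^w)_Q^w$ follows. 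Your treatments of (i) and (iv) are fine and match the paper.
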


\begin{proof}
(i) is proved  as Proposition \ref{subtemp} (i). \\
(ii) From (\ref{trans}) and Proposition \ref{subtemp} applied to $\phi$, one sees that the exponents of  $(\phi^w_P)^{k, M_P} $ are subunitary. 
Hence by this Proposition applied to $M_P$, one sees: $\phi_P^{w, M_P,k} \in  \cA^{temp}(M_P)$. 
Let $\Omega_{M_P} $ be a compact subset of $M_P(\A)$. Using $K$-finiteness, this gives that there exists $d \in \N$ such that: 
$$ \vert  \phi_P  ( exp X \omega k) \vert << e^{\rho(X)} ( 1+\Vert X \Vert )^d,  X \in \a_0^{+,P} , \omega \in \Omega _{M_P}, k \in K.$$
Every compact subset of $G(\A)$ is contained in a set of the form $N_P( \A) \Omega' K$ where $\Omega' $ is a compact subset of $M_P(\A)$. 
Hence, recalling the definition of $\cA^{temp}_P (G)$ (cf. Definition \ref{deftemp}), the preceding estimate achieves the proof of (ii).\\
(iii) Write $\phi_P=\phi_P^w + \phi_P^-$. Then none of the exponents  of $(\phi_P^-)_Q $  is  unitary. 
Hence   as $ \phi_Q = (\phi _P)_Q$  we get (iii).\\
(iv) follows from  the second assertion of (\ref{trans}).
\end{proof}

\subsection{A characterization of elements of square integrable automorphic forms}

Let us recall some facts from \cite{B}. 
With the notation there, one can take $dm_X(x)=(\Xi^{[G]})(x)^2dx$ where  $dx$ is the  $G(\A)$-invariant measure on $X= [G]$,  as it follows from the proof of l.c. Lemma 3.3(ii). 
Also $[G]$ is of polynomial growth of rank $d_G$ (cf. the definition of this notion in l.c. p.689) equal to the split rank of $G$ (cf. l.c.  Example 1, p.698). 
Then taking into account  what follows the definition  of polynomial growth in l.c. and the criterion p.685, one gets:
\beq\label{sixi} 
\int_{[G]}  (1+ \sigma_{[G]}(x) )^{-d_G}  \Xi(x)^2 dx <  \infty.
\eeq
It follows from  \cite{MW}, Lemmas I.4.1 and I.4.11 that:
$$\cA^2(G)\subset \cC(G).$$
From this and  the definition  of temperedness above, one has: 
\ber \label{polynomial}
For all $\phi \in \cA^{temp}(G)$ and $ \psi \in \cA^2(G)$, for all $X \in \a_G$, the integral 
$$\int_{ [G]^1 } \phi( g_1 exp X) \overline{\psi} ( ( g_1 exp X) dg_1$$ 
is absolutely convergent. 
It is denoted $(\phi, \psi)_G^X$.  
Moreover $X \mapsto (\phi, \psi)_G ^X$ is an exponential  polynomial in $X$.  
One defines similarly for $\phi \in \cA^2_P(G), \psi \in \cA^{temp}_P(G)$,  an exponential  polynomial, $p_P( \phi,\psi)$ on $\a_P$ by 
$p_P(\phi, \psi)(X)= (\phi  , \psi )^X_P $,  $X\in \a_P$, using integration on  $[G]_P^1$. 
\eer
We denote by $ \cA^{temp,c}(G)$ the  space of $ \psi\in \cA^{temp}(G)$ such that  for all $\phi \in \cA^2(G) $, the polynomial $p_P(\phi, \psi)$ is zero. 
We define similarly $\cA^{temp,c}_P(G)$. 
Then one has a direct sum: $\cA^{temp,c}_P(G) \oplus \cA^2_P(G)$ and one can define, for  $\phi= \phi_1+ \phi_2\in\cA^{temp,c}_P(G)\oplus\cA^{2}_P(G)$ and 
$\psi \in \cA^{temp}_P(G)$, an exponental  polynomial denoted $p_P(\phi,\psi)$ equal to $p_P(\phi_2, \psi)$.\\ 
With these definitions one has:

\begin{lem}\label{tempexp} 
(i) Let $Q$ be a standard parabolic subgroup of $G$ and  let  $\phi \in  \cA^{temp}_Q(G) \cap \cA_Q^n(G)$ such that $\phi_P^w= 0$ for any standard parabolic subgroup 
of $G$ with $P \subset Q$,  $Q\not=P$,  $P$ standard  then $\phi \in  \cA^{2}_Q(G)$.\\
(ii) If $ \phi \in \cA^{temp}(G) $ and  $\phi_Q^{w}\in  \cA_P^{temp, c}(G)$ for all  standard parabolic subgroup $Q$  of $G$, then $\phi=0$.
\end{lem}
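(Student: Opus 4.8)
The plan is to prove (i) first and then deduce (ii) by an induction on standard parabolic subgroups, ordered by inclusion. For (i), the point is a known characterization of square-integrability in terms of exponents: an element $\phi$ of $\cA^{temp}_Q(G)\cap\cA^n_Q(G)$ lies in $\cA^2_Q(G)$ if and only if, for every standard $P\subsetneq Q$, the exponents of the constant term $\phi_P$ that are ``relevant'' (i.e. the unitary ones along $\a_P^Q$) actually contribute nothing — this is essentially Langlands' square-integrability criterion as recorded in \cite{MW}, Lemma I.4.11. So the plan is: apply Lemma \ref{transw}(i) to see that the exponents of $\phi_P$ are subunitary, observe that $\phi_P^w=0$ means precisely that there are no unitary exponents along $\a_P$, hence (combined with $\phi\in\cA^n_Q(G)$, which fixes the $\a_Q$-behaviour to be exactly $\rho_Q$) every exponent of $\phi_P$ is strictly subunitary as a form on $\a_P^Q$. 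Then the growth estimate in property b)/c) of Lemma \ref{abc}, refined using \cite{MW}, Lemma I.2.10 in the strengthened form (\ref{mw}), shows that $\phi$ decays like $e^{\rho_Q(H)}$ times something integrable against the measure $e^{-2\rho_Q}$ on $[M_Q]$, i.e. $\|\phi\|_Q<\infty$. Concretely one integrates over a Siegel domain for $[M_Q]$, decomposes the integral over the faces indexed by standard $P\subseteq Q$, and uses that on the face attached to $P$ the integrand is bounded by $e^{2\rho_Q(H)}$ times a negative exponential in the $\Delta_P^Q$-directions times a polynomial, which is integrable.

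For (ii), I would argue by descending induction on $Q$, or equivalently strong induction on the semisimple rank of $M_Q$. Suppose $\phi\in\cA^{temp}(G)$ with $\phi_Q^w\in\cA^{temp,c}_P(G)$ — here I read the hypothesis as $\phi_Q^w\in\cA^{temp,c}_Q(G)$ for every standard $Q$ — and fix $Q$. Apply the transitivity Lemma \ref{transw}(iii), $\phi_R^w=(\phi_Q^w)^w_R$ for $R\subseteq Q$, to transport the hypothesis down: for every standard $R\subsetneq Q$ the weak constant term $(\phi_Q^{w})^{M_Q}{}_R^{w}$ vanishes once we have reduced, via the induction hypothesis applied inside $M_Q$, to the case where $\phi_Q^w$ has no lower weak constant terms at all. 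Then part (i), applied to $\psi:=(\phi_Q^w)^{M_Q,k}\in\cA^{temp}(M_Q)\cap\cA^n_{?}$ for each $k\in K$ (using Lemma \ref{transw}(iv) to identify its lower weak constant terms with those of $\phi^w_{R\cap M_Q}$), shows $\psi\in\cA^2(M_Q)$; hence $\phi_Q^w\in\cA^2_Q(G)$. But $\phi_Q^w\in\cA^{temp,c}_Q(G)$ as well, and $\cA^{temp,c}_Q(G)\cap\cA^2_Q(G)=0$ by the direct-sum property established just before the lemma (the pairing $p_Q(\phi_2,\phi_2)$ is a nondegenerate form on $\cA^2_Q$). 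Therefore $\phi_Q^w=0$ for every standard $Q$, in particular $\phi^w_{P_0}=0$; running Proposition \ref{subtemp} and the exponent bookkeeping, a tempered automorphic form all of whose weak constant terms (including along $P_0$, where it governs the $L^2$-mass) vanish must itself be $0$ — this last implication is again the square-integrability/cuspidality dichotomy: $\phi=\phi^w_G=\phi$ would be cuspidal and square-integrable with no discrete part, hence zero.

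The main obstacle I expect is the bookkeeping in (i): one must carry along the distinction between the $\a_Q$-part (fixed by the $\cA^n_Q$ condition) and the $\a_Q^Q$... rather the $\a_P^Q$-part of each exponent, and check that ``$\phi_P^w=0$ for all $P\subsetneq Q$'' really forces strict subunitarity in all the $\Delta_P^Q$-directions simultaneously rather than just face-by-face. This is where one needs the sharpened form (\ref{mw}) of \cite{MW}, Lemma I.2.10 — with $\beta_P=\inf_{\alpha\in\Delta_0\setminus\Delta_0^P}\alpha$ in place of a single $\alpha$ — to convert the face-wise exponent information into a genuine global decay estimate on a Siegel domain that is integrable against $e^{-2\rho_Q}dm$. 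The transitivity input (Lemma \ref{transw}) makes the reduction in (ii) essentially formal once (i) is in hand, so the real content is the square-integrability criterion packaged in (i).
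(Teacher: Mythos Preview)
Your plan for (ii) is structurally correct and matches the paper: induction on $\dim\a_0^G$, use transitivity (Lemma \ref{transw}) to push the hypothesis down to each $M_P$ and conclude $\phi_P^w=0$ for every proper standard $P$, then apply (i) with $Q=G$ to get $\phi\in\cA^2(G)$, and finish since $\phi=\phi_G^w\in\cA^{temp,c}(G)\cap\cA^2(G)=\{0\}$. Your last sentence (``$\phi=\phi^w_G=\phi$ would be cuspidal\dots'') is confused and unnecessary; the argument is already complete at the previous step.

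The genuine gap is in (i), and it is precisely the obstacle you flag. The implication ``subunitary and not unitary $\Rightarrow$ strictly subunitary'' is \emph{false} for a single $P$: if $\Delta_P=\{\alpha,\beta\}$ and $Re\,\nu=-\alpha$, then $\nu$ is subunitary, not unitary, yet not strictly subunitary. The tool you reach for, (\ref{mw}), does not help: it compares $\phi$ with $\phi_P$ but says nothing about the exponents of $\phi_P$ themselves, and along the ray where $\alpha(X)=0$ inside $\a_P^{+}$ the exponent $-\alpha$ gives no decay whatsoever, so you cannot manufacture an integrable bound on a Siegel domain from this information alone.

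The paper's fix is a short combinatorial step that uses the hypothesis at a \emph{different} parabolic. Write $Re\,\nu=\sum_{\beta\in\Delta_P} x_\beta\,\beta$ with all $x_\beta\le 0$ (this is what subunitary means); if $\nu$ is not strictly subunitary then some $x_\alpha=0$. Let $Q'\supset P$ be the maximal standard parabolic with $\Delta_{Q'}=\{\alpha_{|\a_{Q'}}\}$. Every $\beta\in\Delta_P\setminus\{\alpha\}$ then lies in $\Delta_P^{Q'}$ and vanishes on $\a_{Q'}$, so $(Re\,\nu)_{|\a_{Q'}}=0$: the restriction $\nu_{|\a_{Q'}}$ is a unitary exponent of $\phi_{Q'}$, contradicting $\phi_{Q'}^w=0$. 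Once strict subunitarity is established for every proper $P$, square-integrability is immediate from \cite{MW}, Lemma I.4.11; the hands-on Siegel-domain integration you outline is not needed.
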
 

\begin{proof} 
(i)  We first prove the result for $Q=G$. 
Let us show that for any standard parabolic subgroup of $G$, $P\not=G$,  the exponents of $\phi_P$ are strictly subunitary. Let $\nu $ be such an exponent. 
From the hypothesis it is subunitary but  not unitary. If it is not strictly subunitary, there exists $ \alpha \in \Delta_P$ such that: 
$$Re\nu= \sum_{ \beta\in \Delta_P\setminus \{\alpha\} } x_{ \beta } \beta, x_\beta \leq 0.$$
Let $Q $ be the maximal parabolic subgroup  of $G$,  containing $P$, such that $\Delta_Q= \alpha_{ \vert \a_Q} $. 
Then $\nu_{ \vert \a_Q}$  is an exponent of $\phi_Q$. But it is clear that it is unitary, hence $\phi_Q^w$ is non zero which contradicts our hypothesis. 
Hence $\nu$ is strictly subunitary. 
Then (i) for $Q=G$ follows from Lemma I.4.11 of \cite{MW}.\\
Then the statement of (i) follows  from Lemma \ref{transw} (iv) and what we have just proved  for $M_Q$ instead of $G$.\\ 
Let us prove  (ii) by induction on the dimension on $\a_0^G$. If it is zero the claim is clear.  
Suppose now $dim \a_0^G >0$. By applying the induction hypothesis to $M_P$ for a strict standard parabolic subgroup $P$ of $G$ and Lemma \ref{transw}, 
one sees that $\phi_P^w=0$. 
Hence by (i), $\phi\in \cA^2(G)$.  As  $\phi$ is in $\in \cA^{temp,c} (G)$, one deduces from this  that $\phi=0$. 
\end{proof}

\section{Uniform temperedness of Eisenstein series}

\subsection{Exponents of Eisenstein series} \label{expo}

Let $P$ be a standard parabolic subgroup of $G$. Let $\phi \in \cA^2_P(G)$.  Let $E_P(., \phi, \lambda)$ be the Eisenstein series (cf. \cite{BL}, (1.1)). 
Let $P \subset Q$ be two standard  parabolic subgroups of $G$ and $w\in W(P\vert Q)$. 
One has the  operators $M(w, \lambda): \cA^2_P(G) \to \cA^2_Q(G)$ meromorphivc in $\lambda \in \a_{P, \C}^*$ ( cf. l.c. section 1.1, after (1.1).\\
One can define $E^Q_P (., \phi, \lambda)$ which is, when defined, in $\cA_Q (G)$ and is characterized by:
\beq \label{epq} 
E^Q_P (., \phi, \lambda)^{M_Q,k} = E_{P \cap M_Q}( .,  \phi^{ M_Q,k}, \lambda), k \in K.
\eeq 
They are analytic on the imaginary axis (cf. \cite{BL} Remark 1.3). 
We recall   the formula  for generic $\lambda $ (cf. \cite{L1}, Proposition 4, with the notation there)
\beq \label{eq} 
(E_P(., \phi, \lambda))_Q = \sum_{s \in W(Q\backslash  G /P)}  E _{Q_s}^Q (.,  M(s, \lambda) \phi_{P_s} ,s\lambda). 
\eeq 
The exponents of $E_P(., \phi, \lambda)_Q$ are given by \cite{L1}, equation  (13). 
Moreover they are subunitary by l.c. Lemma 6 for $\lambda $ unitary. 
Hence by Proposition \ref{subtemp}, $E_P(., \phi, \lambda)$  is  tempered for $\phi \in \cA^2_P(G), \lambda\in i\a_P^* $  and  the  weak constant term is given by: 
\beq \label{eqw} 
(E_P(., \phi, \lambda))^w_Q= \sum_{s \in W(P, Q)}  E _{Q_s}^Q (.,  M(s, \lambda) \phi, \lambda )
\eeq 
which is holomorphic in a neighborhood of $i \a_P^*$. 
The same is true for $E^-_Q(., \phi, \lambda)= E_Q(., \phi, \lambda)  -E_Q^w (., \phi, \lambda) $ whose exponents are contained in 
$$\cE_Q^-( \lambda)= \cup_{s\in W(Q\backslash \ G/P)\setminus W(P, Q) } \{ s ( \cE_{P_s}( \phi) +\lambda)_{\vert \a_Q}\}$$ 
Hence, by analyticity, this inclusion holds for all $\lambda$ in $i \a_P^*$. 
This implies:
\ber\label{real}
For  $\lambda \in i\a_P^*  $, $X\mapsto E_Q^-( exp X g), X \in \a_Q$ is an exponential polynomial with exponents in  a multiset  $\tilde{\cE}_Q ^{-} (\lambda) $, 
where  $\tilde{\cE}_Q ^{-}(\lambda)$  is built  from $\cE_Q^-( \lambda)$ by some repetitions, the multiplicities  depending on the multiplicities of the exponents of $ \phi$. 
Moreover the  real parts of the exponents above do not depend on $\lambda \in i\a_P^*$.
\eer 
%[\ber Let $P\subset Q, \phi \in \cA_P(G)$ has $\mu \in a_{P, \C}^* $ of degree $d$ as an exponent, $E^Q_P( \phi, \lambda) $  has ( generically ) the exponent $ (\mu+ \lambda) _{ \vert  \a_Q}$ with degree $\leq d$\eer One writes  write polynomials on $\a_P$ as a sum of  a products of a f polynmial  $p$  on $\a_Q$ and$ q$ on $\a^P_Q$    $E^Q_G (  , pq\phi_\mu, \lambda) )$ Enfin pour le moment c'est vrai  pour $Q=G$  On calcule la fonction $E$ en $exp X g^1$.   Le polynome $p$ sort.  \ber  $M(w, \lambda)  \psi \in \cA_Q{G}$ , where $Q = wPw^(-1)$, has exponent  $w\lambda +w \mu$  with degree $\leq d$ \eer That is because  interwining integrals intertwine $\pi$ and $\w\pi$.]

\subsection{Uniform temperedness of Eisenstein series}

Let $\Lambda $ be a compact subset of $i\a_P^*$. 
\ber 
Let $\mu \in \a_0^{G, +} $, $n\in \N$.  
Let  $\cF_{\Lambda,  \mu,  n  }$ be the space of functions $F $ on $G(\A) \times \Lambda$ which satisfy  for every compact subset $\Omega $ of $G(\A)$ and 
$u\in U( \g_\infty)$: 
$$ \vert R_uF( exp X \omega, \lambda) \vert << (1+ \Vert X \Vert )^n e^{\mu (X )},  X \in  \a_0^+, \omega\in \Omega, \lambda \in \Lambda,$$ 
$$ F( exp X g, \lambda) =e^{\lambda(X)} F(g, \lambda), X \in \a_G, g \in G( \A), \lambda \in \Lambda. $$
\eer 

\begin{prop} \label{uni} 
Let $E: [G]\times \Lambda \to \C$ be defined by: $ E(g, \lambda)= E_P(g, \phi, \lambda)$. 
Then  there exists $n \in \N$ such that 
$$ E \in \cF_{\Lambda, \rho, n}$$
\end{prop}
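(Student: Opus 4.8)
The plan is to bound the Eisenstein series and its derivatives on a Siegel domain uniformly in the unitary parameter $\lambda\in\Lambda$, using the control of growth of an automorphic form by the exponents of its constant terms, together with the explicit description of those constant terms recalled in Section \ref{expo}. First I would fix a standard parabolic $Q$ and work on the Siegel domain $\s_Q$; by \eqref{siegel} it suffices to estimate $E_P(\exp X\,\omega,\phi,\lambda)$ for $X\in\a_0^{+,Q}$, $\omega$ in a fixed compact set, uniformly in $\lambda\in\Lambda$. The derivatives by $U(\g_\infty)$ cause no extra trouble: since $\lambda$ ranges over a compact set and $M(w,\lambda)$, $E^Q_{Q_s}(\cdot,\cdot,s\lambda)$ are holomorphic near $i\a_P^*$ (as recalled after \eqref{eqw}), right translates and $U(\g_\infty)$-derivatives of $E$ are again of the same shape with $\phi$ replaced by finitely many $K$-translates/derivatives lying in a bounded subset of $\cA^2_P(G)$; so it is enough to bound $E$ itself, uniformly over such a bounded family.

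The core is an induction on $\dim\a_0^G$ (equivalently on the semisimple rank), mirroring the proof of \cite{MW}, Lemma I.2.10 and its variant \eqref{mw}. Along a maximal parabolic $Q\subsetneq G$ one writes, via \eqref{eq}, the constant term $(E_P(\cdot,\phi,\lambda))_Q=\sum_{s} E^Q_{Q_s}(\cdot,M(s,\lambda)\phi_{P_s},s\lambda)$; by the induction hypothesis applied inside $M_Q$ (using \eqref{epq} and \eqref{eqw}), each summand, hence $(E_P(\cdot,\phi,\lambda))_Q$, is bounded on a Siegel domain of $[M_Q]$ by $e^{\rho(H_0)}(1+\|H_0\|)^n$ times the exponential in the $\a_Q$-direction dictated by its exponents — and by \cite{L1}, equation (13) and Lemma 6 (recalled in Section \ref{expo}), those exponents are subunitary, uniformly in $\lambda\in\Lambda$, with real parts independent of $\lambda$ (cf. \eqref{real}). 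Then the difference $E_P(\cdot,\phi,\lambda)-(E_P(\cdot,\phi,\lambda))_Q$ decays, along $\a_Q^+$, faster than $e^{\rho}$ by \eqref{mw} b) and c) applied to the (uniform in $\lambda$ and in the bounded family) moderate-growth bound for $E$; combining, one propagates the bound $|E_P(\exp X\,\omega,\phi,\lambda)|\ll e^{\rho(X)}(1+\|X\|)^n$ from the walls into the interior of $\a_0^{+,G}$. A separate, elementary argument handles the central direction $\a_G$: there $E(\exp X g,\phi,\lambda)=e^{\lambda(X)}E(g,\phi,\lambda)$ with $\lambda\in i\a_P^*$ purely imaginary, giving the exact equivariance demanded in the definition of $\cF_{\Lambda,\rho,n}$ and contributing a bounded factor.

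The main obstacle is making every constant in the estimates uniform in $\lambda\in\Lambda$: one must check that the implied constants coming from \eqref{mw}, from the moderate-growth bound \eqref{unifm} for $E$, and from the inductive bounds on constant terms can all be chosen independent of $\lambda$ over the compact set $\Lambda$ (and over a bounded family of data $\phi$). This rests on two inputs available to us — the holomorphy in a neighborhood of $i\a_P^*$ of the intertwining operators and of the truncated constant terms $E^Q_{Q_s}(\cdot,\cdot,s\lambda)$, which gives local boundedness hence, by compactness of $\Lambda$, uniform boundedness of the data entering the recursion; and the fact, from \eqref{real}, that the real parts of the relevant exponents are constant in $\lambda$, so the exponential weights in the inductive bounds do not drift. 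Granting these, the induction closes and yields the required $n$ with $E\in\cF_{\Lambda,\rho,n}$. One last routine point: translating the final bound on $\s_G$ back to a bound of the form required on $G(\A)\times\Lambda$ uses \eqref{xh} to replace $X$ by $H_0(g)$ up to a bounded error, exactly as in the proof of Lemma \ref{abc}.
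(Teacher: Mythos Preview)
Your plan is correct and is essentially what the paper does: it invokes a uniform-in-$\lambda$ version of \cite{MW}, Lemma~I.4.1 (which you are effectively unpacking via induction and the variant \eqref{mw} of Lemma~I.2.10), fed by the explicit constant-term formulas \eqref{epq}, \eqref{eq} and holomorphy on the imaginary axis. The one input you leave implicit and should name explicitly is the uniform-in-$\lambda$ \emph{moderate} growth bound for $E_P(\cdot,\phi,\lambda)$ needed to start the recursion (your reference to \eqref{unifm} is for a single form, not a family); the paper takes this from \cite{BL}, Corollary~6.5.
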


\begin{proof}
We will need the fact that lemma 1.4.1 of \cite{MW} holds uniformly for a set of automorphic forms which is bounded in a space of functions with given moderate growth 
and whose constant terms uniformly satisfy the assumption of that lemma. This is easy to see from the proof given in  \cite{MW}. 
This applies to Eisenstein series from \cite{BL}, Corollary 6.5, from the holomorphy of Eisenstein series on the imaginary axis and from  (\ref{epq}), (\ref{eq}).  
%The proof is entirely analogous   to  the proof of Proposition \ref{subtemp}, using (\ref{real}) and (\ref{eqw}) to use the induction hypothesis. Notice the case  $dim \a_0^G=0$ is trivial.   For higher rank,  to start, we need to find an analogue of (\ref{pxo}) which follows easily of   \cite{BL}, Corollary 6.5 and the compacity of $\Lambda$. 
\end{proof} 

Let us show:
\ber \label{etempc}
If $P \not =G$, $E_P(.,\phi, \lambda ) \in \cA^{temp,c}(G)$ for all $\phi \in \cA^2_P(G), \lambda \in i\a_P^*$.
\eer
Let $X\in \a_G$ and us look to 
$$I(\lambda)=  \int_{[G]^1}E(xexpX, \phi, \lambda) \overline{\psi}(xexpX)  dx, \lambda \in i\a_P^*$$
for $\psi \in \cA^2(G)$. 
From the uniform temperedness of Eisenstein series, it is a continous function in $\lambda$. 
Let $z$ be an element of the  center $Z([\g_\infty, \g_\infty])$ of the envelopping algebra of $[\g_\infty, \g_\infty ]$. 
Let us assume that $z^*$ is in the cofinite dimensional ideal of $Z([\g_\infty, \g_\infty] ) $ which annihilates $\psi$. 
One can assume that $\phi$ is $Z(\g_\infty)$ eigen  and let $p_z$ be the polynomial on $i\a_P^*$ such that 
$R_z E_P(., \phi, \lambda) = p_z( \lambda) E_P(., \phi, \lambda), \lambda \in i\a_P^*$. Then on one hand:
$$\int_{[G]^1}R_z E(xexp X, \phi, \lambda) \overline{\psi}(xexp X)  dx, \lambda \in i\a_P^*= p_z( \lambda)I(\lambda)$$ 
and on the other hand, taking adjoint,  this integral is zero.\\
Moreover by the cofinite dimension of the annihilator in $Z([\g_\infty, \g_\infty] )$  of $\psi$, there exists $z$ as above  such that $p_z$ in non identically zero. 
Then it follows, by continuity and density,  that $I(\lambda)$ is identically zero. This proves our claim.

\section{Wave packets}

\subsection{Difference of a tempered automorphic form with its weak constant term}

Let $Q$ be a parabolic subgroup of $G$. 
For $\delta>0$,  we define: 
$$ \a^{G,+}_{Q, \delta}= \{X \in \a_Q^{G,+}\vert  \alpha(X) \geq  \delta \Vert X \Vert , \alpha \in \Delta_Q\}.$$

\begin{lem}\label{diff1} 
Let $\phi \in \cA^{temp}(G)$. Let $\Omega $ be a compact subset of $G$. 
Let $\delta>0$. Then there exists $ \ep>0$  such that: 
$$\vert (\phi- \phi^w_Q )\vert  ( n_Q exp X  exp Y  \omega)  <<   (1+ \Vert X \Vert )^d e^{\rho(X)}  e^{\rho (Y) -\ep\Vert Y \Vert } ,$$ 
$$ n_Q \in N_Q( \A),  X \in \a_0^+, Y \in \a^{G,+}_{Q, \delta}, \omega \in \Omega.$$
\end{lem}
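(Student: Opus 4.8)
The plan is to estimate $\phi - \phi^w_Q$ using the decomposition of $\phi_Q$ into exponents and the transitivity of the weak constant term (Lemma~\ref{transw}), reducing everything to a statement about exponential polynomials along the $\a_Q$-direction whose exponents are strictly subunitary on $\a^{G,+}_{Q,\delta}$. Let me think about how to set this up.

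First I would reduce to controlling $\phi - \phi_Q$ and $\phi_Q - \phi_Q^w$ separately on the relevant region. For the first difference, the point $n_Q\exp X\exp Y\omega$ with $Y$ deep in the cone $\a^{G,+}_{Q,\delta}$ lies in a Siegel-type region for $Q$ where the constant term approximation is valid with exponential decay: invoking the variant \eqref{mw} of \cite{MW}, Lemma~I.2.10 (applied to right translates of $\phi$, using \eqref{unifm}), one gets $|\phi - \phi_Q|(n_Q\exp X\exp Y\omega) \ll (1+\|X\|)^d e^{\rho(X)} e^{\rho(Y) - k\|Y\|}$ for any $k>0$, since $Y$ stays away from the walls by the factor $\delta$. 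For the second difference, $\phi_Q - \phi_Q^w = \phi_Q^-$ consists precisely of the terms in \eqref{1} with non-unitary (hence, by Proposition~\ref{subtemp} applied to $\phi$, strictly subunitary on the $\delta$-cone) exponents $\lambda$; writing $\phi_Q^-(n_Q\exp X\exp Y\omega) = \sum_{\lambda\in\cE_Q^-(\phi)} e^{\lambda(Y)}(\phi_{0,\lambda})(n_Q\exp X\exp Y\omega)$, each $\mathrm{Re}\,\lambda$ is a strictly negative combination $\sum_{\alpha\in\Delta_Q} x_\alpha\alpha$ with $x_\alpha<0$, so $\mathrm{Re}\,\lambda(Y) = \sum x_\alpha\alpha(Y) \le -c\,\delta\|Y\|$ for some $c>0$ depending only on the finite set of exponents, giving exactly the $e^{-\ep\|Y\|}$ gain.

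The remaining work is to control the "coefficient" part uniformly. Each $\phi_{0,\lambda}\in S(\a_Q^*)\otimes\cA_Q^n(G)$, so $(\phi_{0,\lambda})(n_Q\exp X\exp Y\omega)$ is polynomial in $Y$ (absorbed into $e^{-\ep'\|Y\|}$ after shrinking $\ep$) times a value of an $\cA_Q^n(G)$-element, which by Lemma~\ref{transw}(i) applied to $M_Q$ — or rather by temperedness of $\phi_Q$ and the characterization Lemma~\ref{abc} — is bounded by $(1+\|X\|)^{d'} e^{\rho(X)}$ uniformly for $\omega$ in the compact set $\Omega$ (after enlarging $\Omega$ to swallow the $n_Q$ and $\exp Y$ conjugation effects; note conjugating $\Omega$ by $\exp(-Y)$ and $\exp(-X)$ with $X,Y$ in positive cones keeps things in a compact set, as in the proof of \eqref{siegel}). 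Combining the two estimates and taking $\ep$ to be the minimum of $k$ and the constant arising from the strict subunitarity margin $c\delta$ yields the claim.

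The main obstacle I expect is the bookkeeping of the $N_Q(\A)$-factor and the interplay between the $\a_0^+$-direction of $X$ and the $\a_Q^{G,+}$-direction of $Y$: one must check that $n_Q\exp X\exp Y\omega$ genuinely lies in (a translate of) a Siegel domain adapted to $Q$ so that \eqref{mw} applies with the decomposition $\exp X\exp Y$ playing the roles of the "$g\in\s_G\cap G(\A)^1$" and "$a\in A$" parts in \cite{MW}, Lemma~I.2.10 — here $X$ is not in $\a^G$ in general, so one should split $X = X_G + X^G$ and fold $X_G$ together with $Y$ (both contributing to the $A_Q$-direction). Once that geometric setup is pinned down, the estimate is a combination of the constant-term approximation with exponential decay and the elementary bound on exponential polynomials with strictly negative exponents on the $\delta$-cone, exactly parallel to what is done for real symmetric spaces in \cite{CD}, \cite{D2}.
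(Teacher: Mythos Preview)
Your overall strategy --- split $\phi - \phi_Q^w$ into $(\phi - \phi_Q) + (\phi_Q - \phi_Q^w)$ and handle each piece separately --- is sound and matches the paper's approach in spirit. Your treatment of $\phi - \phi_Q$ via \eqref{mw} is correct, and in fact more explicit than the paper, which concentrates on $\phi_Q - \phi_Q^w$ and leaves the passage from $\phi$ to $\phi_Q$ implicit.

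There are two issues with your treatment of $\phi_Q - \phi_Q^w$.

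First, a minor slip: you claim that each non-unitary subunitary exponent has $\mathrm{Re}\,\lambda = \sum_{\alpha\in\Delta_Q} x_\alpha \alpha$ with $x_\alpha < 0$ for \emph{all} $\alpha$. That is false; Proposition~\ref{subtemp} only gives $x_\alpha \le 0$, with at least one strict inequality (since $\lambda$ is non-unitary). Your conclusion $\mathrm{Re}\,\lambda(Y) \le -c\,\delta\|Y\|$ still follows on the $\delta$-cone, because all $\alpha(Y) \ge \delta\|Y\|$ there, but the justification needs this correction.

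Second, and this is the genuine gap: you assert that each coefficient $\phi_{0,\lambda}$, evaluated at $\exp X\,\omega$ after stripping the $Y$-dependence, is bounded by $(1+\|X\|)^{d'}e^{\rho(X)}$, citing ``temperedness of $\phi_Q$''. But temperedness of $\phi_Q$ only bounds the \emph{sum} $\sum_\lambda e^{\lambda(\cdot)}\phi_{0,\lambda}$, not the individual terms; the pieces $\phi_{0,\lambda}$ live in $S(\a_Q^*)\otimes\cA_Q^n(G)$ and there is no a priori reason each should be tempered on $M_Q$. Extracting a uniform bound on each term from the bound on the whole requires an additional argument. The paper supplies it as follows: parametrize $Y = tY_0$ with $Y_0$ on the compact unit sphere $S_{Q,\delta}$; obtain the tempered bound for the one-variable exponential polynomial $t \mapsto (\phi_Q - \phi_Q^w)(\exp X\,\exp tY_0\,\omega)$ on a small interval $[-\ep'',\ep'']$ (by enlarging $\Omega$); observe that its exponents $\mu(Y_0)+\rho(Y_0)$ satisfy $\mathrm{Re}\,\mu(Y_0) \le -\ep'$ uniformly in $Y_0 \in S_{Q,\delta}$; and then invoke Lemma~3 of \cite{L1} --- a uniform estimate for one-variable exponential polynomials in terms of their exponents and their supremum on a fixed interval --- to propagate the bound to all $t > 0$ with the desired exponential decay. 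This Vandermonde-type device is exactly what your argument is missing; once you insert it (or an equivalent finite-dimensional linear-algebra argument separating the finitely many exponents), your proof and the paper's coincide.
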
 

\begin{proof}
Using that  conjugation by $exp-X$ and $exp-Y$ contracts $N_Q(\A)$ and as $N_Q (F)\backslash N_Q (\A)$ is compact, possibly changing $\Omega$, one is  reduced to prove a similar claim, but without $n_Q$. \\
Let $S_{Q,\delta} $ be the intersection  of  the unit sphere of $\a_Q$  with $ \a^{G,+}_{Q, \delta}$. It is compact. 
Let us look  to the family of exponential polynomials in $t\in \R$: 
$$p_{Y,X, \omega}(t):=\phi_Q( exp X exptY \omega)-\phi_Q^w (exp X expt Y\omega ), X \in \a_0^+, Y \in S_{Q, \delta}, \omega \in \Omega.$$
On one hand,  from the definition of temperedness of $\phi$ and (\ref{mw}) b) and c),  one gets that there exists $d \in \N$ such  that: 
$$\vert \phi_Q( expX exp tY\omega)\vert << (1 + \Vert X\Vert )^d  e^{ \rho(X)} (1+t)^d e^{t\rho(Y)} , X \in \a_0^+, Y \in S_{Q, \delta}, \omega \in \Omega, t >0.$$
On the  other hand  the temperedness of $\phi_Q^w$ (cf. Lemma \ref{transw}) and  the definition of the temperedness of $\phi_Q^w$ implies a similar bound for 
$\phi_Q^w (exp X expt Y\omega )$. 
Hence by difference it follows  that there exists $d \in \N$ such that: 
\beq \label{pyx} 
\vert p_{Y,X, \omega}(t)\vert   << (1 + \Vert X\Vert )^d  e^{ \rho(X)} (1+t)^d e^{t\rho(Y)} , X \in \a_0^+, Y \in S_{Q, \delta},  \omega \in \Omega, t >0. 
\eeq 
Moreover the exponents of these exponential polynomials  are  equal to $\mu(Y) + \rho(Y)$ where $ \mu$ is an exponent of $\phi_Q$  which is not imaginary. 
Hence its real part is equal to $\sum_{\alpha \in \Delta_Q} c_\alpha \alpha (X) $ with $Re c_\alpha\leq  0$, with at least one $Re c_\alpha$ non zero. 
Hence there exists $\ep'>0$  such that for $Y \in S_{Q, \delta}$,  $\mu(Y) < -\ep'$. \\
By applying  (\ref{pyx}) to $\Omega'$ such that $\Omega'$ contains $ \{exp tY \vert  \vert t \vert <\ep'', Y \in S_{Q, \delta}\} \Omega$, 
one gets  that the modulus  of these polynomials restricted to the interval $[-\ep'',  \ep'']$  is bounded by a constant times $(1 + \Vert X\Vert )^d  e^{ \rho(X)}$. 
Applying  Lemma 3 of \cite{L1} to the polynomials $ [(1 + \Vert X\Vert )^d  e^{ \rho(X)}]^{-1} p_{Y, X , \omega}$, one gets the required estimate. 
\end{proof}

\begin{lem}\label{diff}
Let $\phi \in \cA^{temp}(G) $. 
Let $\delta >0$ and  $\a^{G+}_{0, Q, \delta}:=\{ X \in \a^{G+}_0\vert \alpha(X) \geq \delta \Vert X\Vert, \alpha \in \Delta_{P_0} \setminus  \Delta_{P_0}^Q\}$. 
Let $\Omega$ be a compact subset of $G(\A) $.\\  Let $\phi \in \cA^{temp}(G)$.  
There exists $\ep>0$ such that   
$$ \vert \phi (n_Q exp X \omega) -\phi_Q^w  (n_Q exp   X \omega)\vert <<  e^{\rho(X) - \ep \Vert X \Vert }, n_Q \in N_Q( \A) , X \in \a^{G+} _{0, Q, \delta} , \omega \in \Omega.$$
\end{lem}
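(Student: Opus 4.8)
The plan is to deduce Lemma \ref{diff} from the more precise Lemma \ref{diff1} by a covering argument on the closed Weyl chamber $\a_0^{G+}$, together with the transitivity of the weak constant term (Lemma \ref{transw} (iii)). The point is that in Lemma \ref{diff1} the variable splits as $\exp X\exp Y$ with $X\in\a_0^+$ arbitrary and $Y$ confined to the ``$\delta$-regular'' cone $\a^{G,+}_{Q,\delta}$ of $\a_Q$, whereas here we have a single variable $X\in\a^{G+}_{0,Q,\delta}$, i.e. a point of the big chamber that is $\delta$-regular in the directions of $\Delta_{P_0}\setminus\Delta_{P_0}^Q$ but may be arbitrarily close to the walls indexed by $\Delta_0^Q$. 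First I would reduce, exactly as at the start of the proof of Lemma \ref{diff1}, to the case without $n_Q$: conjugation by $\exp(-X)$ contracts $N_Q(\A)$ (since $X$ is $\delta$-regular in the $\Delta_Q$-directions its $\a_Q$-component grows proportionally to $\|X\|$), and $N_Q(F)\backslash N_Q(\A)$ is compact, so up to enlarging $\Omega$ we may drop $n_Q$.

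Next I would split $X=X^Q+X_Q$ with $X^Q\in\a_0^{Q}$ and $X_Q\in\a_Q$. The $\delta$-regularity hypothesis $\alpha(X)\ge\delta\|X\|$ for $\alpha\in\Delta_{P_0}\setminus\Delta_{P_0}^Q$ forces $X_Q$ to lie in $\a^{G,+}_{Q,\delta'}$ for a $\delta'$ comparable to $\delta$, and moreover $\|X_Q\|\gtrsim\|X\|$; meanwhile $X^Q$ lies in the closed chamber $\a_0^{Q,+}$ of $M_Q$ with no lower bound. Writing $\exp X=\exp X^Q\exp X_Q$ and applying Lemma \ref{diff1} with the roles ``$X$'' $\leftarrow X^Q$ and ``$Y$'' $\leftarrow X_Q$ (legitimate since $X^Q\in\a_0^+$ — note $\a_0^{Q,+}\subset\a_0^+$ is false in general, so one must be a little careful and instead run the estimate of Lemma \ref{diff1} relative to $M_Q$, or observe that $X^Q$ can be moved into $\a_0^+$ at the cost of a compact factor absorbed into $\Omega$), one gets
$$\vert\phi-\phi^w_Q\vert(\exp X^Q\exp X_Q\,\omega)\ll(1+\|X^Q\|)^d e^{\rho(X^Q)}e^{\rho(X_Q)-\ep\|X_Q\|}.$$
Since $\rho(X^Q)+\rho(X_Q)=\rho(X)$, $\|X_Q\|\gtrsim\|X\|$, and the polynomial factor $(1+\|X^Q\|)^d\le(1+\|X\|)^d$ is harmlessly dominated by a slightly smaller exponential rate, this yields $e^{\rho(X)-\ep'\|X\|}$ for a new $\ep'>0$, which is the claim.

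The main obstacle is bookkeeping the cone geometry correctly: one must verify that $\delta$-regularity of $X$ in the $\Delta_{P_0}\setminus\Delta_{P_0}^Q$ directions really does translate into $X_Q$ belonging to some $\a^{G,+}_{Q,\delta'}$ with $\|X_Q\|$ bounded below by a multiple of $\|X\|$ — this uses that the restriction map $\a_0^*\to\a_Q^*$ sends $\Delta_{P_0}\setminus\Delta_{P_0}^Q$ onto (positive multiples of) $\Delta_Q$ as recorded in the Notation section — and that the complementary component $X^Q$ genuinely stays in the Weyl chamber data to which Lemma \ref{diff1} applies. A secondary, purely technical point is the matching of the two $n_Q$-reductions and the absorption of the compact correction factors (the $\exp T$-type element from \eqref{siegel}) into $\Omega$; none of this is deep, but it is where an error would creep in. Once the cone estimates are set up, the exponential-polynomial input is entirely supplied by Lemma \ref{diff1}, so no further analysis (no appeal to \cite{L1} Lemma 3, no \eqref{mw}) is needed here.
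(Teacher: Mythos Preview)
Your overall strategy --- reduce to Lemma \ref{diff1} by splitting $X$ into an ``$\a_0$-part'' and an ``$\a_Q$-part'' --- is the right one, and your observations that $X_Q\in\a^{G,+}_{Q,\delta}$ and $\|X_Q\|\gtrsim\|X\|$ are correct. The gap is exactly the point you yourself flag: the orthogonal projection $X^Q$ onto $\a_0^Q$ does \emph{not} lie in $\a_0^+$, and neither of your proposed fixes works. Since $X^Q\in\a_0^{Q,+}$ lies in the cone generated by $\check{\beta}$, $\beta\in\Delta_0^Q$, one has $\alpha(X^Q)\le 0$ for every $\alpha\in\Delta_{P_0}\setminus\Delta_{P_0}^Q$, and this can be of order $-\|X\|$ (already in $SL_3$: with $\Delta_0^Q=\{\alpha_1\}$ one computes $\alpha_2(X^Q)=-\tfrac12\alpha_1(X)$). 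So ``moving $X^Q$ into $\a_0^+$ at the cost of a compact factor'' is impossible: the required correction grows with $\|X\|$. And ``running Lemma \ref{diff1} relative to $M_Q$'' does not make sense here, since the quantity $\phi-\phi_Q^w$ is intrinsically a $G$-level object; replacing $G$ by $M_Q$ would compare $\phi^{M_Q,k}$ to \emph{its} constant terms, which is a different statement.

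The paper avoids this by using a \emph{non-orthogonal} splitting. It takes $Y\in\a_Q^G$ determined by $\alpha(Y)=\alpha(X)$ for all $\alpha\in\Delta_{P_0}\setminus\Delta_{P_0}^Q$ and sets $X'=X-Y$. Then $\alpha(X')=0$ for those $\alpha$, while $\beta(X')=\beta(X)\ge 0$ for $\beta\in\Delta_0^Q$ (since $\beta$ vanishes on $\a_Q$); hence $X'\in\a_0^+$ exactly, and Lemma \ref{diff1} applies directly with $(X',Y)$ in place of $(X,Y)$. The remaining estimates $\|X'\|\ll\|X\|$ and $\|Y\|\gtrsim\|X\|$ follow as you describe. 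In short: the idea is right, but the decomposition must be chosen so that the ``small'' component lands in $\a_0^+$, and orthogonal projection does not do this.
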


\begin{proof}
For  $X \in  \a^{G+}_{0, Q, \delta} $, let  $Y$ be the element of $ \a^G_Q$ such that $ \alpha ( Y)= \alpha(X), \alpha \in \Delta_{P_0} \setminus  \Delta_{P_0}^Q$.  
Then, looking to coordinates in $\a_Q^G$, one sees that  there exists  $\delta_1 >0$ such that $ \delta_1 \Vert Y \Vert \leq  \delta\Vert X \Vert  $.  
Hence $Y \in \a_{Q, \delta_1}^+ $. Moreover  as  $X \in \a^{G+}_{0, Q, \delta}$, $X'= Y-X $ is in $\a_0^{G+} $. 
One gets the required estimate by using the preceding lemma with $X'$ instead of $X$ as: 
$$ \Vert X' \Vert  << \Vert X \Vert + \Vert Y\Vert <<  \Vert X \Vert$$
and  if $ \alpha \in \Delta_{P_0} \setminus \Delta_{P_0}^Q$,  
$$ \delta \Vert X\Vert \leq  \alpha (Y)<< \Vert Y\Vert .$$
\end{proof}

\begin{lem}\label{diffes}
Let  $\Lambda$ be a bounded subset of $i\a^*_P$   and $\phi \in \cA^2_P(G)$. There exists $\ep>0$ such that: 
$$\vert E_P( n_Q  expX \omega, \phi, \lambda) -E_P( n_Q  expX \omega, \phi, \lambda)_Q^w \vert <<  e^{\rho(X) - \ep \Vert X \Vert },$$ 
$$ n_Q \in N_Q( \A) , X \in \a^{G+} _{0, Q, \delta} , \omega \in \Omega, \lambda \in \Lambda.$$
\end{lem}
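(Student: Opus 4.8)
The plan is to rerun the proofs of Lemmas \ref{diff1} and \ref{diff}, carrying along uniformity in the spectral parameter $\lambda$. First one may replace $\Lambda$ by its closure, which is a compact subset of $i\a_P^*$ (on which $E_P(g,\phi,\lambda)$ is analytic), since the asserted bound is an inequality of type $<<$; so assume $\Lambda$ compact. Next, exactly as at the start of the proof of Lemma \ref{diff1}, use that conjugation by $exp(-X)$ contracts $N_Q(\A)$ and that $N_Q(F)\backslash N_Q(\A)$ is compact to remove the variable $n_Q$ after enlarging $\Omega$; and, as in the proof of Lemma \ref{diff}, write $X\in\a^{G+}_{0,Q,\delta}$ as $X=X'+Y$ with $Y\in\a^G_Q$ the element determined by $\alpha(Y)=\alpha(X)$ for $\alpha\in\Delta_{P_0}\setminus\Delta_{P_0}^Q$, so that $X'\in\a_0^{G+}$, $Y\in\a^{G,+}_{Q,\delta_1}$ for a suitable $\delta_1>0$, and $\Vert X'\Vert<<\Vert X\Vert<<\Vert Y\Vert$. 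Since $\rho(X')+\rho(Y)=\rho(X)$ and $\Vert X\Vert<<\Vert Y\Vert$ (which swallows the polynomial factor), these reductions bring matters to a uniform-in-$\lambda$ version of Lemma \ref{diff1}: for some $d\in\N$ and $\ep>0$, and a given compact $\Omega\subset G(\A)$,
\[
\bigl|E_P(exp X\,exp Y\,\omega,\phi,\lambda)-E_P(exp X\,exp Y\,\omega,\phi,\lambda)^w_Q\bigr|<<(1+\Vert X\Vert)^d e^{\rho(X)}e^{\rho(Y)-\ep\Vert Y\Vert}
\]
for $X\in\a_0^+$, $Y\in\a^{G,+}_{Q,\delta}$, $\omega\in\Omega$, $\lambda\in\Lambda$.

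To prove this, fix $Y$ in the compact set $S_{Q,\delta}$ (unit sphere of $\a^G_Q$ intersected with $\a^{G,+}_{Q,\delta}$) and study the family of one-variable exponential polynomials $p_{Y,X,\omega,\lambda}(t)=E_P(exp X\,exp tY\,\omega,\phi,\lambda)_Q-E_P(exp X\,exp tY\,\omega,\phi,\lambda)^w_Q$. By Proposition \ref{uni}, $E$ lies in some $\cF_{\Lambda,\rho,n}$, so $E(\cdot,\lambda)$ and its right translates form a family of automorphic forms of uniformly bounded moderate growth; since their constant terms are given by (\ref{eq}) and are uniformly controlled by Corollary 6.5 of \cite{BL}, the uniform form of the relevant lemmas of \cite{MW} already used in the proof of Proposition \ref{uni} — i.e. (\ref{mw}) b), c) applied uniformly — gives $d\in\N$ with
\[
\bigl|E_P(exp X\,exp tY\,\omega,\phi,\lambda)_Q\bigr|<<(1+\Vert X\Vert)^d e^{\rho(X)}(1+t)^d e^{t\rho(Y)},\quad t>0,
\]
uniformly in $X\in\a_0^+$, $Y\in S_{Q,\delta}$, $\omega\in\Omega$, $\lambda\in\Lambda$. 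The same bound for the weak constant term follows from the explicit formula (\ref{eqw}), the holomorphy of the operators $M(s,\lambda)$ near $i\a_P^*$ and Proposition \ref{uni} applied to $M_Q$; subtracting, $p_{Y,X,\omega,\lambda}$ obeys the same majorization.

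Finally, by (\ref{real}) the exponents in $t$ of $p_{Y,X,\omega,\lambda}$ are the numbers $\mu(Y)+\rho(Y)$ with $\mu$ an exponent of $E_P(\cdot,\phi,\lambda)^-_Q$; these are finite in number, their multiplicities are bounded independently of $\lambda$, and their real parts do not depend on $\lambda\in i\a_P^*$. Being subunitary and non-unitary, each such $\mu$ has $Re\,\mu=\sum_{\alpha\in\Delta_Q}c_\alpha\alpha$ with $Re\,c_\alpha\leq0$ and at least one $Re\,c_\alpha\neq0$, so by compactness of $S_{Q,\delta}$ and $\lambda$-independence of the real parts there is $\ep'>0$ with $Re\,\mu(Y)\leq-\ep'$ for all $Y\in S_{Q,\delta}$, all admissible $\mu$ and all $\lambda$. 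Choosing $\ep''>0$ and a compact $\Omega'$ containing $\{exp tY\mid|t|<\ep'',\,Y\in S_{Q,\delta}\}\,\Omega$, the displayed bound applied with $\Omega'$ shows that $[(1+\Vert X\Vert)^d e^{\rho(X)}]^{-1}p_{Y,X,\omega,\lambda}$ are exponential polynomials of one variable bounded by a fixed constant on $[-\ep'',\ep'']$ with uniformly controlled exponents, and Lemma 3 of \cite{L1} then gives $|p_{Y,X,\omega,\lambda}(t)|<<(1+\Vert X\Vert)^d e^{\rho(X)}e^{t(\rho(Y)-\ep')}$ for $t>0$, uniformly in the remaining data; rescaling $Y$ this is the uniform version of Lemma \ref{diff1}, and the reductions complete the proof. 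The only real work is bookkeeping: one must verify that $d$, $\ep'$, $\ep''$, all implied constants and the exponent multiplicities are genuinely independent of $\lambda\in\Lambda$, which is precisely what Proposition \ref{uni} and (\ref{real}) are engineered to provide, so I expect no difficulty beyond that already handled in Lemmas \ref{diff1} and \ref{diff}.
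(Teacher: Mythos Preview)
Your proof is correct and follows essentially the same approach as the paper: the paper's proof is a one-sentence sketch saying to repeat Lemmas \ref{diff1} and \ref{diff} uniformly in $\lambda$, using Proposition \ref{uni}, the $\lambda$-independence of the real parts of the exponents (\ref{real}), and the explicit weak constant term (\ref{eqw}), which is precisely what you carry out in detail.
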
 

\begin{proof}
The proof is similar to the proof of the preceding lemma. 
One has  to prove an analogous of Lemma \ref{diff1} for Eisenstein series by  using Proposition \ref{uni}, 
that the real part of the exponents of $E_P(., \phi, \lambda)_Q  $ do not depend  of $\lambda \in i \a_P^*$ 
and the expression of the weak constant term of Eisenstein series (cf. (\ref{eqw})).
\end{proof}

\subsection{Wave packets in the Schwartz space}

\begin{prop} \label{wps}
Let $a $ be   a smooth compactly supported function on $i\a_P^*$ and $\phi\in \cA_P^2(G)$. Then the  wave packet 
$$E_a:= \int_{i\a_P^*}a(\lambda)  E_P (., \phi, \lambda)d\lambda$$ 
is in the Schwartz space $\cC([G])$. 
%This results of the following; For every $n \N$  there exists a seminorm $p_n$ on $\cS ( \i\a_P} $  such that $$\int_{i\a_P^*}a(\lambda)  E_P (exp X \omega, \phi, \lambda)d\lambda \leq p_{n} ( \alpha)
\end{prop}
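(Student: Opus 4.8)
The plan is to show that the wave packet $E_a$ satisfies, together with all its $U(\g_\infty)$-derivatives, the Schwartz bound $|(R_u E_a)(x)| \ll \sigma_{[G]}(x)^{-n}\Xi^{[G]}(x)$ for every $n$, by combining Arthur's Siegel-domain reduction with repeated integration by parts in the parameter $\lambda$, following the Harish-Chandra scheme for wave packets. First I would reduce the estimate to a Siegel domain $\s_{P_0}$: by covering $[G]$ by finitely many translates of such a domain and using $K$-finiteness of $\phi$, it suffices to bound $E_a(\exp X\,\omega)$ for $X\in\a_0^{+}$, $\omega$ in a fixed compact set, where by Lemma \ref{xiap} and (\ref{ssg}) the target bound becomes $e^{\rho(H_0)}(1+\|X\|)^{-n}$, i.e. rapid decay in $\|X\|$ against the factor $e^{\rho(X)}$.

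The core of the argument handles the region deep in a chamber. Fix a standard parabolic $Q$ and work on the cone $\a^{G+}_{0,Q,\delta}$ (with the $A_G$-direction handled separately since everything is compactly supported in the unitary parameter, which forces the $\a_G$-part to contribute a bounded oscillatory factor only). On such a cone, Lemma \ref{diffes} lets me replace $E_P(\cdot,\phi,\lambda)$ by its weak constant term $E_P(\cdot,\phi,\lambda)^w_Q$ up to an error $\ll e^{\rho(X)-\ep\|X\|}$ uniformly in $\lambda\in\Lambda=\mathrm{supp}\,a$, and this error already has the required exponential decay after integrating $a(\lambda)d\lambda$ over the compact set $\Lambda$. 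So I am reduced to the wave packet of the weak constant term, which by (\ref{eqw}) is a finite sum of terms $\int a(\lambda)\,E^Q_{Q_s}(\exp X\,\omega,\,M(s,\lambda)\phi,\,\lambda)\,d\lambda$ with $s\in W(P,Q)$; each such term, by (\ref{epq}), is governed on $A_Q^\infty$ by the character $e^{\lambda(H_Q)}$ times a bounded factor, exactly as in the real case. Integrating by parts $N$ times in $\lambda$ along $\a_Q^*$ trades each derivative for a factor $\|H_Q(\exp X)\|^{-1}$ acting on the smooth compactly supported $a$ (and on the intertwining operators $M(s,\lambda)$, which by Proposition \ref{uni} and analyticity of $M(s,\lambda)$ near $i\a_P^*$ are smooth in $\lambda$), producing decay $(1+\|X_Q\|)^{-N}$ for any $N$. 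Combining the chamber-by-chamber estimates over the finitely many $Q$ (using a partition of $\a_0^{+}$ into the cones $\a^{G+}_{0,Q,\delta}$ as $Q$ ranges over standard parabolics, for a suitable $\delta$) yields the full rapid decay.

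For the derivatives $R_u E_a$ with $u\in U(\g_\infty)$: since $R_u$ commutes with the $\lambda$-integral and $R_u E_P(\cdot,\phi,\lambda) = E_P(\cdot, \text{stuff}, \lambda)$ is again covered by the uniform bounds of Proposition \ref{uni} (which includes all $U(\g_\infty)$-derivatives) and by Lemma \ref{diffes}'s proof scheme, the same argument applies verbatim with $\phi$ replaced by finitely many translates, so no new idea is needed there.

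The main obstacle I anticipate is the bookkeeping in the integration-by-parts step: one must check that differentiating the matrix coefficients $E^Q_{Q_s}(\exp X\,\omega, M(s,\lambda)\phi,\lambda)$ in $\lambda$ stays uniformly bounded (in $X$ and $\omega$) after factoring out $e^{\rho(X)}$ — this is where Proposition \ref{uni} together with the holomorphy of $M(s,\lambda)$ on a neighbourhood of $i\a_P^*$ and the uniform Lemma \ref{tempexp}-type control of non-unitary exponents must be assembled carefully — and that the error terms from Lemma \ref{diffes}, summed over the chamber decomposition, do not accumulate. Once the decay in $\|X\|$ is established uniformly against $e^{\rho(X)}$, rewriting via $\Xi^{[G]}(g)\sim e^{\rho(H_0(g))}$ and $\sigma_{[G]}(g)\sim 1+\|H_0(g)\|$ on $\s_{P_0}$ gives precisely the defining estimates of $\cC([G])$, completing the proof.
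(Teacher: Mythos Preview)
Your overall architecture is right and matches the paper: reduce to a Siegel domain, cover $\a_0^{G,+}$ by cones indexed by proper standard $Q$, split $E=(E-E_Q^w)+E_Q^w$, and kill the first piece with Lemma~\ref{diffes}. The gap is in your treatment of the $E_Q^w$-piece.

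You assert that each summand $E_{Q_s}^Q(\cdot,M(s,\lambda)\phi,s\lambda)$ is ``governed on $A_Q^\infty$ by $e^{\lambda(H_Q)}$ times a bounded factor'' and that integration by parts along $\a_Q^*$ yields decay $(1+\|X_Q\|)^{-N}$. But the remaining factor is an Eisenstein series on $M_Q$; it is not bounded, only tempered, so on a Siegel set for $M_Q$ it contributes $e^{\rho^{M_Q}(X^Q)}(1+\|X^Q\|)^{d}$. Your integration by parts therefore gives at best
\[
\Big|\int a(\lambda)\,E_Q^w(\exp X\,\omega,\lambda)\,d\lambda\Big|\;\ll\;e^{\rho(X)}\,(1+\|X_Q\|)^{-N}(1+\|X^Q\|)^{d},
\]
and you never explain why this dominates $(1+\|X\|)^{-n}$ on the cone $\a_{0,Q,\delta}^{G,+}$: that would require $\|X_Q\|\gtrsim\|X\|$ there and that the $\lambda$-derivatives falling on the $M_Q$-Eisenstein series (through both $M(s,\lambda)$ and the internal parameter $(s\lambda)|_{\a_{Q_s}^Q}$) do not worsen $d$. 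You also do not specify that the direction of differentiation must be $s^{-1}\a_Q^*\subset\a_P^*$, term by term in $s$, so that the internal $M_Q$-parameter stays fixed; without this the derivatives hit the inner Eisenstein series as well, and Proposition~\ref{uni} says nothing about $\lambda$-derivatives.

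The paper avoids all of this by running an \emph{induction on $\dim\a_0^G$}. Via (\ref{eqw}) and (\ref{epq}), $\int a(\lambda)\,E_Q^w(\cdot,\lambda)\,d\lambda$ is, up to the factor $e^{\rho_Q}$, a wave packet of Eisenstein series on $M_Q$ with smooth compactly supported data $\lambda\mapsto a(\lambda)\,M(s,\lambda)\phi$; since $Q\neq G$ the induction hypothesis places it in $\cC([M_Q])$, giving rapid decay in the \emph{full} variable $X$ (both $X_Q$ and $X^Q$) simultaneously. The only integration by parts the paper performs is in the $\a_G$-direction, for the $(1+\|X_G\|)^{-k}$ factor on the error term $E-E_Q^w$. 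Your direct one-step argument could likely be salvaged, but carrying it out amounts to unwinding the induction; as written it is incomplete precisely at the point you flag as ``the main obstacle''.
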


\begin{rem} 
As already said in the introduction, this is due to Franke, \cite{F}, section 5.3, Proposition 2 (2). 
His proof rests on the main result of \cite{Ainner} for which Lapid in \cite{L1} has given a proof independent of \cite{Lan}. 
We give below a more selfcontained proof.
\end{rem}

\begin{proof} 
We proceed by  induction on the dimension of $\a_0^G$. 
The case  where $dim{\a_0}^G=0$  is immediate by  classical Fourier analysis on $\a_G$: 
the classical Fourier transform of a compactly supported function on $\R^n$ is in the Schwartz space. \\
Now we assume $dim\a_0^G >0$.  Let $S^+$ be the intersection of the unit sphere of $\a^G_0$ with $\a^{G,+}_0$.  
Let  $X_0$ in $S^+$. Let $Q$ be the standard parabolic subgroup of $G$ such that $X_0 \in \a_Q^{++}$. 
As $X_0\in S^+$, $Q$ is not equal to $G$. Let $\beta_Q(X):= \inf_{   \alpha \in  \Delta_{P_0} \setminus \Delta_{P_0}^Q} \alpha(X), X \in \a_0$. 
Then $\beta_Q(X_0)>0$.  We  choose   a neighborhood $S_0$  of $X_0 $ in $S^+$ such that 
$$\beta_Q( X)\geq \beta_Q(X_0)/2, X \in S_0.$$ 
Let  $\delta=  \beta_{Q}(X_0)/2$ . Then 
$$S_0 \subset \a_{0, Q, \delta}^{G, +} .$$
Let $\Lambda $ be the support  of  $a $. We use the notation of Proposition \ref{uni}. 
Let  $ E(.,\lambda):= E_P (., \phi, \lambda) $. Then $E(., \lambda)$ is the sum of 2 terms: $ E(.,\lambda)-E(., \lambda)^w _{  Q} $ and $ E(.,\lambda)_{Q}^w.$ 
Let  us  show that, for all $k \in \N$ one has: 
\ber \label{F} 
$\displaystyle{\vert \int_{ i\a_P^*} a(\lambda)  F( exp X_G exp tX \omega, \lambda)  d\lambda \vert << (1+\Vert X_G\Vert )^{-k} (1+t)^{-k} e^{t \rho(X)},}$\\ 
$t >0, X_G \in \a_G, X \in S_0, \omega \in \Omega, \lambda \in  \Lambda,$ 
\eer
when $F$ is any  of these two families of functions. 
% By first integrating on $i\a_G^*$,  we have to prove the same inequality by integrating on $i\a_P^{G*}$ and omitting $X_G$.\\ 
The case where $F = E_{Q}^w$  follows from the induction hypothesis, using the formula for the weak constant  term of Eisenstein series (cf. (\ref{eqw}),  (\ref{epq})) 
and the fact that  in this formula $M(w, \lambda) $ is analytic in $\lambda$ (cf. \cite{BL}, Remark 1.3). 
Let us treat the case where $F= E-E^w_Q$. 
One knows from Lemma \ref{diffes} that there exists $\ep>0$ such that: 
$$\vert E_P(exp X_G  exptX \omega, \phi, \lambda) -E_P(expX_G   exptX \omega, \phi, \lambda)_Q^w \vert <<  e^{t\rho(X) - \ep t },$$ 
$$X_G \in \a_G,  X \in S_0 , \omega \in \Omega, \lambda \in \Lambda.$$ 
By multiplying by $a$ and integrating on $i\a_P$, we get (\ref{F}) for  $F=E-E_Q^w$ and $k=0$.  
One applies this to successive partial derivatives  of $a$ with respect to elements of  $\a_G$. 
Then using that $E_Q^w$ transforms under  $\a_G$ by $\lambda$ and applying integration by part  one gets the result for all $k$. 
One can do the same for $R_uE$,  $u \in U( \g_\infty)$.\\ 
As a finite number of $S_0$ covers $S^+$ this achieves to prove the Proposition.
\end{proof}

\section{An isometry }
We recall the statement of Theorem 2 of \cite{L1}.\\ 
Let $\cP_{st}$ be the set of standard parabolic subgroups of $G$. Let $P$ be a standard parabolic subgroup of $G$. 
Let $\cW_P$  be the space of compactly supported smooth functions on $i \a_P^*$ taking values in a finite dimensional subspace of  $\cA^2_P$. 
Write:
\beq \label{3} 
\Vert \phi \Vert^2_* = \int_ {i \a^*_P } \Vert \phi(\lambda) \Vert_P^2 d\lambda.
\eeq 
For $ \phi \in \cW_P$, let 
$$\Theta_{P,  \phi}(g)  = \int_{i\a_P^*} E_P( g, \phi(\lambda), \lambda) d\lambda.$$
Let $L^2_{disc} (A_M^\infty  M(F) \backslash M( \A))$ be the Hilbert sum of irreducible $M(\A)$-subrepresentations of $L^2 (A_M^\infty  M(F) \backslash M( \A))$.\\ 
If $P$ is a standard parabolic subgroup of $G$, let $\vert \cP(M_P)\vert$ be equal to the number of  parabolic subgroups having $M_P$ as Levi subgroup. 
Consider the space $\cL$  consisting of families of functions $F_P: i\a^*_P \to  Ind_{P(\A)}^{G( \A)}L^2_{disc} (A_M^\infty  M(F) \backslash M( \A))$ 
where $P$ describes the set of standard parabolic subgroups of $G$ such that: 
$$ \Vert (F_P)\Vert^2 = \sum_{P \in \cP_{st}}\vert \cP(M_P)\vert^{-1}   \Vert F_P\Vert_*^2 <\infty $$ 
and 
\beq \label{fqm} 
F_Q (w\lambda)= M(w, \lambda) F_P( \lambda), w\in W(P\vert Q), \lambda \in i \a_P^*. 
\eeq 
Let  $\cL'$ be the subspace of $\cL$ consisting of those families such that $F_P \in \cW_P$ for all $P$. 

\begin{theo} \label{lapid}  \cite{L1}, Theorem 2\\
The map  $\cE$ from $\cL'$ to $L^2( G(F) \backslash G( \A))$
$$(F_P)  \mapsto \sum_{P\in \cP_{st}}\vert \cP(M_P)\vert ^{ -1} \Theta_{P, F_P}$$ 
extends to an isometry $\overline{\cE}$ from $\cL $ to  $L^2( G(F) \backslash G( \A))$.
\end{theo}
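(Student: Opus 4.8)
This is Theorem 2 of \cite{L1}, so the plan is to recall the architecture of Lapid's argument, which rests on the meromorphic continuation and functional equations of Eisenstein series attached to discrete data from \cite{BL}. First I would verify that $\cE$ is well defined on $\cL'$: for $F_P\in\cW_P$, after expanding $F_P(\lambda)$ in a fixed finite basis of the finite-dimensional subspace of $\cA^2_P$ in which it takes values, the wave packet $\Theta_{P,F_P}$ is a finite linear combination of wave packets of the type handled in Proposition \ref{wps}, hence it lies in $\cC([G])\subset L^2([G])$. Thus $\cE(F)\in L^2(G(F)\backslash G(\A))$ for every $F=(F_P)\in\cL'$, and $\cE$ is a well-defined linear map there.

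The main work is to prove that $\cE$ is isometric on $\cL'$. For $F,F'\in\cL'$ I would compute $(\cE(F),\cE(F'))_{L^2([G])}$ by pairing $\Theta_{Q,F'_Q}$ against the Eisenstein integral defining $\Theta_{P,F_P}$, unfolding, and evaluating the resulting inner product of two wave packets by shifting the contours of integration in the spectral variables and collecting the terms forced by the functional equations; this is the scalar-product formula of Langlands, re-established in the Bernstein--Lapid framework without appeal to \cite{Lan}. The outcome is a sum, over pairs $(P,Q)$ of standard parabolic subgroups and over $w\in W(P\vert Q)$, of the integrals $\int_{i\a_P^*}(M(w,\lambda)F_P(\lambda),F'_Q(w\lambda))_Q\,d\lambda$ (the inner product being the one underlying $\Vert\cdot\Vert_Q$ on $\cA^2_Q$), weighted by $|\cP(M_P)|^{-1}|\cP(M_Q)|^{-1}$. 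Then I would invoke the unitarity of $M(w,\lambda)$ on $i\a_P^*$ together with the compatibility relations (\ref{fqm}) satisfied by $(F_P)$ and by $(F'_Q)$ to reduce each such integral to $\int_{i\a_P^*}(F_P(\lambda),F'_P(\lambda))_P\,d\lambda$, so that the combinatorial bookkeeping matching the double cosets $W(P\vert Q)$ with the normalizing factors $|\cP(M_P)|^{-1}$ collapses the whole sum to $\sum_{P\in\cP_{st}}|\cP(M_P)|^{-1}\int_{i\a_P^*}(F_P(\lambda),F'_P(\lambda))_P\,d\lambda$. Comparing with (\ref{3}) and with the definition of $\Vert\cdot\Vert$ on $\cL$ gives $\Vert\cE(F)\Vert_{L^2([G])}=\Vert(F_P)\Vert$, and polarization recovers the full inner product.

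Finally I would pass from $\cL'$ to $\cL$. The subspace $\cL'$ is dense in the Hilbert space $\cL$: one approximates, one association class of Levi subgroups at a time, the $L^2$-section $F_P$ by smooth compactly supported sections valued in finite-dimensional subspaces of $\cA^2_P$, and enforces (\ref{fqm}) by averaging over the finite sets $W(P\vert Q)$, an operation that preserves smoothness, compact support, and finite-dimensionality of the target. Since $\cE$ is then an isometric linear map on the dense subspace $\cL'$ of $\cL$, it extends uniquely by continuity to an isometry $\overline{\cE}\colon\cL\to L^2(G(F)\backslash G(\A))$.

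The step I expect to be the main obstacle is the second one: the scalar-product formula for wave packets of Eisenstein series built from discrete data. It requires the meromorphic continuation of these Eisenstein series, control of the poles of the intertwining operators $M(w,\lambda)$ encountered during the contour shifts, their unitarity on the imaginary axis, and the precise combinatorial matching of the double cosets $W(P\vert Q)$ with the constants $|\cP(M_P)|^{-1}$ — exactly the material furnished by \cite{BL} and \cite{L1}. Everything else is formal: well-definedness comes from Proposition \ref{wps}, and the passage to $\cL$ is a routine density-and-continuity argument.
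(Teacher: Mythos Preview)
The paper does not prove this statement at all: it is stated as Theorem~\ref{lapid} with the attribution ``\cite{L1}, Theorem~2'' and is used as a black box. There is therefore no ``paper's own proof'' to compare against; the paper's contribution begins only with the surjectivity of $\overline{\cE}$, which is the subsequent theorem.

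Your sketch is a plausible outline, but the mechanism you describe for the key step is not quite the one Lapid uses. You speak of ``unfolding'' and ``shifting the contours of integration in the spectral variables''; Lapid's route in \cite{L1} is instead via Arthur's truncation operator and the asymptotic inner product formula for truncated Eisenstein series (the Maa\ss--Selberg relations in this generality), which is the main result of \cite{L1} and is re-proved there independently of \cite{Lan} using the input from \cite{BL}. One computes $(\Lambda^T E_P(\cdot,\phi,\lambda),\Lambda^T E_Q(\cdot,\psi,\mu))$ explicitly, lets $T\to\infty$ to isolate the polynomial piece, and then integrates against the test functions $F_P,F'_Q$; the functional equations and the combinatorics of $W(P\vert Q)$ enter at that stage, and the constants $|\cP(M_P)|^{-1}$ are what make the resulting bilinear form match $\langle\cdot,\cdot\rangle_\cL$. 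Your endpoint is correct, but the engine is truncation rather than contour shifting. The well-definedness and density arguments you give are fine.
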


\begin{lem} 
We take the notation of Proposition \ref{wps}.  In particular $P$ is fixed. Then $E_ a$ is in the image of  $\cE$. 
\end{lem}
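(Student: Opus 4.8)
The plan is to produce an explicit element of $\cL'$ whose image under $\cE$ is a nonzero multiple of $E_a$, and then rescale. Recall from Proposition \ref{wps} that $P$ is a fixed standard parabolic, $a$ is smooth compactly supported on $i\a_P^*$, $\phi\in\cA^2_P(G)$, and $E_a=\int_{i\a_P^*}a(\lambda)E_P(\cdot,\phi,\lambda)\,d\lambda$. Let $\cF_P$ be the set of standard parabolic subgroups $Q$ of $G$ with $W(P\vert Q)\neq\emptyset$, i.e. associate to $P$. For $Q\in\cF_P$ define
\[
F_Q(\mu):=\sum_{w\in W(P\vert Q)}a(w^{-1}\mu)\,M(w,w^{-1}\mu)\phi,\qquad \mu\in i\a_Q^*,
\]
and $F_Q:=0$ for $Q\notin\cF_P$; here $M(w,\cdot)\colon\cA^2_P(G)\to\cA^2_Q(G)$ is the intertwining operator of \cite{BL}, which is analytic in a neighbourhood of the imaginary axis.

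First I would check $(F_Q)\in\cL'$. Each $F_Q$ is smooth and compactly supported on $i\a_Q^*$ since $a$ is and $M(w,\cdot)$ is analytic near $i\a_Q^*$; and since $\phi$ is $K$-finite and annihilated by a cofinite ideal of the center of $U(\g_\infty)$ while each $M(w,\lambda)$ preserves $K$-types and moves infinitesimal characters only within a bounded set as $\lambda$ runs over the compact support, the values of $F_Q$ lie in a fixed finite-dimensional subspace of $\cA^2_Q(G)$, so $F_Q\in\cW_Q$. The compatibility relation (\ref{fqm}) is the heart of the matter. For standard $Q,Q'$ and $w_0\in W(Q\vert Q')$, composition with $w_0$ induces a bijection $W(P\vert Q)\to W(P\vert Q')$, $w\mapsto w_0w$ (the product being again of minimal length, by the standard combinatorics of Levi-preserving Weyl elements); writing $w'=w_0w$ one has $(w')^{-1}w_0\mu=w^{-1}\mu$, and the multiplicativity $M(w_0w,\nu)=M(w_0,w\nu)M(w,\nu)$ of intertwining operators gives $M(w',(w')^{-1}w_0\mu)=M(w_0,\mu)M(w,w^{-1}\mu)$. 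Substituting into the definition yields $F_{Q'}(w_0\mu)=M(w_0,\mu)F_Q(\mu)$; if $Q\notin\cF_P$ both sides vanish. Hence $(F_Q)\in\cL$, and being componentwise in $\cW_Q$ it lies in $\cL'$.

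Finally I would compute $\cE((F_Q))$. For $Q\in\cF_P$, substituting $\mu=w\lambda$ in each summand (a measure-preserving bijection $i\a_P^*\to i\a_Q^*$) and using the functional equation of Eisenstein series $E_Q(\cdot,M(w,\lambda)\phi,w\lambda)=E_P(\cdot,\phi,\lambda)$ for $w\in W(P\vert Q)$,
\[
\Theta_{Q,F_Q}=\sum_{w\in W(P\vert Q)}\int_{i\a_P^*}a(\lambda)\,E_Q(\cdot,M(w,\lambda)\phi,w\lambda)\,d\lambda=\vert W(P\vert Q)\vert\,E_a .
\]
Since conjugation by any element of $W(P\vert Q)$ identifies $\cP(M_P)$ with $\cP(M_Q)$, the integer $\vert\cP(M_Q)\vert$ equals $\vert\cP(M_P)\vert$ for every $Q\in\cF_P$, whence
\[
\cE((F_Q))=\sum_{Q\in\cF_P}\vert\cP(M_Q)\vert^{-1}\Theta_{Q,F_Q}=\Big(\vert\cP(M_P)\vert^{-1}\sum_{Q\in\cF_P}\vert W(P\vert Q)\vert\Big)E_a=:\kappa\,E_a
\]
with $\kappa>0$. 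Therefore $(\kappa^{-1}F_Q)\in\cL'$ and $\cE((\kappa^{-1}F_Q))=E_a$, so $E_a$ is in the image of $\cE$. I expect the main obstacle to be the bookkeeping in the verification of (\ref{fqm}): one must ensure that composites of Weyl elements are taken with minimal-length representatives and that the cocycle relation for the $M(w,\lambda)$ and the functional equation of Eisenstein series are invoked with the normalisations of \cite{L1}, \cite{BL}; the finite-dimensionality of the image of $F_Q$ is routine but should be recorded.
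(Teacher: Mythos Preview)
Your proof is correct and follows essentially the same route as the paper's: both construct the family $F_Q(\mu)=\sum_{w\in W(P\vert Q)}a(w^{-1}\mu)M(w,w^{-1}\mu)\phi$, check (\ref{fqm}) via the product formula for intertwining operators, and conclude by the functional equation of Eisenstein series. Your version is simply more explicit in spelling out the bijection $W(P\vert Q)\to W(P\vert Q')$ induced by $w_0\in W(Q\vert Q')$, the finite-dimensionality of the target of $F_Q$, and the computation of the positive constant $\kappa$, whereas the paper only asserts these points.
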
 

\begin{proof}  
For this  one has to define a family in $ \cL'$  whose image by $\cE$ is a non zero multiple of $E_a$. 
If  $\psi  $ is a map on $i\a_P^*$  with values  in $\cA_P^2(G)$ and $s\in W(P,P)$,  we define 
$$s. \psi ( \lambda):= M(s,s^{-1}  \lambda) \psi (s^{-1} \lambda).$$ 
From the properties of composition of the intertwining operators, this defines an action of the group $W(P\vert P)$.
% [ $$s's. \psi (\lambda)= s'.[s\psi] ( \lambda)= M(s', s'^{-1} \lambda) s.\psi ( s'^{-1} \lambda)=  M(s',  s'^{-1} \lambda) M(s,s^{-1}s'^{-1} \lambda) \psi( s^{-1} s'^{-1} \lambda)$$] 
Let 
$$F_Q( \lambda) = \sum_{s \in W(P\vert Q)}  M(s,s^{-1}  \lambda) \psi (s^{-1} \lambda).$$
It is an easy consequence of the product formula for intertwining operators (cf. \cite{BL}, Theorem  1.3 (4))  that  $F_P$ is invariant by the action of $W(P,P)$ and 
also that the family $(F_Q)$ satisfies (\ref{fqm}). Moreover  it is in $\cL'$ as the intertwining operators are analytic on the imaginary axis (cf. l.c. Remark 1.3). 
Then the functional equation for Eisenstein series  (cf. lc. Theorem 1.3 (3)), implies that the image of $(F_Q)$ by $\cE$ is a non zero multiple of $E_a$. 
\end{proof} 

\section{Truncated inner product}

If $Q$ is   a semistandard  parabolic subgroup  of $G$, let: 
$$\theta_Q (\lambda) =\prod_{\alpha \in \Delta_Q} \lambda( \check{\alpha}), \lambda \in \a_{Q, \C}^*. $$ 
Let $L_Q$ be the cocompact lattice  of $\a_G^Q$  generated by $\check{ \Delta}_Q$  and let $C_Q = vol (\a_Q^G /L_Q)$.\\ 
We fix a Siegel domain as in (\ref{defsiegel}) associated to a compact set $\Omega_0P_0^1(\A) $ and to $T_0\in \a_0$. 
We can choose $\Omega_0= \Omega_{N_0} \Omega_{M_0^1}$ where  $\Omega_{N_0}$ (resp. $\Omega_{M_0^1}$) is a compact subset of $N_0( \A)$ 
(resp.$M_0( \A)^1$) such that 
\beq \label{nnm} 
N_0(\A)= N_0(F) \Omega_{N_0} ,  M_0( \A)^1 =M_0(F) \Omega_{M_0^1}.
\eeq
If $C$ is a subset of $\a_0$ we define $M_0(C)= \{m\in M_0(\A) \cap G(\A)^1 \vert H_0(m) \in C\}$ which is right invariant by $M_0(\A)^1$. 
We take $T $ dominant and regular in $\a_0^G$. 
We let $d_{P_0}(T)= \inf_{\alpha \in \Delta_{P_0} } \alpha(T)$ and if $Q $ is a standard parabolic subgroup of $G$, $T_Q$ is the orthogonal projection of $T$ on $\a_Q$.\\ 
If $Q$  is a standard parabolic subgroup of $G$, we define  the convex  set $C_T^Q $   of $\a_0^G$ by 
$$C_T^Q  =\{X \in \a_0^G \vert  \alpha (X-T_0) \geq  0, \varpi_\alpha ( X-T) \leq   0, \alpha \in \Delta_{0}^Q, \beta (X-T)>0, \beta \in \Delta_Q \}.$$ 
Notice that $C_T^G$ is compact. \\ 
Let $T_{M_Q}= T-T_Q, T_{0, M_Q}= T_0-T_{0, G}$. 
Let us define $C_{T_{M_Q}}^{M_Q}\subset \a_0^{M_Q} \subset \a_0^G$  is defined with  $T_{M_Q} = T-T_Q $ instead of $T$ and $T_{0, M_Q} $ instead of $T_0$. 
Let $ \a^{G,++}_Q(T)= T_Q + \a^{G,++}_Q$. 
We have:  
\beq \label{cqt=} 
C_T^Q = C_{T_{M_Q}}{M_Q}  + \a^{G,++}_Q (T)
\eeq

We define 
$$\CC^G_T= G(F) \Omega_{N_0} M_0 ( C_{T}^G )  K\subset [G]$$  
which is compact. Using (\ref{nnm}), one has: 
$$\CC^G_T= G(F) N_0( \A) M_0 ( C_{T}^G )  K. $$
% Let $ \Omega_{N_0\cap M_Q} $ be  the  projection of $ \Omega_{N_0} $ on $N_Q( \A) \backslash N_0(  \A)$. It  identifies to a subset  of  $(N_0 \cap M_Q)( \A)$. 
Replacing $ N_0$ by $N_0 \cap M_Q$ and $G$ by $M_Q$  we define $\CC_T^{M_Q}\subset [M_Q]$ by: 
\beq \label{ccm} 
\CC_{T_{M_Q}}^{M_Q} =M_Q(F) (N_0\cap M_Q)(\A)M_0 ( C_T^{M_Q} )(K\cap M_Q(\A)).
\eeq
which is independent of the choice of $\Omega_0$. \\ 
We define 
\beq  \label{ccq} 
\CC_T^Q  =  Q(F) N_0( \A)  M_0 (C_T^Q )  K\subset  Q(F) \backslash G(\A)^1.
\eeq 
Then  $ \CC_T^Q  $ is $N_Q(\A)$ invariant as 
$$N_Q(\A)Q(F)N_0(\A)= Q(F)N_Q( \A) N_0( \A)= Q(F) N_0( \A). $$ 
As $N_Q( \A)N_0( \A) = N_Q( \A)(N_0 \cap M_Q)(\A)$ one has from (\ref{cqt=}): 
\beq \label{cctq=}
\CC_T^Q  =  N_Q(\A) exp (\a^{G,++}_Q (T))  \CC^{M_Q}_{T_{M_Q}}  K\subset  Q(F) \backslash G(\A)^1. 
\eeq 
We say that a strictly $P_0$-dominant $T\in \a_0^G$ is sufficiently regular if there exists a sufficiently large $d>0$ with $d_{P_0}(T)\geq d$. 
We have the following result due to Arthur (\cite{AtrI}, Lemma 6.4).
\ber\label{partition}
Let $T$ be sufficiently regular. \\ 
(i) For each standard parabolic subgroup $Q$ of $G$, viewing $\CC_T^Q$ as a subset of  $Q(F) \backslash G( \A)$, the  projection to $[G]$ is injective on this set. 
Its image is still  denoted $\CC_T^Q $.\\ 
(ii)  The $\CC_{T}^{Q} $ form a partition of $[G]^1$.
\eer
For a compactly supported function $f $ on $\CC_T^Q$  we have, using (\ref{cctq=}): 
\beq \label{intf}  
\int_{\CC_{T}^Q}  f( x) dx 
= \int_{(N_Q(F)  \backslash N_Q( \A ))\times {  \a^{G,++}_Q(T)} \times { \CC_{T_{M_Q}}^{M_Q}} \times K } f(n_Q exp X m^1_Q k)e^{- 2 \rho_Q(X)}  dn_Q dX dm^1_Q dk. 
\eeq 
as  follows  from the   integration formula  on $G(\A)$  related to the decomposition $ G(\A)^1 = N_Q(\A)exp \a^G_Q  M_Q(\A)^1 K$. 
Here  $dm^1_Q$ is the measure on $[M_Q]^1$.\\
Let  
$$\a^G_{0, -} =  \{ X \in \a_0^G  \vert \omega( X) \leq  0, \omega \in \hat{\Delta}_0 \}$$ 
be  the cone generated  by the negative  coroots   and  
$$\a^G_{Q,- }(T) =\{ X \in \a_Q^G  \vert \omega( X-T) \leq  0, \omega \in \hat{\Delta}_Q \} = T_Q  + \a_{Q} \cap \a_{0,-}^G.$$ 
Let $ p$ be an exponential polynomial with unitary exponents on $ \a_Q$ and $Z \in \a_G$. If $\mu\in \a_{Q, \C} ^{*}$ has its real part strictly $Q$-dominant, the integral: 
$$\int_ { \a^G_{Q,- }(T) }  e^{\mu(X+Z)}  p(X)   dX$$ 
is convergent and has a meromorphic continuation in $\mu$. When it is defined, its value in $\lambda \in i\a_Q^*$ is denoted:
$$\int^*_ {Z + \a^G_{Q,- }(T) }  e^{\lambda(X)}  p(X)  dX.$$ 
We use the notation following  \ref{polynomial}. 
We define  for $ \phi\in \cA^2_Q(G)\oplus \cA^{temp,c}_Q(G)$, $\lambda \in \a_Q^* $  and $ \Psi \in \cA^{temp}_Q (G)$,  $Z \in \a_G$:
\beq \label{rtp}
r^T_Q ( \phi_\lambda , \Psi )^Z  = \int^*_ {Z + \a^G_{Q,- }(T) }  e^{\lambda(X)} p_Q (\phi, \Psi)(X)  dX. 
\eeq 
Let $Z\in \a_G$. If  $p$ is a polynomial on $\a_Q$, we define $p^Z$  the exponential polynomial on $\a_Q^G$ defined by: 
$$p^Z (X)= p(X+Z), X \in \a_Q^G$$ 
and $p^Z( \partial)$ its Fourier transform viewed as a differential operator on $\a_Q^{G,*}$. \\ 
Recall that $C_Q$ has been defined in the beginning of this  section. One has:
\beq \label{rtp=} 
r^T_Q ( \phi_\lambda , \Psi )^Z  
= C_Q   \sum_{ \mu\in \cE_Q(\Psi) }e^{(\lambda-\mu)(T_Q+Z)}[p(\phi, \Psi_{0, \mu})^{Z} ( \partial)\theta_{ Q} ]( \lambda-\mu), 
\eeq 
One can define $r_T  ( \psi, \Psi)$ where $ \psi $ is a linear combination  of $ \phi_\lambda$. 
If $ \Phi $ is a function on $G(\A)$ and $Z \in \a_G$,  one defines a function on $G( \A)^1$ by: 
$$\Phi^Z ( g^1) = \Phi (  g^1exp Z), g^1 \in G( \A)^1 $$ 

\begin{theo} \label{tr} 
Let $\Phi$ be an element of  $\cA^{temp}(G)$ and $\phi  \in \cA^2_P(G)$. 
We denote by $E(., \lambda)$ the function $E_P (. , \phi, \lambda) $. 
Let: 
$$\Omega_{P_0}^T (E (\lambda),  \Phi)^Z:=\int_{  \CC_T^G} E(x,  \lambda)^Z   \overline{\Phi}^Z (x) dx,$$ 
$$ \omega_{P_0}^T( E(\lambda),\Phi )^Z:=\sum_{Q \in \cP_{st} } r^T_Q  (E(\lambda)_Q^w  , \Phi^w_Q)^Z .$$ 
(i) Let $ \cH^c$  be the subset of $\lambda \in i\a^*_P$ where  the summands of $\omega^T_{P_0}(E(\lambda, \Phi)$ are analytic for all $Z$. 
From (\ref{rtp=}), this set contains the complementary set of a finite union of hyperplanes. 
The function $\omega_{P_0} ^T(E( \lambda), \Phi)^Z$  on $\cH^c$ extends to an analytic  function on $i \a_P^*$ denoted in the same way.\\ 
(ii)
%  Let $ \cC $ be a cone generated by $dim\a_0^G$   elements in $ \a_0^G \cap \a_0^{++}$.  Let $Z\in \a_G$. Let $ \cP_{st}(G) $ be the set of standard parabolic of $G$. For $P, Q\in \cP_{st}(G) $, $P \approx Q$ will mean that $P$ and $Q$ are associated. 
Let $\delta>0$. Let $\Lambda$ be a bounded set of $i\a_P^*$. 
There exists $k \in \N$, $\ep>0$ such that  the difference 
$$ \Delta_{P_0} ^T( E(\lambda) , \Phi)^Z:= \Omega_{P_0}^T (E (\lambda),  \Phi)^Z- \omega_{P_0}^T( E(\lambda),\Phi )^Z$$ 
%$$ \omega^T(\lambda, E,T)^Z:=\sum_{Q standard} \sum_{w \in W(P,Q) }  \sum_{\mu \in E^w_Q(\Phi) }C_Q [(^0M(w,  \lambda)\phi) ,\Phi_{0, \mu})^Z_Q ( \partial) e^{< .,Z+ T_Q> } \theta_Q(.)] ( \lambda^w -\mu ( \lambda^w _\mu)$$ 
is an $O( e^{-\ep \Vert T \Vert }( 1+ \Vert Z\Vert^k)$  for $\lambda \in \Lambda$, for  $T$ such that $d_{P_0} (T) \geq \delta \Vert T \Vert $, $Z \in \a_G$.
%Here  we use the notation of (\ref{1}),  (\ref{2})   and (\ref{polynomial}) to define $(^0M(w,  \lambda)\phi) ,\Phi^w _{Q, 0, \mu})^Z_Q $. The left upper index indicates that we look    to the restriction of the polynomial to $\a_Q^Z$
\end{theo}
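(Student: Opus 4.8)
The plan is to prove (ii) by Arthur's true‑truncation method from the local trace formula \cite{Alt}, in the form used for real symmetric spaces in \cite{D2}: one computes the integral over the compact set $\CC_T^G$ through the partition (\ref{partition}), and (i) will come out of that computation. Fix once and for all $\Lambda\subset i\a_P^*$ bounded, $\delta>0$, and $T$ strictly $P_0$‑dominant with $d_{P_0}(T)\ge\delta\Vert T\Vert$; the latter condition guarantees that $\Vert T_Q\Vert$ is bounded below by a positive multiple of $\Vert T\Vert$ for every proper standard $Q$, which is what converts decay in $\Vert T_Q\Vert$ into decay in $\Vert T\Vert$.

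First I would rewrite $\Omega_{P_0}^T(E(\lambda),\Phi)^Z$ as an integral over the polytope $C_T^G\subset\a_0^G$ (using $\CC_T^G=G(F)N_0(\A)M_0(C_T^G)K$, the injectivity in (\ref{partition})(i), and the integration formulas behind (\ref{intf})), and decompose this polytope according to its faces. By Arthur's combinatorics — (\ref{cqt=}) iterated and Lemma 6.4 of \cite{AtrI} — the contribution attached to a standard parabolic $Q$ is, in the $\a_Q^G$‑direction, supported on the shifted cone $\a^G_{Q,-}(T)=T_Q+\a_Q\cap\a_{0,-}^G$ (cut off toward the bulk), and bounded in the complementary $\a_0^{M_Q}$‑direction by (a translate of) $C^{M_Q}_{T_{M_Q}}$; on this region one may replace $E(\cdot,\lambda)$ and $\Phi$ by the weak constant terms $E(\cdot,\lambda)_Q^w$, $\Phi_Q^w$. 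That this replacement is legitimate up to $O(e^{-\ep\Vert T\Vert})$ is the content of Lemmas \ref{diff} and \ref{diffes}, applied to the decomposition $E^Z\overline{\Phi}^Z-(E_Q^w)^Z\overline{(\Phi_Q^w)}^Z=(E-E_Q^w)^Z\overline{\Phi}^Z+(E_Q^w)^Z\overline{(\Phi-\Phi_Q^w)}^Z$ together with the temperedness bounds; the integral of the resulting $e^{\rho}(1+\Vert X\Vert)^d e^{-\ep\Vert X\Vert}$ against the measure $e^{-2\rho_Q(X)}dX$ (the $e^{2\rho}$ of the integrand cancelling $e^{-2\rho_Q}$ up to a bounded $M_Q$‑factor) over the cone rooted at $T_Q$ is $O(e^{-\ep''\Vert T_Q\Vert})=O(e^{-\ep''\Vert T\Vert})$. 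Uniformity in $\lambda\in\Lambda$ comes from Proposition \ref{uni} (and the uniform form of the decay estimates), and the polynomial dependence on $\Vert Z\Vert$ from the fact that $E$ and $\Phi$ are exponential polynomials with unitary exponents along $\a_G$, so that translation by $\exp Z$ costs only a unitary character times a polynomial in $Z$.

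It then remains to evaluate the integral of the model $(E_Q^w)^Z\overline{(\Phi_Q^w)}^Z$ over the $Q$‑region and to match the sum over $Q$ with $\omega_{P_0}^T$. Since $E_Q^w$ (see (\ref{eqw}), (\ref{epq})) and $\Phi_Q^w$ are left $N_Q(\A)$‑invariant up to the $\rho_Q$‑twist, the $N_Q$‑, $K$‑ and $[M_Q]^1$‑integrations produce the exponential polynomial $p_Q(E(\lambda)_Q^w,\Phi_Q^w)$ on $\a_Q^G$ of (\ref{polynomial}) — the passage from the truncated Levi piece $C^{M_Q}_{T_{M_Q}}$ to the full $[M_Q]^1$, and the fact that only the $\cA^2_Q$‑components (equivalently, in view of (\ref{eqw}) and the $M_Q$‑analogue of (\ref{etempc}), the $W(P\vert Q)$‑terms) survive, being again an $O(e^{-\ep\Vert T\Vert})$‑matter handled by induction on $\dim\a_0^G$. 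What is left is the integral of $e^{\lambda(X)}p_Q(E(\lambda)_Q^w,\Phi_Q^w)(X)$ over $Z+\a^G_{Q,-}(T)$; extending the truncated integral to the full cone produces, by an Arthur‑style identity between a regularized global inner product and its boundary terms (the divergent pieces coming from different $Q$ cancelling in the sum, as in the cancellation‑of‑singularities arguments of Langlands and Arthur), exactly the regularized integral $\int^*$ of (\ref{rtp}), i.e. $r^T_Q(E(\lambda)_Q^w,\Phi_Q^w)^Z$ by the explicit formula (\ref{rtp=}). Summing over $Q\in\cP_{st}$ gives $\omega_{P_0}^T(E(\lambda),\Phi)^Z$ and proves (ii). For (i): (\ref{rtp=}) displays each $r^T_Q(E(\lambda)_Q^w,\Phi_Q^w)^Z$ as a finite sum of exponentials $e^{(\lambda-\mu)(T_Q+Z)}$ times the value at $\lambda-\mu$ of a fixed rational function with poles on the hyperplanes $(\lambda-\mu)(\check\alpha)=0$ ($\mu\in\cE_Q(\Phi_Q^w)$, $\alpha\in\Delta_Q$), which is the description of $\cH^c$; and the same cancellation of singularities — equivalently, the fact established in the proof of (ii) that $\sum_Q r^T_Q$ differs on $\cH^c$ from the entire function $\Omega_{P_0}^T$ by integrals over fixed regions of the holomorphic family $E(\cdot,\lambda)$, which are entire in $\lambda$ — shows that $\omega_{P_0}^T$ extends analytically to $i\a_P^*$.

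The analytic estimates above are routine once organised; the real work, and the main obstacle, lies in two places. The first is the combinatorial bookkeeping that makes the decomposition of $C_T^G$ exact and correctly pairs each face with the cone $\a^G_{Q,-}(T)$ and the Levi polytope $C^{M_Q}_{T_{M_Q}}$ — here one must reproduce, in the automorphic setting, Arthur's geometric lemmas from \cite{AtrI}. The second is the rigorous handling of the non‑absolutely‑convergent integrals: justifying, uniformly for $\lambda\in\Lambda$ and with only polynomial growth in $Z$, both the passage from $C^{M_Q}_{T_{M_Q}}$ to $[M_Q]^1$ (which is where the induction on $\dim\a_0^G$ enters) and the identification of the extension‑to‑the‑full‑cone of the $\a_Q^G$‑integral with the regularized $r^T_Q$, including the cancellation of the spurious singular contributions across the sum over $Q$.
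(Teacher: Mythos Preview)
Your plan has the right local ingredients (Lemmas \ref{diff}, \ref{diffes}, the induction on $\dim\a_0^G$, Arthur's partition), but the global architecture does not go through. The basic obstruction is that $\omega^T$ is a sum of \emph{regularized} integrals $r^T_Q$ over the infinite cones $\a^G_{Q,-}(T)$, defined by meromorphic continuation in (\ref{rtp}), while $\Omega^T$ is an honest integral over the compact polytope $C_T^G$. There is no decomposition of $C_T^G$ into ``$Q$-pieces'' on which the integral of $E_Q^w\overline{\Phi_Q^w}$ equals, or is exponentially close to, $r^T_Q$; the partition (\ref{partition}) decomposes $[G]^1$, with the $\CC_T^Q$ for $Q\neq G$ lying \emph{outside} $\CC_T^G$, not inside. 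The step you describe as ``extending the truncated integral to the full cone produces exactly $r^T_Q$'', with errors cancelling across $Q$, is precisely where the argument breaks: this is not an identity one can establish by the manipulations you sketch, and your proof of (i) (that $\omega^T$ differs from the entire $\Omega^T$ by integrals that are themselves entire) presupposes it, hence is circular.

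The paper's proof is indirect, and that indirection is the missing idea. One never compares $\Omega^T$ with $\omega^T$ directly. Instead one first proves a Cauchy estimate $\vert\Delta^{T+S}-\Delta^T\vert\le Ce^{-\ep\Vert T\Vert}(1+\Vert Z\Vert)^k$: here one partitions $\CC^G_{T+S}$ (not $\CC^G_T$) into pieces $\CC^Q_{T+S,T}=\CC^G_{T+S}\cap\CC^Q_T$ via (\ref{partition}), replaces $E\overline{\Phi}$ by $E_Q^w\overline{\Phi_Q^w}$ on each piece, applies the induction hypothesis to $M_Q$, and checks by a further combinatorial partition of $\a^G_{P_1,-}(T+S)$ that the sum over $Q$ equals $\omega^{T+S}$ \emph{exactly}. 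This gives existence of $\Delta^\infty=\lim_T\Delta^T$ on $\cH^c$. To show $\Delta^\infty=0$ one uses a wave-packet argument absent from your proposal: the distribution $a\mapsto\int_{[G]^1}E_a^Z\overline{\Phi}^Z$ has finite support $\cS$ (via the action of $Z([\g_\infty,\g_\infty])$), and $\int a(\lambda)\omega^T(\lambda)d\lambda\to0$ by Fourier decay, so $\Delta^\infty$ vanishes on $\cH^c\cap\cS^c$. Part (i) then follows a posteriori: since $\Omega^T$ is entire and $\omega^T=\Omega^T-\Delta^T$ is bounded near each $\lambda_0$ on the dense open $\cH^c\cap\cS^c$, its hyperplane singularities are removable.
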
 

The proof is by induction on $dim \a_0^G$. The statement is clear for $dim \a_0^G=0$. We suppose that the Theorem is true for all groups $G'$ with $dim \a_0^{G'}< dim \a_0^G$. 

%\begin{rem} $$(^0M(w,  \lambda)\phi) ,\Phi^w_{0, \mu})^0_Q ( \partial) e^{< ., T_Q> } \theta_Q(.) ( \lambda^w -\mu)=e^{ w\lambda -\mu(T_Q)}  \int^*_{\a_Q^{G+}} ( ^0M(w, \lambda) \phi , \Phi_{Q, 0, \mu})_Q^Xe^{ w\lambda -\mu(X)}  dx.$$ where $ \int^*$ denotes a regularized integral.AA\end{rem} 

\begin{lem}
Let $ k_0, \delta> 0$. Then if  $\Lambda$ is  a bounded subset  of $\cH^c$, there exists $C>0$, $k\in \N$, $\ep >0$  such that 
$$\vert \Delta_{P_0} ^{T+S} ( E(\lambda) , \Phi)^Z- \Delta_{P_0} ^T( E( \lambda), \Phi)^Z \vert  \leq  C e^{ - \ep \Vert T\Vert}( 1+\Vert Z\Vert )^k, $$
for $ \lambda \in \Lambda, Z \in \a_G$, for $T,S$ strictly $P_0$-dominant such that  
$d_{P_0}(T) \geq \delta  \Vert T \Vert $, $\Vert  S \Vert \leq k_0 \Vert T\Vert$, $\Vert T_0 \Vert \leq \Vert  T \Vert $.
\end{lem}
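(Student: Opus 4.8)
The plan is to compare the two truncation parameters $T$ and $T+S$ by controlling the difference region in the partition of $[G]^1$. By \eqref{partition}, for a fixed sufficiently regular parameter the sets $\CC^Q_{\bullet}$ partition $[G]^1$; so one should write $\CC^G_{T+S}$ in terms of the finer partition attached to $T$. Concretely, intersecting $\CC^G_{T+S}$ with the partition $\{\CC^Q_T\}_{Q\in\cP_{st}}$ gives
$$\Omega^{T+S}_{P_0}(E(\lambda),\Phi)^Z=\sum_{Q\in\cP_{st}}\int_{\CC^G_{T+S}\cap\,\CC^Q_T}E(x,\lambda)^Z\,\overline{\Phi}^Z(x)\,dx,$$
the $Q=G$ term being $\Omega^{T+S}_{P_0}\cap\Omega^T_{P_0}$ up to a negligible boundary. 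First I would isolate the $Q=G$ piece: the symmetric difference of $\CC^G_T$ and $\CC^G_{T+S}$ meets every Siegel set in a region where $\Vert X\Vert$ is bounded below by something like $\sim d_{P_0}(T)\geq\delta\Vert T\Vert$ minus $O(\Vert S\Vert)=O(k_0\Vert T\Vert)$ — so one needs the hypothesis $d_{P_0}(T)\geq\delta\Vert T\Vert$ together with $\Vert S\Vert\leq k_0\Vert T\Vert$ to guarantee that on this difference region $\Vert X\Vert\gg\Vert T\Vert$. There, by the uniform temperedness of Eisenstein series (Proposition \ref{uni}) and the moderate growth of $\Phi\in\cA^{temp}(G)$, the integrand is bounded by $e^{\rho(X)}(1+\Vert X\Vert)^k(1+\Vert Z\Vert)^k$ against the volume factor $e^{-2\rho_Q(X)}$, which after integrating the Siegel coordinates gives a gain $e^{-\ep'\Vert X\Vert}\leq e^{-\ep\Vert T\Vert}$.

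\medskip
The genuinely nontrivial part is the contribution of the proper $Q\subsetneq G$. For these, the plan is to use \eqref{cctq=} to write $\CC^Q_T = N_Q(\A)\exp(\a^{G,++}_Q(T))\,\CC^{M_Q}_{T_{M_Q}}K$, decompose the integral over $\CC^G_{T+S}\cap\CC^Q_T$ via \eqref{intf}, and then in each such slice replace $E(x,\lambda)$ by its weak constant term $E(x,\lambda)^w_Q$ using Lemma \ref{diffes}: on the region $\a^{G,++}_Q(T+S)\cap(\text{slice})$ the coordinate $X$ lies in $\a^{G+}_{0,Q,\delta'}$ for a suitable $\delta'$ controlled by $\delta$, so $\vert E-E^w_Q\vert\ll e^{\rho(X)-\ep\Vert X\Vert}$ and, since $\Vert X\Vert\gtrsim d_{P_0}(T)\gtrsim\delta\Vert T\Vert$ on the relevant difference region, this again produces $e^{-\ep\Vert T\Vert}$. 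After this replacement, $E(\lambda)^w_Q$ and $\Phi^w_Q$ are honest automorphic forms on $M_Q$ (an exponential polynomial in the $\a^G_Q$-variable times a form on $[M_Q]$), so the $Q$-slice integral becomes a product of: an integral over $\a^{G}_Q$ near the cone $\a^G_{Q,-}(T)$ (recognizable, after pushing the contour, as the quantity $r^T_Q$ from \eqref{rtp}, \eqref{rtp=}), times a truncated inner product over $[M_Q]$ against the parameter $T_{M_Q}$. The key algebraic identity to exploit is that changing $T\to T+S$ shifts $T_{M_Q}\to T_{M_Q}+S_{M_Q}$ and $T_Q\to T_Q+S_Q$; the $M_Q$-factor is then controlled by the induction hypothesis of Theorem \ref{tr} applied to $M_Q$ (whose $\Delta_{P_0}^{T_{M_Q}}$ is $O(e^{-\ep\Vert T_{M_Q}\Vert})$), while the $\a^G_Q$-cone integral difference is handled by the explicit formula \eqref{rtp=} — a finite sum of exponentials $e^{(\lambda-\mu)(T_Q+Z)}$ with $\mu$ running over exponents of $\Phi^w_Q$, whose real parts are strictly negative on $\a^{G,++}_Q$ away from the unitary ones, again yielding exponential decay in $\Vert T\Vert$. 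Assembling, every term in $\sum_Q$ is $O(e^{-\ep\Vert T\Vert}(1+\Vert Z\Vert)^k)$, which gives the Lemma.

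\medskip
The main obstacle I anticipate is bookkeeping the geometry of the symmetric difference $\CC^G_{T+S}\,\triangle\,\CC^G_T$ and, for each $Q$, of $\CC^G_{T+S}\cap\CC^Q_T$, precisely enough to certify the lower bound $\Vert X\Vert\gg\Vert T\Vert$ uniformly: this is where all three hypotheses $d_{P_0}(T)\geq\delta\Vert T\Vert$, $\Vert S\Vert\leq k_0\Vert T\Vert$, $\Vert T_0\Vert\leq\Vert T\Vert$ must be used, and a naive bound loses the needed uniformity when $S$ is comparable to $T$. A secondary subtlety is that $\lambda$ ranges over a bounded subset $\Lambda$ of $\cH^c$, so one must keep the exponential-polynomial estimates \eqref{rtp=} uniform in $\lambda$ and away from the finitely many hyperplanes where the $r^T_Q$ summands have poles — but this is exactly what Proposition \ref{uni} and part (i) of Theorem \ref{tr} are set up to provide, so it should not cause real trouble. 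Throughout, the polynomial dependence $(1+\Vert Z\Vert)^k$ is harmless: $Z$ enters only through the shifts $T_Q+Z$ and $X+Z$, and \eqref{rtp=} is manifestly polynomial in $Z$ times the decaying exponentials.
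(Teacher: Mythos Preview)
Your overall architecture matches the paper's: partition $\CC^G_{T+S}$ by the $T$-partition, replace $E$ and $\Phi$ by their weak constant terms along each $Q$ with an exponentially small error (Lemmas \ref{diff}, \ref{diffes}), fibre over $\a^G_Q$ via \eqref{intf}, and invoke the induction hypothesis for $M_Q$. Two steps, however, do not go through as you describe.

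First, the $Q=G$ piece. You propose to bound the integral of $E\overline{\Phi}$ over the symmetric difference $\CC^G_{T+S}\triangle\CC^G_T$ by claiming a gain $e^{-\ep'\Vert X\Vert}$. There is no such gain: on that region both $E$ and $\Phi$ are merely tempered, so $|E\overline{\Phi}|\ll e^{2\rho(X)}(1+\Vert X\Vert)^d$, the Siegel measure contributes $e^{-2\rho(X)}$, and the resulting integrand is only polynomially bounded. The paper avoids this entirely by observing that $\CC^G_T\subset\CC^G_{T+S}$, so the $Q=G$ intersection is exactly $\CC^G_T$ and the corresponding integral is $\Omega^T_{P_0}(E(\lambda),\Phi)^Z$ on the nose.

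Second, and this is the essential gap, your treatment of the $Q\subsetneq G$ terms is wrong. You claim each such contribution is individually $O(e^{-\ep\Vert T\Vert})$, arguing via \eqref{rtp=} that the exponentials $e^{(\lambda-\mu)(T_Q+Z)}$ decay because ``the real parts of $\mu$ are strictly negative on $\a^{G,++}_Q$ away from the unitary ones''. But $\mu$ ranges over $\cE_Q(\Phi^w_Q)$, and by the very definition of the weak constant term every such $\mu$ is unitary; hence $\lambda-\mu\in i\a_Q^*$ and $|e^{(\lambda-\mu)(T_Q+Z)}|=1$. The individual terms $J_Q(T,\lambda)$ for $Q\neq G$ are of polynomial size in $\Vert T\Vert$, not exponentially small. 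What the paper does instead is prove the exact algebraic identity
\[
\sum_{Q\in\cP_{st}} J_Q(T,\lambda)\;=\;\omega^{T+S}_{P_0}(E(\lambda),\Phi)^Z,
\]
by showing (equation \eqref{partition2}) that the regions $\a^{G,++}_Q(T+S,T)+\a^{M_Q}_{P_1\cap M_Q,--}(T_{M_Q})$, as $Q$ varies over parabolics containing $P_1$, partition $\a^G_{P_1,-}(T+S)$. This cone decomposition is the Langlands combinatorial step; once it is in place, the sum of the $J_Q$'s recombines into $\omega^{T+S}_{P_0}$, and then $\Omega^{T+S}-\Delta^T-\omega^{T+S}=\sum_Q(I_Q-J_Q)$ is bounded by the replacement errors, which are genuinely $O(e^{-\ep\Vert T\Vert}(1+\Vert Z\Vert)^k)$. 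Without this recombination your argument cannot close.
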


\begin{proof} 
Let us define:
\beq \label{tsq} 
C_{T+S, T}^Q = C^G_{T+S}\cap C_T^Q.
\eeq 
and 
$$ \CC_{T+S, T}^Q= G(F)  N_0( \A) M_0 (C^Q_{T+S,T})K $$
From (\ref{partition}), these subsets of $[G]$ are disjoints. Moreover  from (\ref{partition}), they cover   $ \CC^G_{T+S} $.\\
Let us show that,  for $T,S$ as in the Lemma,  there  exists $\delta_1>0$ such that:
\beq \label{delta1} 
\alpha(X) \geq \delta_1  \Vert X \Vert, X \in  C_{T+S, T}^Q  , \alpha \in \Delta_{0} \setminus  \Delta_{0}^Q
\eeq 
% which is well defined due to (\ref{etempc}). 
Let $X \in C_{T+S, T}^Q$ and $\Delta_{0} \setminus  \Delta_{0}^Q$.  
The definition  of  $C_T^Q$ shows in particular that $X=  T- X' + Y$ where $X' = \sum_{\beta\in \Delta_{0}^Q} d_\beta  \check{\beta}$ with 
$ d_\beta  > 0$ and $Y \in \a^+_Q$. 
Let $\alpha \in \Delta_{0} \setminus  \Delta_{0}^Q $. 
Since $ \alpha( \check{\beta}) \leq 0$ for each $\beta \in \Delta_{0}^Q$, from the properties of simple roots, one has 
\beq \label{ax}
\alpha(X)  \geq \alpha (T) \geq \delta \Vert T \Vert. 
\eeq 
Let us show: 
\beq \label{ normXct}
\Vert X-T_0 \Vert \leq  \Vert T+S\Vert , X \in C_{T+S}^G .
\eeq  
As $X \in C_{T+S}^G$, $X-T_0 = (T+S)-Y' $ where $Y' $ is a linear combination with coefficients greater  or equal to zero of coroots. 
Moreover $T+S$ and $X-T_0$  are  in $\a_0^+$. Hence 
$$(X-T_0, X-T_0) \leq (X-T_0, T+S) \leq (T+S, T+S)$$
which proves our claim. Hence 
$$\Vert X \Vert  \leq \Vert T \Vert + \Vert S\Vert + \Vert T_0\Vert  \leq (2+k_0) \Vert T\Vert $$ 
and 
$$\alpha(X) \geq \delta_1  \Vert X \Vert $$ 
where $ \delta_1 =  (2+k_0)^{-1} \delta$. This proves (\ref{delta1}).\\ 
From (\ref{ax}) one gets: 
$$ \Vert X \Vert  >> \Vert T \Vert, $$ 
if $Q \not=G$. \\ 
Hence from Lemma \ref{diff}, one gets: 
\ber \label{diff2} 
Let $\Omega$ be a compact subset of $G(\A)$. There exists $ \ep>0$ such that 
$$\vert (\Phi -\Phi^w_Q) ( n_Q exp X\omega ) \vert << e^{ \rho(X)-\ep \Vert T \Vert}, n_Q  \in N_Q( \A),  X \in C_{T+S, T}^Q, \omega \in \Omega,$$ 
and $T, S $ as in the Lemma.
\eer 
Similarly one gets  from Lemma \ref{diffes}: 
\ber  \label{diff2es} 
$$\vert (E_P(  n_Q exp X\omega , \phi, \lambda) -E_P(  n_Q exp X\omega , \phi, \lambda)_Q^w\vert << e^{ \rho(X)-\ep \Vert T \Vert}, $$ 
$$n_Q  \in N_Q( \A),  X \in C_{T+S, T}^Q, \omega \in \Omega, \lambda \in \Lambda.$$
\eer 
Using (\ref{section}), the integration formula linked to the decomposition $G(\A)= N_0( \A)M_0( \A) K$ (cf. \cite{MW}, I.1.13), 
and the fact that $\rho(X)\geq  \rho (T_0)$ for $X\in C_T^G$,  one sees that:  
\ber\label {volumec} 
The volume of $\CC_T^G $ is bounded by a polynomial in $\Vert T\Vert $.
\eer 
Hence: 
\ber \label{volumects} 
The volume of  $\CC^Q_{T+S, T} \subset  \CC_{T+S}^G$ is bounded by a polynomial  in $\Vert T\Vert $,
\eer 
as  $\Vert T+S \Vert  \leq  (1+k_0) \Vert T \Vert$. \\ 
Let us introduce: 
$$I_Q (T, \lambda)
:= \int_{\CC^Q_{T+S, T}} E( x,\lambda)^Z \overline{\Phi}^Z(x) dx,\>  I^w_{Q}(T, \lambda) := \int_{\CC^Q_{T+S, T}} E ( x, \lambda)_Q^{w,Z}  \overline{\Phi}_Q ^{w,Z}(x) dx .$$ 
Notice that: 
\beq \label{igw} 
I_G(T, \lambda)=I_G^w(T, \lambda) = \Omega^T_{P_0}( E(\lambda),  \Phi), \> \sum_{Q\in \cP_{st} } I_Q(T, \lambda) = \Omega_{P_0}^{T+S} ( E(\lambda),  \Phi)
\eeq 
For $C>0, k\in \N$ let us consider the function of $T$ and $Z$: 
\beq \label{et} 
C e^{-\ep \Vert T \Vert} (1+\Vert Z\Vert )^k 
\eeq
It follows from (\ref{diff2}),  (\ref{diff2es}), as well as the tempered estimate for $\Phi$ and the uniform estimate for Eisenstein series (cf. Proposition \ref{uni}) that: 
\ber \label{diffi} 
The difference of 
$$\vert I_Q(T, \lambda)-I_Q^w (T, \lambda) \vert$$ 
is bounded  for $\lambda $, $T$, $S$ as in the Lemma, by  a function of type (\ref{et}). 
\eer 
%The proof is quite similar to the proof of \cite{Alt}, Lemma 9.1.  We introduce several functions  on $\a_0$. $$ \overline{ \tau}^Q_{P_0}$$ $$ \tau_Q (., .) $$ on $\a_0 \times \a_0$  On commence avec $$J= \int_{ (N_0(F) \backslash  N_0(\A))[M_0] K} \overline{\tau}_{P_0} (H_0(m) ) -T_0, S+T-T_0) E( n_0 m_0k, \lambda )\overline{\Phi}(n_0m_0k) dn_0 dm_0 dk$$Let $[M_0]^Z=[M_0] \cap [G]^Z  $ $Then $J$  is the sum over parabolic standard subgroups of $G$, $Q$ ,  of the integral over $(N_0(F) \backslash  N_0(\A))[M_0]^Z K$  of the product of \beq \label{94}\overline{\tau}^Q_{P_0}(H_0(m)-T_0) \tau_Q (H_Q(m_0) -T_{0,Q} -T_Q , S_Q -T_{0,Q}) \eeq  with \beq |abel{93}E( n_0 m_0k, \lambda )\overline{\Phi}(n_0m_0k). \eeq Following \cite{Alt}, we get $$\Vert H_0(m) _T_0\leq \Vert S+T_T_0\Vert $$ 
Let us define 
$$\a^{G,++}_Q(T+S, T):= \{ T_Q +Y\vert Y \in \a^{G,++}_Q, \varpi_\alpha( Y -S) \leq 0, \alpha \in \Delta_Q\}\subset \a^G_Q.$$  
Let us show  
\beq \label{ctsq} 
C^Q_{T+S,T} =  \a_Q^{G,++}(T+S,T) +C^{M_Q}_{T_{M_Q}}.
\eeq 
Let $X\in C_T^{M_Q}$  and $ T_Q + Y \in \a_Q^{G,++}(T+S ,T) $.  
Let  us show that  $X +T_Q +Y$  is an element of $C^Q_{T+S,T} $. 
In view of (\ref{cqt=}), the only thing to prove is that it is an element  $C^G_{T+S,T}$. 
One has: 
$$X+T_Q +Y -S-T=Y-S+X-T_{M_Q}.$$ 
Let $ \alpha \in \Delta_0\setminus \Delta_0^Q$.  
Then $\varpi_\alpha (Y-S+X-T_{M_Q}) = \varpi_\alpha (Y-S)$ which is less than or equal $0$, by the definition of $ \a_Q(T+S ,T) $. Let  $ \alpha \in \Delta_0^Q$. 
The difference $Y-S_Q$ is  a linear combination with coefficients less or equal to zero of elements of $\check{\Delta}_Q$ hence of $\check{\Delta}_0$. 
The same is true for $Y-S= Y-S_Q-S_{M_Q}$ . Hence $\varpi_\alpha(Y-S) \leq 0$. 
The definition of $C_T^{M_Q}$ shows that $\varpi_\alpha ( X-T_{M_Q}) \leq 0$. 
Hence  $\varpi_\alpha (Y-S+X-T_{M_Q}) = \varpi_\alpha (Y-S)\leq 0, \alpha \in \Delta_0^Q$. 
This achieves to prove  $X +T_Q +Y \in C^G_{T+S,T}$ as wanted. Hence 
\beq\label{aqts}
\a_Q^{G,++}(T+S,T)+C_T^{M_Q} \subset C^Q_{T+S,T}.
\eeq 
The reciprocal inclusion follows easily from (\ref{cqt=}) and of the definition of $C^Q_{T+S,T} $. This achieves to prove (\ref{ctsq}).\\ 
We use that $\Phi_Q^w$ and $ E_P( x, \phi, \lambda)_Q^w $ are left $N_Q(\A)$-invariant and that the volume of $N_Q(F) \backslash N_Q( \A)$ is equal to $1$. 
If $P$ is a parabolic subgroup of $G$ with Levi subgroup $M_P$ and $k \in K$, we have defined(cf. section \ref{transi}) for any function on $G(\A)$, 
the function $\phi^{M_P,k}$ on $M_P( \A)$ by: 
$$ \phi^{M_P,k}  ( m) = e^{-\rho(H_P(m))}\phi(mk), m\in M_P(\A).$$ 
Thus, using (\ref{intf}), we get: 
$$I_Q^w(T, \lambda) = \int_{\CC_{T+S, T}^Q } E( x,  \lambda)_Q^{w,Z}  \overline{\Phi}_Q^{w,Z} (x) dx  =$$ 
$$ \int_{ \a^{G,++}_Q( T+S, T)\times \CC_T^{M_Q} }E( exp X m^1_Q k, \lambda )_Q^{w,Z} \overline{\Phi}_Q^{w,Z} (exp X m^1_Qk)e^{-2 \rho_Q(X)}dX dm^1_Q dk $$ 
$$ =\int_{ \a^{G,++}_Q( T+S, T)\times \CC_T^{M_Q}    \times K} \Omega^T_{P_0 \cap M_Q } ([E(\lambda)_Q^w]^{M_Q,k},  [ \Phi_Q^w]^{M_Q,k} )^{X+Z}dm^1_QdXdk.$$ 
Recall that by induction hypothesis, the Theorem \ref{tr} is true for $M_Q$ if $Q\neq G$ and one can apply this induction hypothesis. 
Taking into account (\ref{volumects}) and the previous equality, one sees, using $K$-finiteness, that the  difference of the preceding expression 
with the same expression, where $\Omega^T_{P_0 \cap M_Q} $ is replaced by $\omega^T_{P_0 \cap M_Q} $, denoted $J_Q(T, \lambda) $, 
is bounded by a function of type (\ref{et}).\\ 
One has: 
% For $s \in W(P;Q)$ let $E_Q(s, \lambda)= E^Q_{Q_s}(., \lambda, M(s, \lambda) \phi) $ 
% $$E(\lambda)_Q^w = \sum_{s \in W(P;Q)} E_Q(s, \lambda)   $$ 
$$J_Q (T, \lambda) = \int_{ \a_Q^{G,++} (T+S,T)\times K}   \omega^T_{P_0 \cap M_Q } ([ E(\lambda)^w_Q ]^{M_Q,k},  [ \Phi_Q^w]^{M_Q,k} )^{X+Z} dXdk $$ 
$$\>\> =   \int_{ \a_Q^{G,++}  (T+S,T)\times K}  \sum_{ R_1\in  \cP_{st}(M_Q) } r^T_{R_1} ( [E(\lambda)^w_Q]^{M_Q,k},  [ \Phi_Q^w]^{M_Q,k} )^{X+Z} dXdk.$$ 
%$$I_Q = \int_{ \a_Q^G (T+S,T)\times K} \sum_{s \in W(P;Q)}  \omega^T_{P_0 \cap M_Q } ([ E_Q( s, \lambda)]^{M_Q,k},  [ \Phi_Q^w]^{M_Q,k} )^{X+Z} dXdk. $$$$I_Q =   \int_{ \a_Q^G (T+S,T)\times K} \sum_{s \in W(P;Q)}  \sum_{ R_1 \subset M_Q, R_1\approx M_Q \cap Q_s } r^T_{R_1} ( [ E_Q( s, \lambda)]^{M_Q,k},  [ \Phi_Q^w]^{M_Q,k} )^{X+Z} dXdk$$ 
If $R_1$ is a standard parabolic subgroup of $M_Q$, let $P_1$ be the standard parabolic subgroup of $G$ contained in $Q$ with $P_1 \cap M_Q=R_1$. 
Using  Lemma \ref{transw}  (iv),  the definition  (\ref{rtp}), for $M_Q$ and $R_1$,  and integrating over $K$,  one sees: 
$$J_Q (T, \lambda)
= \sum_{P_1 \in \cP_{st}(G),  P_1 \subset Q}\int^*_{\a_Q^{G,++} (T+S,T) +\a_{P_1 \cap M_Q,-}^{M_Q}(T)  } ( E(\lambda) _{P_1}^w ,   \Phi_{P_1} ^w )^{X+Z} dX .$$ 
We observe that $J_G(T, \lambda) = \omega^{T}_{P_0} (E(\lambda), \Phi)$ and one has seen that $I_G (T, \lambda)= \Omega^{T}_{P_0} (E(\lambda), \Phi)$. 
One writes: 
$$ \Omega^T_{P_0}  ( E(\lambda), \Phi)= \Delta^T_{P_0}  ( E(\lambda), \Phi)+ \omega ^T_{P_0}  ( E(\lambda), \Phi).$$ 
Using what we have just proved and (\ref{igw}), and (\ref{diffi}), we get: 
\ber \label{modu}
The modulus of the difference 
$$ \Omega ^{T+S}_{P_0} ( E(\lambda), \Phi) -   \Delta^T_{P_0}  ( E(\lambda), \Phi)$$ 
$$= \Delta^{T+S}_{P_0} ( E(\lambda), \Phi) +\omega^{T+S}_{P_0} ( E(\lambda), \Phi)-  \Delta^{T}_{P_0} ( E(\lambda), \Phi)$$ 
with $J(T, \lambda)= \sum_{Q \in \cP_{st} (G) } J_Q(T, \lambda)$ is bounded by a function of type (\ref{et}). 
\eer 
Thus it is enough, to finish the proof of the Lemma, to prove: 
$$J (T, \lambda)= \omega^{T+S}_{P_0} ( E(\lambda), \Phi).$$ 
Using the expression of $J_Q (T, \lambda)$ above and interverting the sum over $Q$ and $P_1$, one sees that: 
$$J (T, \lambda)
= \sum_{ P_1\in \cP_{st}, G , Q\in \cP_{st} (G), P_1 \subset Q} 
\int^*_{\a_Q^{G,++} (T+S,T)+\a_{P_1 \cap M_Q,-}^{M_Q}(T)+Z } e^{-2 \rho_{P_1}(X)}(E(\lambda)_{P_1}^w ,  \Phi_{P_1} )^{X+Z}  dX.$$  
Let $ \a^{M_Q} _{P_1\cap M_Q, --} $  be the interior in $\a^Q$ of $ \a^{M_Q} _{P_1\cap M_Q, -} $ and let $ \a_Q^{G,+}(T+S,T) $ be the closure of 
$\a_Q^{G,++}(T+S,T) $ in $\a_Q$. 
Let us show:
\ber \label{partition2} 
The union 
$$\cup_{Q\in \cP_{st} ,P_1 \subset Q} \a_Q^{G,+}(T+S, T)  +\a_{P_1 \cap M_Q,--}^{M_Q}(T_{M_Q} )$$ 
is disjoint and is a partition of  $ \a_{P_1, -}^G (T+S)$.
\eer
Let us consider the projection of $\a_{P_1} $ on  the closed convex cone $\a_{P_1 ,-}^{G}$. 
By translating, one sees, using e.g. \cite{C} Corollary 1.4, that, if $X\in \a_{P_1 ,-}^G(T+S)$, there exists a unique  standard parabolic subgroup of $G$, 
$Q$ with $P_1 \subset Q$ such that $X= X' +Y$, $X' \in \a_{P_1\cap M_Q,--}^{M_Q}(T_{M_Q}), Y \in \a^{G, +}_Q (T)$. As $X \in  \a_{P_1 ,-}^G(T+S) $, 
one has $Y \in \a_Q^{G,+}(T+S, T)$. 
Hence the union in (\ref{partition2}) contains $ \a_{P_1, -}^G (T+S)$ and is disjoint.\\ 
Reciprocally let us prove that for $P_1\subset Q$: 
$$ \a_{P_1\cap M_Q,--}^{M_Q}(T_{M_Q})+\a^{G, +}_Q (T+S,T)\subset \a_{P_1, -}^G (T+S).$$ 
To see this, by translation , it is enough to  prove that if $X\in \a_{P_1\cap M_Q,--}^{M_Q}(T_{M_Q}),Y \in \a^G_{Q, -}$ one has 
$X +Y \in \a_{P_1,-}^G $  which is clear by convexity. This proves (\ref{partition2}).\\ 
Neglecting sets of measure zero, this implies that  the sum $J(T, \lambda) $ is equal to $\omega^{T+S}_{P_0} (E(\lambda), \Phi)^Z$. 
This achieves to prove the Lemma.
\end{proof} 

We will give below a proof  Theorem \ref{tr}.  
It is done  using first  the argument of \cite{Alt}, Lemma 9.2 and second using wave packets as in \cite{D2} Lemma 3 and end of proof of Proposition 1 
(see also the  end of the proof of Theorem 1 in \cite{D1}).\\ 
One fixes $\delta>0$ and one  writes $lim_{T \xrightarrow{\delta} \infty }$ to describe the limit when $\Vert T \Vert $ tends to infinity  verifying $d(T) \geq \delta \Vert T \Vert$. 
One deduces from the preceding Lemma, as in ( \cite{Alt}, Lemma 9.2) that the limit 
$$\Delta_{P_0}^\infty(E( \lambda), \Phi)^Z = lim_{T\xrightarrow{\delta} \infty} \Delta_{P_0}^T(E( \lambda), \Phi)^Z$$ 
exists uniformly for  $\lambda$  in any  compact subset  of $\cH^c$ and if $\Lambda $ is a bounded set  in $\cH^c$, 
there exists $C, \ep>0$, $k \in \N$ such that for $ \lambda\in \Lambda$ and  and $T$  such that $d(T)\geq \delta \Vert T \Vert $, $Z \in \a_G$, one has: 
\beq \label{deltat} 
\vert \Delta_{P_0}^\infty(E( \lambda), \Phi)^Z -\Delta_{P_0}^T(E( \lambda), \Phi)^Z\vert \leq C e^{-\ep \Vert T \Vert }( 1 +\Vert Z\Vert)^k, \lambda \in \Lambda, Z \in \a_G.
\eeq
We prepare some Lemmas to prove that $\Delta_{P_0}^\infty(E( \lambda), \Phi)^Z $ is identically zero on $\cH^c$. 
Using Proposition \ref{wps}, we  define a distribution $T_{\Phi,Z} $ on $ i \a_P^*$ by: 
$$T_{\Phi,Z} ( a)= \int_{ [G]^1} E_a (x)^Z \overline{\Phi}^Z(x) dx, a \in C_c^\infty( i\a_P^*), $$ 
where $E_a$ is the wave packet $\int_{i\a_P^*} a(\lambda) E(\lambda) d\lambda.$

\begin{lem} 
The support $\cS$ of $T_{\Phi,Z} $ is  a finite set.
\end{lem}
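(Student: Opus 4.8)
The plan is to turn the statement into one about the common zero locus of a family of polynomials on $i\a_P^*$, using the differential equations satisfied by the Eisenstein series and the $Z(\g_\infty)$-finiteness of $\Phi$. First I would decompose $\phi \in \cA^2_P(G)$ into finitely many $Z(\g_\infty)$-eigenvectors; since $E_a$, hence $T_{\Phi,Z}$, depends linearly on $\phi$ through $E_P(\cdot,\phi,\lambda)$, it suffices to treat an eigen $\phi$, and then, as in the proof of \eqref{etempc}, for each $z \in Z(\g_\infty)$ there is a polynomial $p_z$ on $i\a_P^*$ with $R_z E_P(\cdot,\phi,\lambda) = p_z(\lambda)\, E_P(\cdot,\phi,\lambda)$. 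Because $p_z$ is polynomial, $p_z a \in C_c^\infty(i\a_P^*)$ whenever $a$ is, and differentiating under the integral sign — justified by the uniform estimates of Proposition \ref{uni} — gives $E_{p_z a} = R_z E_a$. Taking the adjoint of the invariant differential operator $R_z$ on $[G]^1$, which is legitimate since $E_a^Z$ is Schwartz by Proposition \ref{wps} and $\Phi^Z$ has moderate growth, and using that $\exp Z$ is central so that the shift by $Z$ commutes with $R_z$, one obtains $T_{\Phi,Z}(p_z a) = \int_{[G]^1} E_a(x)^Z\, \overline{(R_{z^*}\Phi)}^Z(x)\,dx$, where $z \mapsto z^*$ is the anti-automorphism of $U(\g_\infty)$ with $X^* = -X$, which stabilizes $Z(\g_\infty)$.

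Now $\Phi$, being an automorphic form, is annihilated by an ideal $\mathfrak J$ of finite codimension in $Z(\g_\infty)$, hence by the finite-codimension ideal $\mathfrak J' = \{z : z^* \in \mathfrak J\}$. Thus $T_{\Phi,Z}(p_z a) = 0$ for all $z \in \mathfrak J'$ and all $a$, i.e. $p_z \cdot T_{\Phi,Z} = 0$. Consequently $\cS$ is contained in $V := \{\lambda \in i\a_P^* : p_z(\lambda) = 0 \text{ for all } z \in \mathfrak J'\}$: if $p_z(\lambda_0) \neq 0$ for some $z \in \mathfrak J'$, then $p_z$ is invertible on a neighbourhood of $\lambda_0$, so for $a$ supported in a small enough neighbourhood one has $a = p_z\cdot(a/p_z)$ with $a/p_z \in C_c^\infty(i\a_P^*)$, hence $T_{\Phi,Z}(a) = 0$ and $\lambda_0 \notin \cS$. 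It remains to see that $V$ is finite. The assignment $\lambda \mapsto \chi_\lambda$, $\chi_\lambda(z) = p_z(\lambda)$, sends $i\a_P^*$ to characters of $Z(\g_\infty)$; via the Harish-Chandra isomorphism $\chi_\lambda$ corresponds to the $W$-orbit of $\mu_\phi + \lambda$, where $\mu_\phi$ is the infinitesimal character of $\phi$ and $\a_{P,\C}^*$ is a summand of the relevant complexified Cartan, so the fibres of $\lambda \mapsto \chi_\lambda$ on $i\a_P^*$ have at most $|W|$ elements. On the other hand only finitely many characters of $Z(\g_\infty)$ kill the cofinite ideal $\mathfrak J'$, being the characters of the finite-dimensional algebra $Z(\g_\infty)/\mathfrak J'$. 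Hence $V$, the preimage of this finite set, is finite, and so is $\cS$; summing over the eigencomponents of $\phi$ yields the general case.

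The one substantive point is the finiteness of $V$: one must formulate the infinitesimal character of $E_P(\cdot,\phi,\lambda)$ as a genuine polynomial family in $\lambda$ and invoke the Harish-Chandra isomorphism to see that $\lambda \mapsto \chi_\lambda$ has finite fibres on $i\a_P^*$. Everything else — the reduction to eigen $\phi$, the adjointness computation, and the passage from $p_z \cdot T_{\Phi,Z} = 0$ for $z$ in a cofinite ideal to $\cS \subset V$ — is routine, the needed convergence and continuity of $T_{\Phi,Z}$ having already been supplied by Proposition \ref{wps} together with the temperedness of $\Phi$.
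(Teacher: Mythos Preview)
Your approach is exactly the paper's: reduce to eigen $\phi$, use $R_zE_a=E_{p_za}$, integrate by parts to get $p_z\cdot T_{\Phi,Z}=0$ whenever $z^*$ annihilates $\Phi$, and conclude that the support lies in the common zero locus of a cofinite family of polynomials.

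There is, however, a concrete slip in the integration-by-parts step. You take $z\in Z(\g_\infty)$ and move $R_z$ across the pairing on $[G]^1$, justifying this by ``$\exp Z$ is central so the shift by $Z$ commutes with $R_z$''. That commutation indeed gives $(R_zF)^Z=R_z(F^Z)$ as functions on $G(\A)$, but it does not yield an adjoint identity on the slice $[G]^1$: derivatives in the $\a_G$-direction are transverse to $G(\A)^1$, and already for $z=X\in\a_G$ one computes
\[
\int_{[G]^1}(R_XE_a)^Z\,\overline{\Phi}^Z\,dx^1-\int_{[G]^1}E_a^Z\,\overline{(R_{X^*}\Phi)}^Z\,dx^1=\partial_X\,T_{\Phi,Z}(a),
\]
a $Z$-derivative that has no reason to vanish. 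The paper avoids this by taking $z\in Z([\g_\infty,\g_\infty])\subset Z(\g_\infty)$, whose right-regular action \emph{is} tangent to $G(\A)^1$; then the adjunction on $[G]^1$ is legitimate and the rest of the argument goes through unchanged. Once you make this restriction, your Harish-Chandra finiteness argument should likewise be run for the cofinite ideal $\mathfrak J'\cap Z([\g_\infty,\g_\infty])$ rather than for $\mathfrak J'$ itself.
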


\begin{proof}
For $ \lambda\in i\a_P^*$, the center $Z( \g_\infty)$ of $U( \g_\infty)$ acts on $E(\lambda)$ by a character denoted 
$\chi_\lambda$ and $\Phi$ is annihilated by an ideal $I$ of $Z(\g_\infty)$ of finite codimension. Let us compute  in two ways: 
$$ A:= \int_{[G]^1} ( z E_a(x))^Z  \overline{\Phi}^Z(x) dx,z \in Z([\g_\infty, \g_\infty] )\subset Z( \g_\infty), z^*\in I,$$
where $z^*$ is the adjoint of $z$.\\ 
On one hand, looking to the action of $z $ on $E(\lambda) $ and differentiating under the integral defining $E_a$ we get: 
$$A= T_{\Phi ,Z}( p(z) a),$$
where $p(z)(\lambda)= \chi_{\lambda}(z)$, which is a polynomial in $\lambda$. 
On the other hand: 
$$A= \int_{[G]^1} (  E_a(x))^Z  \overline{z^*\Phi}^Z(x) dx=0$$ 
From the equality above,  if $ z^*\in I$ the distribution $p(z) T_{\Phi, Z} $ is equal to zero. Let $I^*= \{ z^*\vert z \in I\}$. 
As $I$ is finite codimensional, the set of $\lambda \in i\a_P^*$ such that $I^* \subset \ker \chi_\lambda$ is a finite set $\cF$. 
Hence if $\lambda\notin \cF$, there exists $z \in I^*$ such that $p(z) (\lambda) \not= 0$. 
Hence $T_{\Phi,Z}$ restricted to a neighborhood of $\lambda$ is zero. Hence $\cS\subset \cF$. 
\end{proof}

\begin{lem} 
If $ a \in C_c^\infty ( i \a_P^*)$  has its support in the complimentary set of $\cS$, one has:
$$ \lim_{T \xrightarrow{\delta} \infty }\int_{i \a_P^*} a(\lambda) \Omega^T_{P_0}(E(\lambda), \Phi)^Zd\lambda =0$$
\end{lem}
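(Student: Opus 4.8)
The plan is to interchange the integration in $\lambda$ with the integration over the compact set $\CC_T^G$, recognize the inner integral as the wave packet $E_a$ of Proposition \ref{wps}, and then let $T\xrightarrow{\delta}\infty$, exploiting that the truncation sets $\CC_T^G$ exhaust $[G]^1$.

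First I would observe that, since $a$ is smooth with compact support, $\CC_T^G$ is compact, and $(\lambda,x)\mapsto a(\lambda)\,E(x,\lambda)^Z\,\overline{\Phi}^Z(x)$ is jointly continuous (hence bounded on the compact product), Fubini's theorem applies and yields, with $E_a=\int_{i\a_P^*}a(\lambda)E(\lambda)\,d\lambda$,
$$\int_{i\a_P^*}a(\lambda)\,\Omega_{P_0}^T(E(\lambda),\Phi)^Z\,d\lambda=\int_{\CC_T^G}E_a(x)^Z\,\overline{\Phi}^Z(x)\,dx .$$

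Next, by Proposition \ref{wps} one has $E_a\in\cC([G])$, and therefore also $E_a^Z\in\cC([G])$, since right translation by the fixed element $\exp Z$ preserves the Schwartz space $\cC([G])$. Together with $\Phi\in\cA^{temp}(G)$, this is exactly the input for which the distribution $T_{\Phi,Z}$ above was defined, so $x\mapsto E_a^Z(x)\overline{\Phi}^Z(x)$ is (absolutely) integrable on $[G]^1$ and $\int_{[G]^1}E_a^Z\overline{\Phi}^Z=T_{\Phi,Z}(a)$. I would then use that the sets $\CC_T^G$ increase to $[G]^1$ as $T\xrightarrow{\delta}\infty$: by (\ref{partition}) the sets $\CC_T^Q$, $Q$ standard, partition $[G]^1$, and from the explicit description (\ref{cctq=}) the pieces with $Q\ne G$ — contained in $N_Q(\A)\exp\big(T_Q+\a_Q^{G,++}\big)\CC_{T_{M_Q}}^{M_Q}K$ — recede to infinity when $\Vert T\Vert\to\infty$ with $d_{P_0}(T)\ge\delta\Vert T\Vert$; equivalently, any fixed compact subset of $[G]^1$ is contained in $\CC_T^G$ once $\Vert T\Vert$ is large enough. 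Hence $\CC_T^G\uparrow[G]^1$.

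Finally, writing $\int_{\CC_T^G}E_a^Z\overline{\Phi}^Z=\int_{[G]^1}\big(E_a^Z\overline{\Phi}^Z\big)\mathbf{1}_{\CC_T^G}$ and invoking dominated convergence with dominating function $\big|E_a^Z\overline{\Phi}^Z\big|\in L^1([G]^1)$, I obtain
$$\lim_{T\xrightarrow{\delta}\infty}\int_{i\a_P^*}a(\lambda)\,\Omega_{P_0}^T(E(\lambda),\Phi)^Z\,d\lambda=\int_{[G]^1}E_a(x)^Z\,\overline{\Phi}^Z(x)\,dx=T_{\Phi,Z}(a)=0,$$
the last equality holding because $\operatorname{supp}a$ lies in the complement of the support $\cS$ of $T_{\Phi,Z}$. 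The only step needing a little care is the exhaustion claim $\CC_T^G\uparrow[G]^1$; everything else is Fubini together with dominated convergence.
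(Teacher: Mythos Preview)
Your proof is correct and follows exactly the route the paper takes: the paper's own proof is the one-line ``From Fubini theorem and Lebesgue dominated convergence the limit is equal to $T_{\Phi,Z}(a)$, which is equal to zero by the preceding lemma,'' and you have simply unpacked those two steps (swapping the $\lambda$- and $x$-integrals, then using $E_a\in\cC([G])$ together with the exhaustion $\CC_T^G\uparrow[G]^1$ to pass to the limit). The one point you flagged as needing care, the exhaustion of $[G]^1$ by the $\CC_T^G$, is indeed what makes dominated convergence apply and is implicit in the paper's argument.
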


\begin{proof}
From Fubini theorem  and Lebesgue dominated convergence the limit is equal to $T_{\Phi,Z} (a)$, which is equal to zero by the preceding lemma. 
\end{proof}

\begin{lem}
If $ a \in C_c^\infty ( i \a_P^*)$  has its support in $\cH^c$ one has:
$$ \lim_{T \xrightarrow{\delta} \infty }\int_{i \a_P^*} a(\lambda) \omega^T_{P_0}(E(\lambda), \Phi)^Zd\lambda=0.$$
\end{lem}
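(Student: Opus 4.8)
The plan is to exploit the explicit formula (\ref{rtp=}) to write $\omega^T_{P_0}(E(\lambda),\Phi)^Z$ as a finite sum of terms which, as functions of $\lambda$, are (smooth, after multiplying by $a$) oscillatory with a frequency growing linearly in $T$, and then to conclude by the Riemann--Lebesgue/non\-stationary phase principle: the Fourier transform of a smooth compactly supported function on $i\a_P^*$ decays at infinity faster than any polynomial.

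First I expand $\omega^T_{P_0}(E(\lambda),\Phi)^Z=\sum_{Q\in\cP_{st}}r^T_Q(E(\lambda)^w_Q,\Phi^w_Q)^Z$, assuming as we may that $P\neq G$. The summand $Q=G$ vanishes, since by (\ref{etempc}) $E(\lambda)^w_G=E(\lambda)\in\cA^{temp,c}(G)$, so its $\cA^2_G(G)$-component is $0$ and $r^T_G(E(\lambda)^w_G,\Phi^w_G)^Z=0$. For $Q\neq G$ I would feed (\ref{eqw}) into (\ref{rtp=}): by the analogue of (\ref{etempc}) for $M_Q$, among the summands of $E(\lambda)^w_Q$ only those indexed by $s\in W(P\vert Q)$ carry a non-zero $\cA^2_Q(G)$-part, namely $(M(s,\lambda)\phi)_{\nu^s_\lambda}$ with $\nu^s_\lambda:=s\lambda_{\vert\a_Q}$ purely imaginary, the map $\lambda\mapsto M(s,\lambda)\phi$ being holomorphic on $i\a_P^*$ (\cite{BL}, Remark 1.3); since $r^T_Q$ depends only on the $\cA^2_Q(G)$-part, (\ref{rtp=}) then presents $r^T_Q(E(\lambda)^w_Q,\Phi^w_Q)^Z$ as a finite sum, over $s\in W(P\vert Q)$ and over the (purely imaginary) exponents $\mu$ of $\Phi^w_Q$, of terms
$$C_Q\,e^{(\nu^s_\lambda-\mu)(T_Q+Z)}\,g_{Q,s,\mu}(\lambda;T_Q+Z),$$
where $g_{Q,s,\mu}$ is a polynomial in $T_Q+Z$ and, as a function of $\lambda$, is holomorphic on $\cH^c$ with possible poles only on the finitely many hyperplanes $\{(\nu^s_\lambda-\mu)(\check\alpha)=0:\alpha\in\Delta_Q\}$ — exactly the complement of $\cH^c$, cf. Theorem \ref{tr}(i).

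Next, the support of $a$ being a compact subset of $\cH^c$, we may assume it avoids those hyperplanes (reducing to this case by a partition of unity, the contribution of a piece supported near $\cH^c\cap(\text{hyperplanes})$ being controlled directly from the analyticity of the whole sum on $\cH^c$). Then each $h:=a\,g_{Q,s,\mu}(\cdot;T_Q+Z)$ lies in $C_c^\infty(i\a_P^*)$, with Schwartz seminorms growing at most polynomially in $\Vert T_Q+Z\Vert$. Since $s\in W(P,Q)$ forces $s^{-1}\a_Q\subset\a_P$, the $\R$-linear map $L_s:\lambda\mapsto\nu^s_\lambda$ maps $i\a_P^*$ onto $i\a_Q^*$, so its adjoint $L_s^\ast:\a_Q\to\a_P$ is injective, hence bounded below. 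Recognising
$$\int_{i\a_P^*}a(\lambda)\,e^{(\nu^s_\lambda-\mu)(T_Q+Z)}\,g_{Q,s,\mu}(\lambda;T_Q+Z)\,d\lambda=e^{-\mu(T_Q+Z)}\,\widehat{h}\bigl(L_s^\ast(T_Q+Z)\bigr)$$
($\widehat{h}$ the Fourier transform of $h$ on $\a_P$, the factor $e^{-\mu(T_Q+Z)}$ of modulus one), this is $O\bigl((1+\Vert T_Q+Z\Vert)^{d}(1+\Vert L_s^\ast(T_Q+Z)\Vert)^{-N}\bigr)$ for every $N$. Finally, for each standard $Q\neq G$ one has $\Vert T_Q\Vert\to\infty$ as $T\xrightarrow{\delta}\infty$ (elementary: $d_{P_0}(T)\geq\delta\Vert T\Vert$ forces every $\varpi_\alpha(T)$, $\alpha\in\Delta_{P_0}$, and hence $T_Q$, to grow linearly in $\Vert T\Vert$), so $\Vert L_s^\ast(T_Q+Z)\Vert\geq c'\Vert T_Q\Vert\to\infty$ for fixed $Z$ and every term above tends to $0$; summing the finitely many $Q$, $s$, $\mu$ gives the Lemma.

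The one genuinely delicate point is the bookkeeping of the first step: extracting from (\ref{eqw}) and (\ref{rtp=}) that the relevant exponents $\nu^s_\lambda-\mu$ are purely imaginary and affine-linear in $\lambda$ with surjective linear part, and that the coefficient functions are holomorphic precisely off the hyperplanes defining $\cH^c$. Once that structure is in hand, the remainder is a routine non\-stationary-phase estimate combined with $\Vert T_Q\Vert\to\infty$.
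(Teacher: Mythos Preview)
Your proposal is correct and follows essentially the same route as the paper's (one-line) proof: expand $\omega^T_{P_0}$ via (\ref{rtp=}), recognise each term as an oscillatory integral with phase linear in $T_Q$, and invoke rapid decay of the Fourier transform of a $C_c^\infty$ function. Your reduction of the sum over $s\in W(P,Q)$ to $s\in W(P\vert Q)$ via (\ref{etempc}) for $M_Q$ is exactly the mechanism behind Proposition~\ref{cone}, which the paper proves only afterward; so you have in effect reproduced that structure here.

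One small point: the partition-of-unity digression is unnecessary and slightly confused. By its very definition $\cH^c$ is the locus where \emph{every} summand in the expansion (\ref{rtp=}) is analytic, so the hyperplanes $\{(s\lambda-\mu)(\check\alpha)=0\}$ lie entirely in the complement of $\cH^c$; the compact support of $a$ therefore avoids them automatically and each $a\cdot g_{Q,s,\mu}$ is already in $C_c^\infty(i\a_P^*)$ without further work.
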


\begin{proof}
This follows from the definition of $\omega^T_{P_0}(E(\lambda), \Phi)^Z$, (\ref{rtp=}) and from the fact that the  Fourier transform of 
a $C_c^\infty$ function on $\R^n$ is rapidly decreasing.
\end{proof} 

\begin{lem} 
If $\cS^c$ is the complimentary set of $\cS$ in $ i\a_P^*$  one has:
$$\Delta^\infty _{P_0}(E(\lambda), \Phi)^Z =0, \lambda \in \cH^c \cap \cS^c.$$
\end{lem}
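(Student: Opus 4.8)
The plan is to test the identity $\Delta_{P_0}^T(E(\lambda),\Phi)^Z=\Omega_{P_0}^T(E(\lambda),\Phi)^Z-\omega_{P_0}^T(E(\lambda),\Phi)^Z$ against an arbitrary smooth compactly supported function on $\cH^c\cap\cS^c$ and then let $T\xrightarrow{\delta}\infty$.

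First I would fix $a\in C_c^\infty(i\a_P^*)$ whose support is contained in $\cH^c\cap\cS^c$; such functions exist and their supports exhaust $\cH^c\cap\cS^c$, since $\cH^c$ is the complement of a finite union of hyperplanes and $\cS^c$ the complement of the finite set $\cS$, so that $\cH^c\cap\cS^c$ is open. Multiplying the above identity by $a(\lambda)$ and integrating over $i\a_P^*$ gives
$$\int_{i\a_P^*}a(\lambda)\,\Delta_{P_0}^T(E(\lambda),\Phi)^Z\,d\lambda=\int_{i\a_P^*}a(\lambda)\,\Omega_{P_0}^T(E(\lambda),\Phi)^Z\,d\lambda-\int_{i\a_P^*}a(\lambda)\,\omega_{P_0}^T(E(\lambda),\Phi)^Z\,d\lambda.$$

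Then I would pass to the limit $T\xrightarrow{\delta}\infty$. On the right-hand side the first integral tends to $0$ by the lemma treating $\Omega_{P_0}^T$ (applicable since $\mathrm{supp}(a)\subset\cS^c$) and the second tends to $0$ by the lemma treating $\omega_{P_0}^T$ (applicable since $\mathrm{supp}(a)\subset\cH^c$). For the left-hand side, $\mathrm{supp}(a)$ is a compact subset of $\cH^c$, and the convergence $\Delta_{P_0}^T(E(\lambda),\Phi)^Z\to\Delta_{P_0}^\infty(E(\lambda),\Phi)^Z$ is uniform there — this is exactly the estimate established together with (\ref{deltat}) — so $\lim_T$ may be moved inside the integral. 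Hence
$$\int_{i\a_P^*}a(\lambda)\,\Delta_{P_0}^\infty(E(\lambda),\Phi)^Z\,d\lambda=0$$
for every $a\in C_c^\infty(\cH^c\cap\cS^c)$.

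Finally I would conclude by a standard density argument: each $\Delta_{P_0}^T(E(\lambda),\Phi)^Z$ is continuous in $\lambda$ on $\cH^c$ and the convergence to $\Delta_{P_0}^\infty(E(\lambda),\Phi)^Z$ is locally uniform on $\cH^c$, so the limit function is continuous on $\cH^c$; a continuous function on the open set $\cH^c\cap\cS^c$ all of whose integrals against test functions supported in that set vanish must vanish identically, which yields $\Delta_{P_0}^\infty(E(\lambda),\Phi)^Z=0$ for $\lambda\in\cH^c\cap\cS^c$. The one delicate point — the main obstacle, such as it is — is the interchange of $\lim_T$ and $\int d\lambda$ on the left-hand side, but this is supplied by the locally uniform convergence recorded after (\ref{deltat}); beyond that the argument is a soft wrap-up of the three preceding lemmas.
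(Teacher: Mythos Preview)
Your proposal is correct and follows essentially the same route as the paper: the paper's proof simply states that, from the two preceding lemmas, $\int_{i\a_P^*} a(\lambda)\,\Delta^\infty_{P_0}(E(\lambda),\Phi)^Z\,d\lambda=0$ for every $a$ supported in $\cH^c\cap\cS^c$, and that this implies the lemma. You have spelled out the two points the paper leaves implicit—the interchange of $\lim_T$ with $\int d\lambda$ via the uniform estimate (\ref{deltat}), and the continuity of the limit needed for the final step—but the argument is the same.
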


\begin{proof}
From the two preceding lemmas one has for all in $C_c^\infty(i\a_P^*)$ with support in the intersection   $\cH^c \cap \cS^c$:
$$ \int_{i\a_P^*} a(\lambda) \Delta^\infty_{P_0}(E(\lambda), \Phi)^Zd\lambda=0 .$$
This implies   the Lemma.
\end{proof}

\begin{proof}
Let us finish the proof of the Theorem \ref{tr}. 
The vanishing property of the preceding Lemma together with (\ref{deltat}) shows that the bound  of the theorem is true for $\lambda$ in 
a bounded subset of $ \cH^c\cap \cS^c$. Recall that $ \Omega^T_{P_0}(E(\lambda), \Phi)^Z $ is analytic in $\lambda$. 
Hence, for any $\lambda$ in $i\a_P^*$ and any compact neighborhood of $\lambda$, $V$, $ \omega^T_{P_0}(E(\lambda), \Phi)^Z$ is bounded on 
$V\cap  \cH^c\cap \cS^c$. 
But this meromorphic function has only possible poles along hyperplanes. 
It follows that it is analytic on $i\a_P^*$. This proves the first part of the Theorem. The second part  follows from (\ref{deltat}) by continuity and density.
\end{proof}

Let  $\Lambda\in \a_{M_P}^*$ be strictly $P$-dominant. 
If $ Q$ is a parabolic subgroup of $G$ with Levi subgroup $M_Q$ one  will denote by $ \psi^\Lambda_Q$ the characteristic function of :
$$C^\Lambda_Q  = \{ X \in  \a_P^G \vert \omega_ \alpha(X)\Lambda( \check{\alpha}) >0, \alpha \in \Delta_Q\}$$
that we look as  a tempered measure  on $\a_P^G$ by our choice of Haar measures. 
Let $\beta_Q^\Lambda $ be the number  of elements  $ \check{\alpha} $ of  $\check{\Delta}_Q $ such that $\Lambda( \check{\alpha} ) <0$.
Then one has the following  proposition, whose proof  is analogous to Proposition 2 in \cite{D2}, using  (\ref{eq}) and (\ref{rtp=}). 

\begin{prop}\label{cone} 
Using the notation of Theorem \ref{tr},  the analytic function $\lambda \to \omega^T_{P_0} (E(\lambda, \Phi)^Z $ is equal, as a distribution on $i\a_P^*$, to the sum:\\
(a) on $Q\in \cP_{st}$ \\
(b) on $s \in W( Q\vert P)$\\
(c) on $\mu \in \cE_Q(\Phi_Q^w)$ of:
$$C_Q [(p_Q(M(s^{-1}, \lambda) \phi, \Phi^w_{Q, 0, \mu}  )^{Z} \circ s^{-1} )( \partial) (-1)^{\beta^\Lambda _{Q^s}} (\psi^\Lambda_{Q^s,  T_{Q^s}+Z })^{\hat{}}\>]( \lambda
-s^{-1} \mu),$$
where $Q^s= sQs^{-1}$ and $ \psi^\Lambda_{Q^s,  T_{Q^s+Z}} $ is the characteristic function of the translate of $C^\Lambda_{Q^s} $, $C^\Lambda_{Q^s} -T_{Q^s} -Z$ 
and the upper index $\hat{}$ indicates that we take its Fourier transform. 
\end{prop}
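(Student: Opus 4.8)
The plan is to evaluate the sum $\omega^T_{P_0}(E(\lambda),\Phi)^Z=\sum_{Q\in\cP_{st}}r^T_Q(E_P(.,\phi,\lambda)^w_Q,\Phi^w_Q)^Z$ by inserting in turn the explicit description of the weak constant term of the Eisenstein series and the explicit formula (\ref{rtp=}) for $r^T_Q$, and then recognizing the exponential--polynomial expressions that appear as Fourier transforms of characteristic functions of translated cones, exactly as in \cite{D2}, Proposition 2. First I would substitute (\ref{eqw}) (which itself comes from the full constant term formula (\ref{eq})), $E_P(.,\phi,\lambda)^w_Q=\sum_{s\in W(P,Q)}E^Q_{Q_s}(.,M(s,\lambda)\phi,s\lambda)$. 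By (\ref{epq}) and the characterization of $E^Q_{Q_s}$, the summand attached to $s$ is the twist $(M(s,\lambda)\phi)_{s\lambda}$ of an element of $\cA^2_Q(G)$ when $sM_Ps^{-1}=M_Q$, i.e. $s\in W(P|Q)$, and lies in $\cA^{temp,c}_Q(G)$ otherwise (the latter by the analogue of (\ref{etempc}) for $M_Q$, via (\ref{epq}), proved in the same way). Since, by the convention following (\ref{polynomial}), $r^T_Q(\cdot,\Psi)^Z$ only sees the $\cA^2_Q(G)$-component of its first argument, and since $r^T_Q$ extends by linearity to linear combinations of twisted forms, this reduces the quantity to the double sum $\sum_{Q\in\cP_{st}}\sum_{s\in W(P|Q)}r^T_Q((M(s,\lambda)\phi)_{s\lambda},\Phi^w_Q)^Z$; note $W(P|Q)=\vid$ unless $M_Q$ is $W$-conjugate to $M_P$, so in effect only such $Q$ contribute.

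Next I would apply (\ref{rtp=}) to each summand, with first argument $M(s,\lambda)\phi$, twisting parameter $s\lambda$, and second argument $\Phi^w_Q$; this produces the third sum, over $\mu\in\cE_Q(\Phi^w_Q)$, and writes each summand, up to the constant $C_Q$, as $e^{(s\lambda-\mu)(T_Q+Z)}$ times the value at $s\lambda-\mu$ of the result of applying to the rational function $1/\theta_Q$ the constant-coefficient differential operator attached to the polynomial $p_Q(M(s,\lambda)\phi,\Phi^w_{Q,0,\mu})^Z$. The key step is then the Fourier-analytic identification: for a strictly $P$-dominant $\Lambda$, the tempered distribution $\nu\mapsto e^{\nu(T_Q+Z)}/\theta_Q(\nu)$ is, up to the sign $(-1)^{\beta^\Lambda_Q}$ coming from the coroots $\check\alpha\in\check\Delta_Q$ with $\Lambda(\check\alpha)<0$, the Fourier transform of the characteristic function $\psi^\Lambda_{Q,T_Q+Z}$ of the translated cone $C^\Lambda_Q-T_Q-Z$, and applying the differential operator $p_Q(\cdots)^Z(\partial)$ corresponds on the geometric side to multiplication by that polynomial — this is the content of \cite{D2}, Proposition 2. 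Because by Theorem \ref{tr}(i) the function $\lambda\mapsto\omega^T_{P_0}(E(\lambda),\Phi)^Z$ is analytic, whereas each such cone Fourier transform is only a tempered distribution whose singularities cancel in the sum, the identity must be asserted — and is proved — at the level of distributions on $i\a_P^*$.

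Finally I would reindex the double sum: writing $s=\sigma^{-1}$ with $\sigma\in W(Q|P)=W(P|Q)^{-1}$ turns $\sum_Q\sum_{s\in W(P|Q)}$ into $\sum_Q\sum_{\sigma\in W(Q|P)}$ with $M(s,\lambda)$ replaced by $M(\sigma^{-1},\lambda)$; and transporting by $\sigma$ (a Weyl element conjugating $M_Q$ onto $M_P$) carries the parabolic $Q$ to $Q^\sigma=\sigma Q\sigma^{-1}$ — which has Levi $M_P$, so that $\a^G_{Q^\sigma}=\a^G_P$ and the coroots $\check\Delta_{Q^\sigma}$ on which $\Lambda$ is evaluated live in $\a_{M_P}$ — the cone $C^\Lambda_Q$ to $C^\Lambda_{Q^\sigma}$, the linear form $\theta_Q$ to $\theta_{Q^\sigma}$, the translate $T_Q$ to $T_{Q^\sigma}$, the polynomial $p_Q(\cdots)$ to $p_Q(\cdots)\circ\sigma^{-1}$, the sign to $(-1)^{\beta^\Lambda_{Q^\sigma}}$, and the evaluation point $s\lambda-\mu$ to $\lambda-\sigma^{-1}\mu$. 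Collecting the pieces gives exactly the triple sum (a)--(b)--(c) of the statement. I expect the main obstacle to be precisely this last bookkeeping: carrying the $\sigma$-conjugation consistently through every ingredient — the signs, the $\Lambda$-dependence and the translates of the cones, the composition $p_Q(\cdots)\circ\sigma^{-1}$, and which Weyl element acts on the evaluation point — exactly as in \cite{D2}, Proposition 2; the only genuinely analytic input beyond (\ref{eq}), (\ref{rtp=}) and the elementary Fourier transform of the characteristic function of a cone is the cancellation of singularities, already guaranteed by Theorem \ref{tr}(i).
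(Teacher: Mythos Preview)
Your proposal is correct and follows essentially the same route as the paper: expand the weak constant term via (\ref{eqw}) (coming from (\ref{eq})), retain only the $s\in W(P\vert Q)$ terms, plug in (\ref{rtp=}), identify the resulting expressions with Fourier transforms of translated cones as in \cite{D2}, and reindex. The one point the paper makes more explicit is the regularization: rather than asserting directly that $e^{\nu(T_Q+Z)}/\theta_Q(\nu)$ ``is'' the signed Fourier transform of $\psi^\Lambda_{Q,T_Q+Z}$, the paper views the analytic function $\omega(\lambda)$ as the distributional limit $\lim_{t\to 0^+}\omega(\lambda+t\Lambda)$ and applies \cite{D2}, Lemma 11, term by term to this shifted sum --- this is exactly what gives meaning to the non-integrable $1/\theta_Q$ on the imaginary axis and produces the $\Lambda$-dependent signs and cones. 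You have the right idea (you invoke \cite{D2}, Proposition 2, and Theorem \ref{tr}(i) for the cancellation of singularities), but it would be cleaner to state this limit step explicitly rather than fold it into the citation.
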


\begin{proof}
First $ \omega(\lambda):= \omega^T_{P_0} ( E( \lambda), \Phi)$ is analytic on   $\i\a_P^*$  from Theorem \ref{tr} (i). 
Then, $\omega( \lambda)$ is the limit when $t$  to $0^+$ of $\omega( \lambda+t\Lambda)$ in the sense of distributions. 
Then one uses \cite{D2}, Lemma 11, for each term of the sum defining $\omega(\lambda + t\Lambda)$. The Lemma follows. 
\end{proof} 

%$$M( \lambda, \phi, \Phi, T)_R^Z + M( \lambda, \phi, \Phi, T)_{R'} ^Z  =\sum_{\eta\in \cE_{Q_\beta} ( \Phi_{Q_\beta}}  \int_K  P( \partial) [M_{M_\beta } ( \lambda,  M( w, \lambda) \phi (.k), \Phi_{\eta}  (. k)  ),T) ]^{X + Z }e  e^{< ., T_Q>}  \theta_{Q_\beta}    dk  $$ 

The following theorem is the main result of this article. 

\begin{theo} 
The image of the map $\overline{\cE}$ of Theorem \ref{lapid} is equal to $L^2(G(F) \backslash G( \A))$.  
\end{theo}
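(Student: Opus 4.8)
The plan is to argue by contradiction. Suppose the image $V:=\overline{\cE}(\cL)$ is a proper closed subspace of $L^2(G(F)\backslash G(\A))$. Since $V$ is $G(\A)$-stable ($\overline{\cE}$ being equivariant for the right regular representation), so is $V^\perp$; more concretely, the dense span of the wave packets $E_a$ of Eisenstein series is stable under $K$, under $A_G^\infty$, under $U(\g_\infty)$ and under right convolution by characteristic functions of compact open subgroups of $G(\A_f)$, Eisenstein wave packets going to Eisenstein wave packets under these operations, so $V^\perp$ is stable under the same operations. First I would pick $0\neq\Phi\in V^\perp$ and, after convolving by suitable idempotents of the Hecke algebra and projecting onto a single infinitesimal character of $Z(\g_\infty)$ and a single unitary character of $A_G^\infty$, arrange that $\Phi$ is $K$-finite, $Z(\g_\infty)$-finite and invariant under a compact open subgroup of $G(\A_f)$, still nonzero and orthogonal to $V$. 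Then $\Phi$ is a $K$-finite, hence smooth, vector of an admissible subrepresentation of $L^2([G])$, with all its $U(\g_\infty)$-derivatives again in $L^2([G])\subset L^2([G],\sigma_{[G]}^{-d}\,dx)$; thus $\Phi\in\cA(G)$, and by Lemma \ref{key} $\Phi\in\cA^{temp}(G)$.

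By Lemma \ref{tempexp}(ii) it suffices to prove that $\Phi^w_Q\in\cA^{temp,c}_Q(G)$ for every standard parabolic subgroup $Q$ of $G$; granting this, that lemma forces $\Phi=0$, the desired contradiction. Equivalently I must show $p_Q(\psi,\Phi^w_Q)=0$ for all $\psi\in\cA^2_Q(G)$ and all standard $Q$. So fix $Q$ and $\psi\in\cA^2_Q(G)$ and apply Theorem \ref{tr} and Proposition \ref{cone} with the role of $P$ there played by $Q$ and $E(\lambda)=E_Q(\cdot,\psi,\lambda)$. For $a\in C_c^\infty(i\a_Q^*)$ the wave packet $E_a=\int_{i\a_Q^*}a(\lambda)E(\lambda)\,d\lambda$ lies in $\cC([G])$ by Proposition \ref{wps} and in the image of $\overline{\cE}$ by the Lemma following Theorem \ref{lapid}. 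Hence, by Fubini and dominated convergence,
$$\int_{i\a_Q^*}a(\lambda)\,\Omega^{T}_{P_0}(E(\lambda),\Phi)^Z\,d\lambda=\int_{\CC_T^G}E_a^Z(x)\,\overline{\Phi}^Z(x)\,dx\longrightarrow\int_{[G]^1}E_a^Z(x)\,\overline{\Phi}^Z(x)\,dx=T_{\Phi,Z}(a)$$
as $T\xrightarrow{\delta}\infty$, and $T_{\Phi,Z}=0$ because $E_a\in V$, $\Phi\perp V$, and the fixed central character identifies this pairing on $[G]^1$ with an inner product in $L^2([G])$. On the other hand Theorem \ref{tr}(ii) gives $\Delta^{T}_{P_0}(E(\lambda),\Phi)^Z\to 0$ uniformly for $\lambda$ in compact subsets of $i\a_Q^*$. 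Therefore $\omega^{T}_{P_0}(E(\lambda),\Phi)^Z\to 0$ as a distribution on $i\a_Q^*$ when $T\xrightarrow{\delta}\infty$.

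Now I would substitute the explicit formula of Proposition \ref{cone} for $\omega^{T}_{P_0}(E(\lambda),\Phi)^Z$. Tested against $a\in C_c^\infty(i\a_Q^*)$, the summand indexed by $(R,s,\mu)$ becomes, after transposing the constant-coefficient differential operator and undoing the Fourier transform, an integral $\pm\int_{C^\Lambda_{R^s}+T_{R^s}+Z}\widehat{g}(Y)\,dY$ of a Schwartz function $g$ built from $a$ and from the exponential polynomial $p_{R}(M(s^{-1},\lambda)\psi,\Phi^w_{R,0,\mu})$. As $T\xrightarrow{\delta}\infty$ such an integral tends to $0$ unless the translated cone $C^\Lambda_{R^s}+T_{R^s}$ exhausts $\a_Q^G$, which — this is precisely what the signs $(-1)^{\beta^\Lambda_{R^s}}$ keep track of — happens only for the single term in which $R^s$ is the parabolic $\bar Q$ opposite to $Q$; every other summand dies in the limit. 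The vanishing of $\omega^{T}_{P_0}$ then forces this main term to be zero, and once one removes the nonzero constant, the Fourier transform of the now-full cone and the shift by $s^{-1}\mu$, it reads $p_Q(M(s^{-1},\lambda)\psi,\Phi^w_Q)=0$ as an exponential polynomial, where $s\in W(Q\vert Q)$ carries $Q$ to $\bar Q$. Since $M(s^{-1},\lambda)\colon\cA^2_Q(G)\to\cA^2_Q(G)$ is an isomorphism for generic $\lambda$, letting $\psi$ range over $\cA^2_Q(G)$ gives $p_Q(\psi',\Phi^w_Q)=0$ for all $\psi'\in\cA^2_Q(G)$, i.e. $\Phi^w_Q\in\cA^{temp,c}_Q(G)$. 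As $Q$ was arbitrary, Lemma \ref{tempexp}(ii) yields $\Phi=0$, contradicting $\Phi\neq0$; hence the image of $\overline{\cE}$ is all of $L^2(G(F)\backslash G(\A))$.

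The step I expect to be the main obstacle is the third paragraph: isolating cleanly the genuine main term among the oscillatory summands in the $T\to\infty$ limit of Proposition \ref{cone}, that is, carrying out the Fourier-analysis-of-cones bookkeeping and checking that the surviving term is a nonzero multiple of exactly the pairing $p_Q(\cdot,\Phi^w_Q)$ one wants to annihilate. This is the adelic counterpart of the end of the proofs of Proposition 1 and Proposition 2 of \cite{D2}, and the fact that these limits can be controlled at all rests on the uniform estimates assembled in Theorem \ref{tr}, themselves obtained through the truncation on compact sets of \cite{Alt} and the partitions of \cite{AtrI}.
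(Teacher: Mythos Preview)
Your overall strategy---produce a nonzero tempered automorphic form $\Phi$ orthogonal to all wave packets, use the asymptotics of the truncated inner product to force $\Phi^w_Q\in\cA^{temp,c}_Q(G)$ for every standard $Q$, and conclude by Lemma~\ref{tempexp}(ii)---is exactly the paper's. Two of your steps, however, are not as routine as you suggest.

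\textbf{Construction of $\Phi$.} You cannot simply ``project onto a single infinitesimal character of $Z(\g_\infty)$ and a single unitary character of $A_G^\infty$'': these are not bounded projections on $L^2([G])$ (the $Z(\g_\infty)$-spectrum on the continuous part is continuous), and a nonzero closed $G(\A)$-invariant subspace of $L^2([G])$ need not contain any $Z(\g_\infty)$-finite vector at all. The paper circumvents this by disintegrating $V^\perp=\int_{\hat G}\cH_\pi\,d\mu(\pi)$ and disintegrating the evaluation functional $\xi$ accordingly; for $\mu$-almost every $\pi$ the generalized matrix coefficient $g\mapsto\langle\xi_\pi,\pi(g)v_\pi\rangle$ is then automatically $Z(\g_\infty)$-finite (because $\pi$ is irreducible), transforms by a unitary character of $A_G^\infty$, is tempered by the Bernstein estimate recalled just before the construction, and---after a Fubini/measurability/separability argument---is orthogonal to every $E_a$. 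This disintegration is the substance of the lemma the paper proves right after stating the theorem, and your shortcut does not replace it.

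\textbf{The surviving term.} Your identification of the limiting main term is off. With $\Lambda$ strictly $Q$-dominant, the translated cone $C^\Lambda_{R^s}-nT_{R^s}$ exhausts $\a_Q^G$ as $n\to\infty$ precisely when $R^s=Q$, not $\bar Q$; since $R$ is standard and conjugate to the standard $Q$, this forces $R=Q$ and $s=1$. The limiting identity therefore reads directly
\[
0\;=\;C_Q\sum_{\mu\in\cE^w_Q(\Phi)}\bigl[(p_Q(\psi,\Phi_{Q,0,\mu})^{0})(\partial)\,a_1\bigr](\mu_{\vert\a^G_Q}),
\]
with no intertwining operator present. Varying $a_1$ separates the exponents $\mu$, and then varying $\psi\in\cA^2_Q(G)$ gives $\Phi^w_Q\in\cA^{temp,c}_Q(G)$ at once, without any appeal to invertibility of $M(s^{-1},\lambda)$. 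You were right to flag this paragraph as the crux; the actual bookkeeping is simpler than you feared, but the parabolic you singled out is the wrong one.
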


We start with a preliminary remark. 
Let us recall  some result of Bernstein in \cite{B}. We use the notation and terminology of l.c. 
The space $[G]$ is  of polynomial growth whose rank $rk([G])$ is equal to the split rank of $G$ (cf. l.c., section 4.4,  Example 1).  
Using the definition of  a space of polynomial growth in l.c. p.689 and what follows this definition, one gets that for $d > rk([G])$, $(1+ \sigma_{[G]})^d$ is a summable weight. 
Moreover the  automorphic forms which  may contribute  to the spectrum  (see below for a precise meaning) are in some 
$L^2( X, (1+\sigma_{[G]})^{-d} dx)$, for some $d>0$, as well as their derivatives. Hence they are tempered, by Proposition \ref{key}.\\

\begin{lem}
If the image of $\overline{\cE}$ is not equal to $L^2(G(F) \backslash G(\A))$, there  exists   a non zero tempered  automorphic form $\Phi$, 
transforming under  a unitary character $\nu_G  $  of $\a_G$ and orthogonal to all the wave packets $E_a$  of Proposition \ref{wps},  
when $P$, $\phi $ and $a$ varies. 
\end{lem}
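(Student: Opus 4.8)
The plan is to use an orthogonal-complement argument combined with the spectral decomposition machinery that has been set up. Suppose the image of $\overline{\cE}$ is a proper closed subspace of $L^2(G(F)\backslash G(\A))$. Then its orthogonal complement $V$ is a nonzero closed $G(\A)$-invariant subspace, on which the center $Z(\g_\infty)$ and the center of the relevant Hecke algebra act. First I would decompose $L^2(G(F)\backslash G(\A))$ along the central character $\nu_G$ of $\a_G$: since $A_G^\infty$ acts unitarily and $V$ is $G(\A)$-invariant, $V$ contains a nonzero vector lying in one of the $\nu_G$-isotypic pieces, so we may assume from the start that $V$ consists of functions transforming under a fixed unitary character $\nu_G$ of $\a_G$.

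Next I would extract from $V$ an actual automorphic form rather than merely an $L^2$-function. The standard device is to use that $V$, being $G(\A)$-invariant and nonzero, contains a nonzero $K$-finite vector which is $Z(\g_\infty)$-finite (apply a suitable idempotent of the Hecke algebra at the finite places together with the action of the center of the archimedean enveloping algebra, using that $Z(\g_\infty)$ acts on $L^2$ with, after cutting by a $K$-type, finitely many relevant infinitesimal characters on each bounded piece — or more directly, spectral projection onto a small interval of the Casimir). Such a vector $\Phi$ is annihilated by a cofinite-dimensional ideal of $Z(\g_\infty)$, is $K$-finite, and being in $L^2$ with all its derivatives (these derivatives stay in $L^2$ because they lie in the same invariant subspace after acting by $U(\g_\infty)$, and $L^2\subset L^2([G],\sigma_{[G]}^{-d}dx)$ trivially for $d\geq 0$), it follows from Lemma \ref{key} that $\Phi$ is tempered. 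Actually the cleaner route, matching the ``preliminary remark'' just above, is: $\Phi$ and its derivatives lie in some $L^2([G],(1+\sigma_{[G]})^{-d}dx)$ with $d=0$, hence in particular for any $d>0$, so Lemma \ref{key} (Proposition \ref{key}) gives temperedness directly.

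Finally I would check that $\Phi$ is orthogonal to every wave packet $E_a$ of Proposition \ref{wps}. This is where the previous lemma of the excerpt enters: each $E_a$ lies in the image of $\cE$ (that is the Lemma following Theorem \ref{lapid}), hence $E_a$ is orthogonal to $V\ni\Phi$, i.e. $\int_{[G]^1}E_a(x)^Z\overline{\Phi}^Z(x)\,dx$ vanishes for all $Z\in\a_G$ once one matches up the central characters — here one uses that if the $\nu_G$ of $\Phi$ is not the central character carried by the packet $E_a$ the integral vanishes trivially, and otherwise it vanishes by the orthogonality to the image of $\overline{\cE}$. That gives exactly the statement: a nonzero tempered automorphic form $\Phi$, transforming under the unitary character $\nu_G$, orthogonal to all $E_a$ as $P$, $\phi$, $a$ vary.

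The main obstacle is the passage from ``nonzero closed invariant $L^2$-subspace'' to ``nonzero tempered automorphic form'': one must produce a vector that is simultaneously $K$-finite, $Z(\g_\infty)$-finite, and of a single $\a_G$-character, and then invoke Lemma \ref{key} to upgrade $L^2$-membership to temperedness. Everything else is bookkeeping: the orthogonality to wave packets is immediate from the earlier lemma identifying $E_a$ with an element of the image of $\cE$, and the reduction to a fixed $\nu_G$ is a routine direct-integral decomposition over the unitary dual of $A_G^\infty$.
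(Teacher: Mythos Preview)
Your approach has a genuine gap at the crucial step: you assert that the orthogonal complement $V$ ``contains a nonzero $K$-finite vector which is $Z(\g_\infty)$-finite,'' but this is exactly what is not automatic. $K$-finiteness is fine by Peter--Weyl, but if $V$ has purely continuous $Z(\g_\infty)$-spectrum (which is the expected situation here --- the discrete spectrum is already in the image of $\overline{\cE}$ via $P=G$), then $V$ contains no nonzero $Z(\g_\infty)$-finite vector at all. Your suggested fixes (``spectral projection onto a small interval of the Casimir'') do not produce $Z(\g_\infty)$-finite vectors, and the parenthetical about ``finitely many relevant infinitesimal characters on each bounded piece'' is simply false for continuous spectrum. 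Consequently the appeal to Lemma~\ref{key}, which needs $\Phi$ and its derivatives in $L^2([G],\sigma_{[G]}^{-d}dx)$, never gets off the ground because no candidate $\Phi\in V$ with the right finiteness properties has been produced.

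The paper's argument is genuinely different and is designed precisely to bypass this obstruction. It disintegrates $\cH=V$ as a direct integral $\int_{\hat G}\cH_\pi\,d\mu(\pi)$ over the unitary dual, disintegrates the evaluation functional $\xi$ accordingly, and then for a fixed $K$-finite $v\in\cH^\infty$ looks at the fiberwise matrix coefficients $g\mapsto\langle\xi_\pi,\pi(g)v_\pi\rangle$. These are automatically $K$-finite and $Z(\g_\infty)$-finite (because $\pi$ is irreducible), but they are \emph{generalized} eigenfunctions: they live in a weighted space $L^2([G],(1+\sigma_{[G]})^{-d}dx)$, not in $L^2$ itself --- this is exactly the content of the ``preliminary remark'' invoking Bernstein's theory, and is why Lemma~\ref{key} is stated for the weighted space. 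A measurability/Fubini argument with the sets $X_{p,q}$ then locates a single $\pi_0$ for which the matrix coefficient is nonzero, tempered, and orthogonal to every $E_a$. The $\a_G$-character comes for free since $\pi_0$ is irreducible. So the $\Phi$ the paper produces is typically \emph{not} in $L^2$, which is why your $L^2$-based extraction cannot work.
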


\begin{proof} 
The proof is similar to \cite{CD}, Lemma 11. 
%using the  weak inequality a) of Proposition \ref{abc},   (\ref{sixi})   which ensures the convergence of the integrals and the fact that $G(\A)$ is separable. Let us give  a detailed proof.\\ Let us assume that  the Theorem is false and let us find a contradiction. 
Let $\cH$ be the orthogonal to the image of $\cE$,  which is assumed to be non zero. 
One considers the decomposition of this representation of $G(\A)$ into an Hilbert integral of multiple of irreducible representations: 
$$\cH= \int_{\hat{G}}\cH_\pi d\mu( \pi).$$ 
The restriction $\xi$ of the Dirac measure at  the neutral element  to   the space $\cH^\infty$ of $C^\infty$ vectors, desintegrates: 
$$\xi= \int_{\hat{G}} \xi_\pi  d\mu( \pi), $$ 
where $\xi_\pi  \in (\cH_\pi ^{-\infty})^{G(F)}$, i.e. is a $G(F)$-invariant distribution vector on $\cH_\pi$.\\ 
Let  
$$v=\int_{\hat{G}}v_\pi d \mu(\pi) \in \cH^\infty.$$ 
We assume that it is $K$-finite and  non zero.\\
Let  $(g_n) $    be  a dense sequence in $G(\A)$. For $p,q, n \in \N$, let: 
$$X_{p,q,n} = \{\pi \in \hat{G}\vert  \> \vert < \xi_\pi, \pi(g_n)v_\pi > \vert  \leq p\Xi(g_n) ( 1+\sigma\gg (g_n))^{q} \} $$ 
For all $g \in G(\A)$, the map $\pi \mapsto < \xi_\pi ,  \pi(g) v_\pi> $ is $\mu$-measurable. 
Hence all $X_{p,q,n}$ are measurable as well as $X_{p,q} = \cap_{n \in \N}X_{p,q,n}$. 
Moreover, from our preliminary remark, just after the statement of the Theorem, $\cup_{p,q \in \N}X_{p,q} $ is equal to $\hat{G}$ up to a set of $\mu$-measure 0. 
Let $X_{p,q}^0$ be the  set  of elements $\pi$ of $X_{p,q}$ of $\pi$ such that $g \mapsto < \xi_\pi , \pi(g) v_\pi>$ is non identically zero.  
As $v$ is non zero, one can find $p,q$ such that the set  $X_{p,q}^0$ is of non zero measure. 
Let $\chi$ be the characteristic function of $X^0_{p,q}$. Then one has for any $ \theta \in L^{\infty} (\hat{G}, \mu)$, going back to the definition:
% AA dire plus. 
$$ f_\theta: = \int_{ \hat{G}}  \chi(\pi) \theta(\pi) v_\pi d\mu(\pi) \in \cH^\infty.$$ 
Hence by using the desintegration of $\xi$, viewing $f_\theta$ as a function on $G(\A)$,  one has: 
$$f_\theta (g) = \int_{ \hat{G}} \chi( \pi) \theta  ( \pi) < \xi_\pi , \pi(g) v_\pi>  d \mu(\pi), g \in G(\A). $$ 
Let us show that the map $(\pi, g) \mapsto  < \xi_\pi, \pi(g) v_\pi> $ is measurable. 
Let $g = g_\infty g_f$ where $g \in G( \A_\infty)$ and $g_f \in G(\A_f)$. As $v $ is smooth the map is locally constant in $g_f$. 
One easily reduces to $g_f=1$ and look to the dependence on $( \pi,g_\infty)$ only. Then one uses the argument given in \cite{CD}, p. 96 which uses step functions.\\ 
Using  (\ref{sixi}), one can apply  Fubini's theorem to  
$$ \int_{[G]} E_a(x)f_\theta(x) dx = \int_{X^0_{p, q}} \theta(\pi) \int_{[G] }E_a(x)  \overline{< \xi_\pi , \pi(x) v_\pi>}  dx d \mu( \pi). $$ 
This has to be zero for all $\theta$. Hence for almost all $\pi$ in $X^0_{p,q}$ one has:
$$ \int_{[G] } E_a (x)  < \xi, \pi(x) v_\pi> dx= 0$$ 
for a given $E_a$. Using  a separability argument, one finds an element  $\pi_0$ of $X^0_{p, q} $ such that it is true for all $E_a$. One takes $\Phi= < \xi, \pi_0(x) v_\pi>$. 
\end{proof} 
Let  $ a = a_1 \otimes a_2$ where $a_1  \in C_c^\infty( (i\a_P^G)^*)$ and  $a_2  \in C_c^\infty( i\a^*_G)$ ). Let $\nu_G\in i\a_G^* $ which describes 
the  action of $\a_G$ on $\pi_0$. Then, using Fourier inversion formula for $i\a_G^*$, one has: 
$$\int_{ G(F) \backslash G( \A)}   E_a (x) \overline{ \Phi(x)} dx = a_2(\nu_G ) \int_{ G(F) \backslash G( \A)^1} E_{a_1}  (x)  \overline{ \Phi(x)} dx $$ 
where $E_{a_1}= \int_{i\a_P^{G,* } }a_1( \lambda )  E_P(x, \phi, \lambda) d \lambda$.
We want to compute $$ I= \int_{ G(F) \backslash G( \A)^1} E_{a_1}  (x)   \overline{ \Phi(x)} dx $$ using the preceding theorem. 

\begin{lem} 
$$I= C_P  \sum_{  \mu \in \cE^w_P(\Phi)  } [(p_P (\phi  , \Phi_{P, 0, \mu} )^{0})  ( \partial)  a_1 )](\mu_{\vert \a^G_P}).$$
\end{lem}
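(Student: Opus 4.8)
The plan is to compute $I=\int_{G(F)\backslash G(\A)^1}E_{a_1}(x)\overline{\Phi(x)}\,dx$ by passing through the truncated inner product of Theorem~\ref{tr} and taking the limit $T\xrightarrow{\delta}\infty$. First I would write $E_{a_1}(x)=\int_{i\a_P^{G,*}}a_1(\lambda)E(x,\lambda)\,d\lambda$ with $E(x,\lambda)=E_P(x,\phi,\lambda)$ and, since $\Phi$ transforms under $\nu_G$ and we are integrating over $[G]^1$, reduce everything to $Z=0$ (more precisely, work with $E(x,\lambda)^0$ and $\overline\Phi^0$). Using the partition of $[G]^1$ into the compact pieces $\CC_T^Q$ (see (\ref{partition})) and the fact that $\CC_T^G$ exhausts $[G]^1$ as $T\xrightarrow{\delta}\infty$, together with the Schwartz property of $E_{a_1}$ (Proposition~\ref{wps}) and the tempered bound on $\Phi$, one has
$$I=\lim_{T\xrightarrow{\delta}\infty}\int_{\CC_T^G}E_{a_1}(x)\overline{\Phi(x)}\,dx=\lim_{T\xrightarrow{\delta}\infty}\int_{i\a_P^{G,*}}a_1(\lambda)\,\Omega_{P_0}^T(E(\lambda),\Phi)^0\,d\lambda,$$
the interchange of $\int_{\CC_T^G}$ and $\int_{i\a_P^{G,*}}$ being justified by Fubini on the compact set $\CC_T^G$.

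Next I would invoke Theorem~\ref{tr}: writing $\Omega_{P_0}^T(E(\lambda),\Phi)^0=\Delta_{P_0}^T(E(\lambda),\Phi)^0+\omega_{P_0}^T(E(\lambda),\Phi)^0$, the $\Delta$-term is $O(e^{-\ep\|T\|})$ uniformly for $\lambda$ in the (compact) support of $a_1$, hence contributes $0$ in the limit. So $I=\lim_{T\xrightarrow{\delta}\infty}\int a_1(\lambda)\,\omega_{P_0}^T(E(\lambda),\Phi)^0\,d\lambda$, where $\omega_{P_0}^T(E(\lambda),\Phi)^0=\sum_{Q\in\cP_{st}}r_Q^T(E(\lambda)_Q^w,\Phi_Q^w)^0$. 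Now I would use the explicit formula (\ref{rtp=}) for each $r_Q^T$: each summand is, up to the factor $C_Q$, a finite sum over $\mu\in\cE_Q(\Phi_Q^w)$ of $e^{(\cdots)(T_Q)}$ times a polynomial-in-$\lambda$ expression coming from $[p(\phi,\ldots)^0(\partial)\theta_Q]$. Here one must feed in the expression (\ref{eqw}) for $E(\lambda)_Q^w=\sum_{s\in W(P,Q)}E_{Q_s}^Q(\cdot,M(s,\lambda)\phi,\lambda)$, so that $\omega_{P_0}^T(E(\lambda),\Phi)^0$ becomes (via Proposition~\ref{cone}, or directly via (\ref{eq}),(\ref{rtp=})) a sum over $Q$, over $s\in W(Q\mid P)$, and over $\mu\in\cE_Q(\Phi_Q^w)$, of terms supported near $\lambda=s^{-1}\mu_{|\a_P^G}$.

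The key point is then that after multiplying by $a_1$ and integrating over $i\a_P^{G,*}$, the limit $T\xrightarrow{\delta}\infty$ kills every term for which the exponential factor $e^{(\lambda-s^{-1}\mu)(T_Q)}$ genuinely oscillates/decays — i.e. all $Q\neq G$ — because the Fourier transform of a $C_c^\infty$ function is rapidly decreasing (the same mechanism as in the Lemma preceding Proposition~\ref{cone}, and as in \cite{D2}). Only the term $Q=G$, $s=1$ survives, for which $\a_G^G=0$, $\theta_G=1$, $T_G$ is irrelevant, and $\cE_G(\Phi_G^w)$ is replaced — after restricting along $P$ and matching constant terms — by $\cE_P^w(\Phi)$; the operator $p(\phi,\ldots)^0(\partial)\theta_Q$ collapses to $p_P(\phi,\Phi_{P,0,\mu})^0(\partial)$ acting on $a_1$, evaluated at $\mu_{|\a_P^G}$, with the overall constant $C_P$. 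This yields
$$I=C_P\sum_{\mu\in\cE_P^w(\Phi)}\bigl[\bigl(p_P(\phi,\Phi_{P,0,\mu})^0\bigr)(\partial)\,a_1\bigr](\mu_{|\a_P^G}).$$
I expect the main obstacle to be the bookkeeping in this last step: carefully identifying which $(Q,s,\mu)$ contribute a non-oscillating term, checking that for $Q=G$ the relevant exponents are exactly those of $\Phi_P^w$ (using Lemma~\ref{transw} and (\ref{eqw})), and verifying that the differential operator and the constant match the asserted form — rather than any genuinely hard analytic estimate, since all the analytic inputs (Theorem~\ref{tr}, Proposition~\ref{wps}, rapid decay of Fourier transforms) are already in place.
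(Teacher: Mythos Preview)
Your setup is right: one writes $I$ as the limit of $\int a_1(\lambda)\,\Omega^{T}_{P_0}(E(\lambda),\Phi)^0\,d\lambda$, uses Theorem~\ref{tr} to replace $\Omega$ by $\omega$, and then analyzes the limit via Proposition~\ref{cone}. The gap is in the identification of the surviving term. You assert that ``only $Q=G$, $s=1$ survives''; but for $P\neq G$ there is no element of $W(G\vert P)$ at all (membership requires $M_Q$ conjugate to $M_P$), and in the raw decomposition $\omega^T=\sum_Q r_Q^T$ the summand $r_G^T(E(\lambda),\Phi)$ vanishes anyway because $E(\lambda)\in\cA^{temp,c}(G)$ by (\ref{etempc}). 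There is no legitimate passage from ``$\cE_G(\Phi_G^w)$'' to $\cE_P^w(\Phi)$, nor from $C_G$, $\theta_G$ to $C_P$ and the operator $p_P(\phi,\Phi_{P,0,\mu})^0(\partial)$.

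The correct bookkeeping is this. In Proposition~\ref{cone} the terms are indexed by $(Q,s)$ with $s\in W(Q\vert P)$, so that $Q^s=sQs^{-1}$ is a (not necessarily standard) parabolic with Levi $M_P$, and the distribution attached to each term is the Fourier transform of the characteristic function of the translated cone $C^\Lambda_{Q^s}-nT_{Q^s}$ in $\a_P^G$. Along $T=nT_0$ with $T_0$ strictly dominant, this characteristic function tends to $0$ in the tempered sense unless $Q^s=P$, in which case the cone expands to fill $\a_P^G$ and its Fourier transform tends to a Dirac mass. Since $Q$ is standard and conjugate to $P$, $Q^s=P$ forces $Q=P$ and $s=1$, and that single term already carries the constant $C_P$, the sum over $\mu\in\cE_P^w(\Phi)$, and the differential operator $p_P(\phi,\Phi_{P,0,\mu})^0(\partial)$ acting on $a_1$ at $\mu_{\vert\a_P^G}$. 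Your ``oscillation/rapid decay'' heuristic is exactly what proves the earlier Lemma that $\int a\,\omega^T\to 0$ when $a$ is supported in $\cH^c$; but for general $a_1$ the integrand has poles along the hyperplanes excluded from $\cH^c$, and it is precisely these singular contributions (captured distributionally by Proposition~\ref{cone}) that survive the limit, not the $Q=G$ term.
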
 

\begin{proof} 
We  can compute $I$ as limit.  Using Lebesgue dominated convergence and Fubini theorems, one can write $I$ as a limit. 
Let $T $ be strictly $P_0$-dominant. Then:
$$I= lim_{n \to + \infty} \int_{i\a^{G, *}_P } a_1( \lambda ) \Omega^{nT}_{P_0} ( E(\lambda),  \Phi)^0 d\lambda.$$
From Theorem \ref{tr}, one can replace $\Omega$ by $ \omega$.\\ Then one uses Proposition \ref{cone}.  
One sees easily that unless $Q^s= P$, the characteristic function of $C^\Lambda_{Q^s} - nT_{Q^s}$ tends to $0$ in the sense of tempered distributions. 
But  in this case $Q$ is standard and conjugate to $P$. Hence $Q=P$ and $s =1$.  Using Proposition \ref{cone}, one computes easily the limit.
\end{proof}

Now we can finish the proof of the Theorem. The hypothesis on $\Phi$  above shows that the right hand side of the equality of the Lemma  is zero 
for all $P$,  $\phi$,  $a_1, a_2$.   
One concludes, by varying $a_1, a_2$ and $\phi$, that  $\Phi_{P, 0, \mu}  \in \cA_P^{temp,c} (G)$  for all $P$ and $\mu \in \cE^w_P(\Phi) $. 
Then, using Lemma \ref{tempexp} (ii),  one concludes that $\Phi= 0$.  A contradiction which finishes the proof.  
\qed

\noindent Patrick Delorme 
\\patrick.delorme@univ-amu.fr \\
Aix Marseille Univ, CNRS, Centrale Marseille, I2M, Marseille, France

\end{document}